\renewcommand{\Re}{\mathrm{Re}\,}
\renewcommand{\Im}{\mathrm{Im}\,}
\newcommand{\ud}{\,\mathrm{d}}
\newcommand{\Boh}{\mathcal{O}}
\def\red{\color{red}}
\def\blue{\color{blue}}
\newtheorem{theorem}{Theorem}[section]
\newtheorem{lemma}[theorem]{Lemma}
\newtheorem{proposition}[theorem]{Proposition}
\newtheorem{rhp}[theorem]{RH problem}
\newtheorem{Riemann-Hilbert Problem}{Definition}
\theoremstyle{definition}
\theoremstyle{remark}
\newtheorem{remark}[theorem]{Remark}
\numberwithin{equation}{section}
\begin{document}

\title{On The Eigenvalue Rigidity of the Jacobi Unitary Ensemble}

\author{Dan Dai\footnotemark[1] \ and Chenhao Lu\footnotemark[2]}

\renewcommand{\thefootnote}{\fnsymbol{footnote}}
\footnotetext[1]{Department of Mathematics, City University of Hong Kong, Tat Chee
Avenue, Kowloon, Hong Kong. E-mail: \texttt{dandai@cityu.edu.hk}}
\footnotetext[2]{Department of Mathematics, City University of Hong Kong, Tat Chee
Avenue, Kowloon, Hong Kong. E-mail: \texttt{chenhaolu3-c@my.cityu.edu.hk}}

\date{\today}

\maketitle

\begin{abstract}
In this paper, we prove an optimal global rigidity estimate for the eigenvalues of the Jacobi unitary ensemble. Our approach begins by constructing a random measure defined through the eigenvalue counting function. We then prove its convergence to a Gaussian multiplicative chaos measure, which leads to the desired rigidity result. To establish this convergence, we apply a sufficient condition from Claeys et al. \cite{CFL2021} and conduct an asymptotic analysis of the related exponential moments.
\end{abstract}

\tableofcontents

\section{Introduction and Statement of Results}

The Jacobi Unitary Ensemble (JUE) is a fundamental model in random matrix theory (RMT) that describes the statistics of a system of random particles (or real eigenvalues) confined to the interval $[-1, 1]$.  It is defined by the joint probability density function (PDF) for $N$ eigenvalues $\lambda_1, \lambda_2, \ldots, \lambda_N$ below (cf. Mehta \cite{Mehta-book}):
\begin{equation}\label{JUEdensity-classical}
\rho_N(\lambda_1,\cdots, \lambda_N) = \frac{1}{Z_N}\prod_{1\leq j<k\leq N} |\lambda_j-\lambda_k|^2 \prod_{i=1}^N (1-\lambda_i)^\alpha (1+\lambda_i)^\beta , \quad \alpha,\beta>-1,
\end{equation}
on $[-1, 1]^N$, where $Z_N$ is the normalization constant. This ensemble originated in statistics through the multivariate analysis of variance (MANOVA) for certain linear models (see \cite{Bai:Jack2010, Muirhead}), and is consequently known as the MANOVA ensemble in statistics. The JUE can be constructed in terms of random matrices. A standard approach involves two independent complex Wishart matrices. Let $a$ and $b$ be $n_1 \times N$ and $n_2 \times N$ matrices, respectively, with $n_1 \geq N$ and $n_2 \geq N$, whose $(n_1 +n_2)N$ entries are i.i.d. standard complex Gaussian random variables. Defining the $N \times N$ Wishart matrices $A =a^* a $ and $B = b^* b$, then the eigenvalues $x_1, x_2, \ldots, x_N$ of the matrix $A(A + B)^{-1}$ are distributed according to the JUE density \eqref{JUEdensity-classical} with $\lambda_j = 1-2x_j$. The parameters are given by $\alpha=n_1 - N$ and $\beta = n_2 - N$; for example, see Constantine \cite{Constantine}. In addition, Killip and Nenciu \cite{Kil:Nen2004} provided a tridiagonal matrix model whose eigenvalues are distributed according to the general $\beta$-Jacobi ensemble, which includes the JUE as a special example. In the physics literature, the JUE plays a significant role in areas such as quantum conductance and log-gas theory; for example, see \cite{Bee1997, FPJ}.


In random matrix theory, a central problem is the characterization of eigenvalue statistics. On a macroscopic scale, the limiting global density of the JUE is given by 
\begin{equation} \label{JUE-limiting-measure}
d\mu_{J}(x) = \frac{1}{\pi \sqrt{1-x^2}} dx, \qquad x \in (-1,1).
\end{equation}
In addition to JUE, other classical unitary models include the Gaussian unitary ensemble (GUE) and the circular unitary ensemble (CUE). For GUE, the limiting eigenvalue distribution is described by the well-known Wigner semicircle law:
\begin{equation} \label{eq:sc-law}
d\mu_{sc}(x) = \frac{2}{\pi} \,\sqrt{1-x^2} \, dx, \qquad x \in [-1,1].
\end{equation}
In contrast, the CUE consists of unitary matrices whose eigenvalues $e^{i\theta_j}$ lie on the unit circle. As the matrix size tends to infinity, the eigenangles $\theta_j$ follow a uniform distribution $\frac{1}{2 \pi} d\theta$ on $[0, 2 \pi]$. 

While these global eigenvalue distributions are well-established, the next natural step is to study eigenvalue properties on the microscopic scale. At this level, a key phenomenon known as {\it eigenvalue rigidity} is observed. Rigidity describes the property that individual eigenvalues lie close to their predicted classical locations, with fluctuations on a scale only slightly larger than the microscopic scale. To illustrate this in the context of CUE, consider the ordered eigenangles $0\leq \theta_1\leq \cdots \leq \theta_N < 2\pi$. Given the uniform limiting distribution of CUE, the classical location of the $j$-th eigenangle is $2 \pi j/N$. For any $\varepsilon>0$, Arguin, Belius and Bourgade \cite[Theorem 1.5]{ABB2017} show that 
\begin{equation} \label{eq:cue-optimal}
\lim_{N \to \infty}
\mathbb{P}\left( (2-\varepsilon)\frac{\log N}{N} \leq \max_{j=1, \cdots, N} \left| \theta_j-\frac{2\pi j}{N}\right| \leq (2+\varepsilon)\frac{\log N}{N} \right) = 1.
\end{equation}
This implies that the maximum fluctuation of the eigenangles $\theta_j$ around its classical location $2 \pi j/N$ is of order $O(\log N / N)$. This result is regarded as an example of {\it optimal global rigidity}, since the upper and lower bounds for the maximal fluctuation match up to the multiplicative constants $2\pm \varepsilon$, which can be made arbitrarily close by choosing $\varepsilon$ sufficiently small.

The rigidity phenomenon for the GUE is more intricate due to the different features of the bulk of the spectrum and the soft edges. The density of the semicircle law \eqref{eq:sc-law} is positive in the bulk $[-1+\delta, 1-\delta]$ and vanishes as a square root at the endpoints $\pm 1$. Let $\lambda_1 \leq \lambda_2 \leq \cdots \leq \lambda_N$  be the ordered eigenvalues of an $N \times N$ GUE matrix, and define the quantiles $\kappa_j$ by 
\begin{equation} \label{eq:gue-quantile}
\int_{-1}^{\kappa_j}d\mu_{sc}=\frac{j}{N}, \qquad j=1,\cdots, N.
\end{equation}
In the bulk, Gustavsson \cite{Gus2005} established the following central limit theorem (CLT): for any fixed $\delta > 0$ and $j\in( \delta N, (1-\delta) N)$,
\begin{equation}
2\sqrt{2} \sqrt{1-\kappa_j^2} \frac{N}{\sqrt{\log N} } (\lambda_j - \kappa_j) \xrightarrow{\ d \ } \mathcal{N}(0,1), \qquad \textrm{as } N \to \infty. 
\end{equation}
This above CLT shows that in the bulk, the fluctuation of eigenvalue $\lambda_j$ around its quantile $\kappa_j$ is typically of order $\mathcal{O}\left(\frac{\sqrt{\log N} }{N}\right)$. Beyond individual eigenvalue fluctuations, there is significant interest in characterizing the global eigenvalue fluctuation across the entire spectrum, including both the bulk and the soft edges. This problem has been investigated in a series of seminal papers \cite{Erd:Sch:Yau2009-cmp, Erd:Sch:Yau2009, Erd:Yau:Yin2012-ptrf, Erd:Yau:Yin2012}, which proved profound results valid for a broad class of random matrices, namely Wigner matrices. For GUE, which is a special example of complex Wigner matrices, the results in \cite[Thm. 2.2]{Erd:Yau:Yin2012} read: there exist constants $\alpha > \alpha' > 0$ and $C, c > 0$ such that
\begin{equation}
\mathbb{P}\left(  \max_{j=1, \ldots, N} \left\{ \sqrt{1-\kappa_j^2} \, |\lambda_ k - \kappa_j| \right\} \geq \frac{(\log N)^{\alpha \log \log N}}{N} \right) \leq C \exp \left( -c (\log N)^{\alpha' \log \log N} \right).
\end{equation}
This estimation suggests that the global eigenvalue fluctuations of a GUE matrix are highly likely to be smaller than $(\log N)^{\alpha \log \log N}/N$. This result was improved for the GUE by Claeys et al. \cite[Thm. 1.2]{CFL2021}, where an optimal bound is established: for any $\epsilon > 0$, 
\begin{equation} \label{eq:gue-optimal}
\lim_{N\to \infty} \mathbb{P}
\left[ \frac{1-\epsilon}{2} \frac{\log N}{N} \leq \max_{j=1, \ldots, N} \left\{ \sqrt{1-\kappa_j^2} \, |\lambda_ j - \kappa_j| \right\}\leq 
\frac{1+\epsilon}{2} \frac{\log N}{N} \right] = 1.
\end{equation}
This confirms that the maximal scaled fluctuation is precisely of order $O(\log N/N)$. Besides random matrix models, it is worth mentioning that similar optimal rigidity bounds have also been established for other ensembles, such as the sine $\beta$-process in \cite{Hol:Paq2018,Paq:Zei2018} and discrete $\beta$-ensembles in \cite{GH2019}.

Since GUE and CUE are among the simplest and most well-studied models in random matrix theory, it is natural to investigate whether optimal rigidity estimates analogous to \eqref{eq:cue-optimal} and \eqref{eq:gue-optimal} hold for more general unitary ensembles. In this paper, we pursue this direction by establishing an optimal eigenvalue rigidity result for the JUE. A key new feature of the JUE, compared to the GUE and CUE, is the presence of a hard edge. As seen in the limiting density \eqref{JUE-limiting-measure}, this density blows up as an inverse square root at the endpoints of the support $(-1,1)$. Therefore, obtaining a uniform global rigidity bound that remains valid all the way up to the hard edges is a non-trivial problem.


Results for the GUE and CUE indicate that, whether in the bulk or near a soft edge, the scaled eigenvalue fluctuations remain of order $O(\log N/N)$. An essential question is therefore how the presence of a hard edge affects this fluctuation scale. Our main result, Theorem~\ref{Main Theorem}, answers this question by establishing an optimal global rigidity estimate for the JUE of the same order, thereby demonstrating the robustness of this phenomenon even in the presence of hard edges. Moreover, we consider a generalization of the JUE defined by the following PDF:
\begin{equation}\label{JUEdensity}
\rho_N(\lambda_1,\cdots, \lambda_N) = \frac{1}{Z_N}\prod_{1\leq j<k\leq N} |\lambda_j-\lambda_k|^2 \prod_{i=1}^N (1-\lambda_i)^\alpha (1+\lambda_i)^\beta e^{t(\lambda_i)}, \quad \alpha,\beta>-1,
\end{equation}
where $t(x)$ is a real analytic function on $[-1, 1]$. The case $t(x) = 0$ corresponds to the classical JUE. For a general real analytic function $t(x)$, the global limiting eigenvalue density of the above modified JUE 
is still given by \eqref{JUE-limiting-measure}; see Kuijlaars and Vanlessen \cite{Kui-Van2002}. 

Our proof relies on the analysis of a normalized eigenvalue counting function. Let $\lambda_1\leq\cdots\leq \lambda_N$ be the ordered eigenvalues of the modified JUE \eqref{JUEdensity}. We define the function $h_N(x)$ as 
\begin{equation}\label{hN}
 h_N(x)=\sqrt{2}\, \pi\left(\sum_{1\leq j\leq N} \mathbf{1}_{\lambda_j\leq x} - N F(x) \right) , \qquad  x\in [-1,1],
\end{equation}
where $\mathbf{1}_{\lambda \leq x}$ is the indicator function, and $F(x)$ is the cumulative distribution function corresponding to \eqref{JUE-limiting-measure}, namely
\begin{equation}\label{distribution of eq measure}
F(x) = \int_{-1}^x \frac{1}{\pi \sqrt{1-s^2}} ds = 1-\frac{1}{\pi} \arccos x, \qquad  x\in [-1,1].
\end{equation}
The global fluctuation of linear statistics $\sum_{j=1}^N f(\lambda_j)$ for J$\beta$E as $N \to \infty$  was established by Dumitriu and Paquette \cite{DP2012}. For the JUE, an equivalent result was also obtained in \cite{BB-PAFA-2021, CG2021}, which will used in our subsequent analysis. More precisely, \cite[Corollary 2.2]{CG2021} states that for a real analytic function $f$ in a neighbourhood of $[-1,1]$, we have\footnote{A classical global fluctuation result for G$\beta$E was first proved by Johansson \cite{Johansson1998}. For unitary ensembles with general Gaussian-type weights, a similar result to \eqref{eq: JUE-Global-CLT} is also presented in \cite[Corollary 2.2]{CG2021}, with the same variance. However, due to the influence of the hard edge at $\pm 1$, the mean is not zero; instead, it is given by the expression in \eqref{eq: JUE-Global-CLTmean}.}
\begin{equation} \label{eq: JUE-Global-CLT}
-\frac{1}{\sqrt{2} \, \pi}\int_{-1}^{1} f'(x) h_N(x) dx =
\sum_{j=1}^N f(\lambda_j) - N\int_{-1}^1 f(x) d\mu_J(x) \xrightarrow{\ d \ } \mathcal{N}(\mu(f), \sigma^2(f)),
\end{equation}
where 
\begin{eqnarray} \label{eq: JUE-Global-CLTmean}
\mu(f) &=& \frac{\alpha + \beta}{2\pi} \int_{-1}^{1}\frac{f(x)}{\sqrt{1-x^2}} - \frac{\alpha}{2}f(1) - \frac{\beta}{2}f(-1) + \sigma^2(f; t), 
\end{eqnarray}
and $\sigma^2(f) = \sigma^2(f; f)$ with
\begin{eqnarray}
\sigma^2(f; g) &=& \int\int_{[-1, 1]^2} f'(x)g'(y)\frac{\Sigma(x, y)}{2\pi^2}dxdy, \\
\Sigma(x, y) &=& \log\left|\frac{1-xy+\sqrt{1-x^2}\sqrt{1-y^2}}{x-y}\right|. \label{correlation kernel of X}
\end{eqnarray}
Hence, $h_N(x)$ converges in distribution to a Gaussian log-correlated field $X(x)$ with correlation kernel \eqref{correlation kernel of X}. Subsequently, following the ideas in \cite{CFL2021}, we construct a random measure 
\begin{equation} \label{eq:dmu-N}
d\mu_N^\gamma(x) = \frac{e^{\gamma h_N(x)}}{\mathbb{E} e^{\gamma h_N(x)}}, \qquad x \in [-1,1],
\end{equation}
and prove its convergence to a Gaussian multiplicative chaos (GMC) measure as $N \to \infty$. The theory of multiplicative chaos, introduced by Kahane \cite{Kahane1985}, provides a framework for constructing random measures from log-correlated fields; see Rhodes and Vargas \cite{Rho:Var2014} for a comprehensive review. Since the logarithm of the characteristic polynomial in many random matrix models exhibits log-correlated behavior, GMC theory has become a powerful tool for studying these properties, particularly extreme value statistics; see, for example, \cite{Bre:Web:Wong2018, BLZ-GAFA2025, JLW-cpam-2024, NSW-CUE-2020, Webb-CUE-2015}. Given that the function $h_N(x)$ is closely related to the imaginary part of the log-characteristic polynomial, we adopt this approach to establish global eigenvalue fluctuations as in \eqref{eq:gue-optimal}. A crucial aspect of our argument involves verifying the sufficient conditions outlined in \cite[Assumptions 2.5]{CFL2021} to demonstrate that the measure $d\mu_N^\gamma(x)$ in \eqref{eq:dmu-N} converges to a GMC measure for $0<\gamma<\sqrt{2}$. This convergence provides a precise characterization of the extremal behavior of $h_N(x)$, from which the desired rigidity results follow.

It is worth mentioning that beyond random matrices, GMC is also extensively applied when considering extreme value statistics in other probability models; see, for instance, \cite{BH-sine-Gordon-2022, CZ-polymer-2023}.

The following subsection presents our main results.

\subsection{Main Results}\label{sec:results}

We begin by establishing an extremal property of the normalized eigenvalue counting function $h_N(x)$.

\begin{theorem} \label{main theorem for hN}
Let $h_N(x)$ be defined as in \eqref{hN}. For any $\varepsilon>0$, we have
\begin{equation}
\lim_{N\to\infty} \mathbb{P} \left[(1-\varepsilon)\sqrt{2}\log N \leq \max_{x\in [-1, 1]}\{\pm h_N(x)\} \leq (1+\varepsilon)\sqrt{2}\log N\right] = 1.
\end{equation}
\end{theorem}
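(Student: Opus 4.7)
The plan is to cast Theorem~\ref{main theorem for hN} as the extremal output of the Gaussian multiplicative chaos framework developed in \cite{CFL2021}. That framework takes joint exponential-moment asymptotics of $h_N$ as input and produces both the GMC convergence of the random measure $d\mu_N^\gamma$ in \eqref{eq:dmu-N} and the sought-after two-sided extremal bound; the task therefore reduces to verifying its input conditions for the JUE.

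The main analytic step is to establish, for every $p\ge 1$ and real parameters $\gamma_1,\dots,\gamma_p$ (taken inside a fixed subset of $(-\sqrt 2,\sqrt 2)$ for the GMC half, and in all of $\mathbb{R}$ for the upper-bound half), the Gaussian-type asymptotics
\begin{equation*}
\mathbb{E}\!\left[\exp\!\Bigl(\sum_{j=1}^p \gamma_j\, h_N(x_j)\Bigr)\right]
= \bigl(1+o(1)\bigr)\, \mathcal{C}\, \exp\!\Bigl(\tfrac{\log N}{2}\sum_{j=1}^p\gamma_j^2 \;+\; \sum_{1\le j<k\le p}\gamma_j\gamma_k\,\Sigma(x_j,x_k)\Bigr),
\end{equation*}
uniformly in the configuration $(x_1,\dots,x_p)\in[-1,1]^p$, where $\mathcal{C}=\mathcal{C}(x_1,\dots,x_p;\gamma_1,\dots,\gamma_p)$ is an explicit bounded prefactor. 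After subtracting the deterministic part $\sqrt 2\,\pi N F(x_j)$, each factor $e^{\gamma_j h_N(x_j)}$ amounts to a Fisher--Hartwig jump insertion at $x_j$ in the Jacobi weight \eqref{JUEdensity}, so the expectation is a ratio of Hankel determinants with Jacobi-type symbols. I would obtain the asymptotics by a Deift--Zhou steepest-descent analysis built on top of the Jacobi Riemann--Hilbert analysis of Kuijlaars--Vanlessen \cite{Kui-Van2002}, using confluent hypergeometric parametrices at the interior singularities $x_j$, Bessel parametrices at the hard edges $\pm 1$, and a small-norm argument for the remainder.

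With these asymptotics in hand, \cite[Assumption~2.5]{CFL2021} is verified and the abstract theorem of \cite{CFL2021} yields weak-$*$ convergence of $d\mu_N^\gamma$ to a non-degenerate subcritical GMC measure for every $\gamma\in(-\sqrt 2,\sqrt 2)$. The upper bound $\max_x\{\pm h_N(x)\}\le(1+\varepsilon)\sqrt 2\log N$ is then a Chernoff/union-bound: the single-point moment $\mathbb{E}[e^{\gamma h_N(x)}]\asymp N^{\gamma^2/2}$ gives $\mathbb{P}(h_N(x)>c\log N)\lesssim N^{-c^2/2}$ after optimising in $\gamma$, and summing over an $N$-point grid together with a modulus-of-continuity estimate (also extracted from the joint moments) yields $o(1)$ whenever $c>\sqrt 2$; applying the same argument to $-h_N$ gives the two-sided statement. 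The matching lower bound $\max_x\{\pm h_N(x)\}\ge(1-\varepsilon)\sqrt 2\log N$ exploits the localization of the subcritical GMC as $\gamma\nearrow\sqrt 2$ on the $\gamma$-thick points of $h_N$: almost-sure positivity of $\mu_\infty^\gamma([-1,1])$ forces, with probability tending to one, the existence of at least one $x^\star\in[-1,1]$ with $h_N(x^\star)\gtrsim\gamma\log N$, and letting $\gamma\nearrow\sqrt 2$ produces the claimed constant $(1-\varepsilon)\sqrt 2$.

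The main obstacle is the first step, and specifically its \emph{uniformity} over all admissible configurations of the Fisher--Hartwig points $x_j$. One has to handle simultaneously a Painlev\'e-type transition when two bulk points $x_j,x_k$ merge in the interior and the more delicate merging of a bulk Fisher--Hartwig parametrix with a Bessel hard-edge parametrix when some $x_j$ approaches $\pm 1$; without this uniformity the supremum of $h_N$ cannot be controlled all the way to the hard edges, which is precisely the novelty of a JUE rigidity result relative to the GUE statement of \cite{CFL2021}. The size of the window of $\gamma$'s over which the joint moment asymptotics hold is exactly what determines the GMC range $(-\sqrt 2,\sqrt 2)$, and hence the optimal constant $\sqrt 2$ appearing in Theorem~\ref{main theorem for hN}.
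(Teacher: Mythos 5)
Your plan is essentially the paper's proof: uniform Fisher--Hartwig/Hankel-determinant asymptotics obtained by Deift--Zhou steepest descent (confluent hypergeometric, Painlev\'e V, and Bessel parametrices) verify \cite[Assumption~2.5]{CFL2021}, yielding GMC convergence of $d\mu_N^\gamma$ and hence the lower bound via \cite[Theorem~3.4]{CFL2021}, while Markov's inequality over a grid gives the upper bound. The only implementation differences are minor: the paper's grid is the quantile set $\{\kappa_j\}$, across each cell of which $h_N$ oscillates by a deterministic $O(1)$ (so no probabilistic modulus-of-continuity estimate is needed), and the near-edge zone $1-|x|\le N^{-2}\log\log N$ is handled by a simple monotonicity bound on $h_N$ rather than by pushing the Riemann--Hilbert asymptotics all the way to $\pm1$.
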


\begin{remark}
The maximum of $h_N(x)$ in the JUE admits the same estimation as in the GUE \cite[Theorem 1.3]{CFL2021}. This is not surprising as it is reasonable to expect that the ensembles differ significantly only at their edges: $x = \pm 1$ for the hard edge in JUE and the soft edge in GUE. According to its definition in \eqref{hN}, we have $h_N(\pm 1) = 0$ in JUE; while we can expect that $\lim_{N\to\infty} \mathbb{P} (h_N( \pm 1) = 0) = 1$. Hence, in the sense of probability, the maximum of $h_N(x)$ does not differ between these two cases.
\end{remark}


To state our rigidity result, we define the $j$-th percentile $\kappa_j$, associated with \eqref{JUE-limiting-measure} by
\begin{equation} \label{eq: kappa-j-def}
\int_{-1}^{\kappa_j} \frac{1}{\pi}\frac{1}{\sqrt{1-x^2}}dx = \frac{j-1/2}{N}, \qquad j = 1, \ldots, N.
\end{equation}
In contrast to the definition \eqref{eq:gue-quantile} used for the semicircle law, the choice of $\frac{j-1/2}{N}$ ensures symmetry about the origin. It follows immediately that $\kappa_j = -\cos \frac{(j-1/2)\pi}{N}$. 

Our main result is the following optimal global rigidity estimate for the JUE.

\begin{theorem}\label{Main Theorem}
Let $\lambda_1\leq\cdots\leq \lambda_N$ be the ordered eigenvalues of the ensemble \eqref{JUEdensity}, where $t(x)$ is a real analytic function on $[-1,1]$. Then, for any $\varepsilon>0$, we have
\begin{equation} \label{eq:main-theorem}
\lim_{N\to\infty}\mathbb{P}\left((1-\varepsilon)\frac{\log N}{N} \leq \max_{j=1, \cdots, N}\left\{\frac{1}{\sqrt{1-\kappa_j^2}}|\lambda_j-\kappa_j|\right\} \leq (1+\varepsilon)\frac{\log N}{N}\right) = 1.
\end{equation}
\end{theorem}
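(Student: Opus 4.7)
The plan is to translate the extremal estimate on $h_N$ given by Theorem~\ref{main theorem for hN} into rigidity for individual eigenvalues $\lambda_j$. Writing $N_N(x)=\#\{k:\lambda_k\le x\}$, one has $h_N(x)=\sqrt{2}\,\pi(N_N(x)-NF(x))$, so Theorem~\ref{main theorem for hN} gives, with probability tending to one,
\[
|N_N(x)-NF(x)|\le \frac{(1+\varepsilon)\log N}{\pi}\qquad\text{uniformly in }x\in[-1,1].
\]
Evaluated at $x=\lambda_j$, where $N_N(\lambda_j)=j$, together with $F(\kappa_j)=(j-1/2)/N$ this yields $|F(\lambda_j)-F(\kappa_j)|\le(1+\varepsilon+o(1))\log N/(\pi N)$. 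Under the change of variables $\theta=\arccos(-x)$, which sends $F$ to the uniform distribution on $[0,\pi]$, the inequality becomes $|\theta_j-\theta_j^*|\le(1+\varepsilon+o(1))\log N/N$, where $\theta_j=\arccos(-\lambda_j)$ and $\theta_j^*=(j-1/2)\pi/N$.

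For the bulk indices, i.e., those $j$ with $\kappa_j\in[-1+\delta,1-\delta]$ for some fixed small $\delta>0$, a one-term Taylor expansion around $\theta_j^*$ gives $|\lambda_j-\kappa_j|=|\cos\theta_j-\cos\theta_j^*|=(1+o(1))\sin\theta_j^*\,|\theta_j-\theta_j^*|=(1+o(1))\sqrt{1-\kappa_j^2}\,|\theta_j-\theta_j^*|$, and dividing by $\sqrt{1-\kappa_j^2}$ produces the upper bound $(1+\varepsilon+o(1))\log N/N$. For the matching lower bound I would invoke the lower estimate in Theorem~\ref{main theorem for hN} to obtain $x^*\in[-1,1]$ with $|h_N(x^*)|\ge(1-\varepsilon)\sqrt{2}\log N$, set $j=N_N(x^*)$ or $j=N_N(x^*)+1$ depending on the sign of $h_N(x^*)$, and observe that $|F(\kappa_j)-F(x^*)|\ge(1-\varepsilon-o(1))\log N/(\pi N)$. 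Provided $x^*$ and $\kappa_j$ both lie in the bulk, the mean value theorem then delivers $|\lambda_j-\kappa_j|\ge|\kappa_j-x^*|\ge(1-\varepsilon-o(1))\sqrt{1-\kappa_j^2}\log N/N$.

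The hard part will be the hard edges. For $j$ close to $1$ or $N$, the Taylor step above loses a factor of $\log N$, because $\sin((\theta_j+\theta_j^*)/2)/\sin\theta_j^*$ is unbounded as $\theta_j^*\to 0$; Theorem~\ref{main theorem for hN} alone gives only $|\lambda_j-\kappa_j|/\sqrt{1-\kappa_j^2}=O((\log N)^2/N)$ at the edge. The plan is to supplement the bulk argument with hard-edge universality: under the scaling $\xi_j=2N^2(1+\lambda_j)$, the lowest few eigenvalues converge to the Bessel point process, and standard Bessel-kernel rigidity yields $|\lambda_j-\kappa_j|=O(j\sqrt{\log N}/N^2)$ (and symmetrically near $+1$) with high probability. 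Since $\sqrt{1-\kappa_j^2}\asymp j/N$ at the left edge, this gives $|\lambda_j-\kappa_j|/\sqrt{1-\kappa_j^2}=O(\sqrt{\log N}/N)=o(\log N/N)$, so edge indices do not contribute to the maximum. The same edge control, combined with $h_N(\pm 1)=0$, also justifies restricting the extremal point $x^*$ in the lower-bound argument to the bulk, since $|h_N|$ is then of order at most $\sqrt{\log\log N}$ near $\pm 1$ by Bessel-kernel variance bounds.
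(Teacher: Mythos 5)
Your overall architecture (deduce eigenvalue rigidity from the extremal estimate on $h_N$ via a change of variables, then handle the hard edges separately) matches the paper's, and your lower-bound argument is essentially the paper's once one uses Proposition~\ref{lower bound of hN} (the compact-subset version of the lower estimate) rather than Theorem~\ref{main theorem for hN} -- that automatically places the extremizer $x^*$ in the bulk, so the Bessel-variance digression you invoke at the end is not needed. Your arccos substitution is in fact a cleaner route through the bulk than what the paper does: because $\theta\mapsto F(-\cos\theta)$ is affine, the Taylor remainder that forces the paper into its iterative scheme (Proposition~\ref{Main theorem: Eigenvalue Rigidity on the Bulk}, events $\mathcal{B}_{N,m}$) disappears, and a short computation with $\lambda_j-\kappa_j=2\sin\tfrac{\theta_j+\theta_j^*}{2}\sin\tfrac{\theta_j-\theta_j^*}{2}$ gives the sharp bound $(1+\varepsilon)\log N/N$ for the rescaled fluctuation whenever $|\theta_j-\theta_j^*|\ll\theta_j^*$, i.e.\ whenever $j\gg\log N$. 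So your ``bulk'' really reaches down to $j\gtrsim(\log N)^{1+o(1)}$, considerably farther than the fixed-$\delta$ window you state; it is worth noting this explicitly, because it shrinks the edge region you still owe.

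The genuine gap is at the edge, and it is the part that constitutes the paper's real work. You propose to dispose of $j\lesssim\log N$ (more honestly, of all $j$ smaller than wherever your bulk bound stops being sharp) by citing ``standard Bessel-kernel rigidity'' with the quantitative bound $|\lambda_j-\kappa_j|=O\bigl(j\sqrt{\log N}/N^{2}\bigr)$. This is not a result you can quote: hard-edge universality gives distributional convergence of $2N^{2}(1+\lambda_j)$ for each \emph{fixed} $j$, not a high-probability fluctuation bound with an explicit $\sqrt{\log N}$ tail, uniform over a range of $j$ growing with $N$. Even granting some form of Bessel rigidity, you would need it uniformly on the entire range left over from the bulk argument, and you do not say how to bridge the gap between ``the lowest few eigenvalues'' and that range. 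The paper's device is different and self-contained: it shows that $h_N$ itself is strictly smaller than the order-$\sqrt{2}\log N$ bound once one is close enough to a hard edge -- Lemma~\ref{hN near edge first lemma} gives $\max_{x\le\kappa_{2N^{\delta}}}h_N(x)\le C\log N$ with $C$ arbitrary, and Lemma~\ref{hN near edge second lemma} gives $\max_{x\le\kappa_{2\log N}}h_N(x)\le C\sqrt{\log N}$ -- both derived from the exponential-moment estimates of Proposition~\ref{Assumption 3.1}, without appealing to any external universality input. Those refined $h_N$ bounds exactly cancel the blow-up of $(1-\kappa_j^{2})^{-1/2}$ and are the ingredient your proposal is missing.
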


\begin{remark} \label{remark for main theorem}
We employ the GMC theory to prove the lower bound in the above theorem, following a strategy similar to the GUE case in \cite{CFL2021}. However, the proof of the upper bound differs significantly due to the influence of the hard edge. More precisely, the singularities of $ F'(x)$ at $ x = \pm 1 $, as seen in \eqref{distribution of eq measure}. Our strategy proceeds in two stages: first, we use the bounds for the maximum of $ h_N(x) $ from Theorem \ref{main theorem for hN} to establish an initial bulk estimate $ |\lambda_j - \kappa_j| \leq \log N / N $ for all $ j $ (see \eqref{eq:event-Bn0} below). Then, we implement an iterative refinement procedure to improve this estimate as $ \kappa_j $ approaches the hard edges at $ \pm 1 $.
\end{remark}

\begin{remark}
The approach used for the GUE edge rigidity in \cite{CFL2021} does not extend to the JUE. In the GUE case, the semicircular density $ \frac{2}{\pi} \sqrt{1-\kappa_j^2} $ is monotonically increasing near $ -1 $, and the fluctuation $ |\lambda_j - \kappa_j| $ is bounded by $ |\lambda_1 - \kappa_1| $, where the smallest eigenvalue $\lambda_1$ follows the Tracy-Widom distribution \cite{Tracy-Widom}. In contrast, for the JUE, the function $ F'(\kappa_j) $ is unbounded near $ -1 $, and the fluctuations $ |\lambda_j - \kappa_j| $ are larger than $ |\lambda_1 - \kappa_1| $. Consequently, we need to rely on the iterative refinement scheme outlined in Remark \ref{remark for main theorem} to establish edge rigidity. An analogous argument applies near the right endpoint $ x = 1 $.
\end{remark}

To study the convergence of the measure $d\mu_N^\gamma(x)$ in \eqref{eq:dmu-N}, we need a detailed estimation of the exponential moments $h_N(x)$, which are related to Hankel determinants. By Heine's identity (see, for example, \cite[Prop 3.8]{Deift2000}), we have
\begin{equation}\label{Heine Identity}
\mathbb{E}e^{\sum_{j=1}^N w(\lambda_j)} = \frac{1}{Z_N} D_N(e^{w}),
\end{equation}
where the expectation is taken with respect to density function in \eqref{JUEdensity}. For our purposes,  we  need asymptotics of Hankel determinants as follows
\begin{multline}
D_N(x_1, x_2; \gamma_1, \gamma_2; w)  \\
:= \det\left(\int_{\mathbb{R}} \lambda^{i+j} e^{\sqrt{2}\pi\gamma_1 \mathbf{1}_{\lambda\leq x_1} + \sqrt{2}\pi\gamma_2 \mathbf{1}_{\lambda\leq x_2}} (1-\lambda)^\alpha (1+\lambda)^{\beta} e^{w(\lambda)} d\lambda\right)_{i,j=0}^{N-1}, \label{eq1}
\end{multline}
and
\begin{eqnarray}
D_N(x; \gamma; w) := \det\left(\int_{\mathbb{R}} \lambda^{i+j} e^{\sqrt{2}\pi\gamma \mathbf{1}_{\lambda\leq x}} (1-\lambda)^\alpha (1+\lambda)^{\beta} e^{w(\lambda)} d\lambda\right)_{i,j=0}^{N-1}, \label{eq2}
\end{eqnarray}
where $-1 < x_1 \leq x_2 < 1$, $\gamma_1, \gamma_2 \in \mathbb{R}$, and $w(\lambda)$ is real-analytic in $\mathbb{R}$. We will adopt Deift-Zhou nonlinear steepest method to derive asymptotics of these determinants.

\paragraph{Organization of the paper}
The rest of the paper is organized as follows. In Section \ref{RHP for Hankel Determinants}, we present the Riemann-Hilbert problem and the differential identities for the Hankel determinants. In Section \ref{Steepest Descent Analysis for the RH Problem}, we apply the Deift-Zhou nonlinear steepest descent method to transform the original Riemann-Hilbert problem into small-norm ones. Depending on the weight of the Hankel determinants, we will consider three distinct cases: (i) the weight has two jumps at $x_1$ and $x_2$, which are neither close to each other nor to the endpoints $\pm 1$; (ii) the weight has two jumps at $x_1$ and $x_2$, which are close to each other but not to the endpoints $\pm 1$; (iii) the weight has only one jump at $x$, which is close to $-1$ or $1$. In Section \ref{Sec:Asy-hankle}, we derive the asymptotics of the Hankel determinants under these three different regimes using steepest descent analysis. Finally, in Section \ref{Eigenvalue Rigidity}, we investigate the maximum of $h_N(x)$, establish eigenvalue rigidity, and prove our main theorems.

\section{The Riemann-Hilbert Problems for the Hankel Determinants} \label{RHP for Hankel Determinants}

Starting from this section, we conduct steepest descent analysis to derive asymptotics of the corresponding Hankel determinants defined in \eqref{eq1} and \eqref{eq2}. It is well-known that these determinants are related to a Riemann-Hilbert (RH) problem as follows.

{}
\begin{rhp}\label{rhp for Y}
 \hfill
 \begin{itemize}
 \item[(a)] $Y: \mathbb{C}\setminus [-1, 1] \to \mathbb{C}^{2\times 2}$ is analytic.

 \item[(b)]  $Y(z)$ possesses continuous boundary values $Y_\pm(x)$ for $x \in (-1, 1)\setminus\{x_1, x_2\}$, with $-1<x_1\leq x_2<1$, when $z$ approaches $x$ from above and below, respectively. Moreover, they satisfy the following jump condition
\begin{equation}
Y_+(x) = Y_-(x)\left(\begin{array}{cc} 1 & \mathcal{W}(x)\\ 0 & 1 \end{array}\right) \qquad  \textrm{for }x\in (-1, 1)\setminus\{x_1, x_2\},
\end{equation}
 where
 \begin{equation} \label{RHP-Y-weight}
  \mathcal{W}(x) = e^{\sqrt{2}\pi\gamma_1 1_{(-1, x_1]}(x)+\sqrt{2}\pi\gamma_2 1_{(-1, x_2]}} w_J(x), \qquad x \in [-1,1].
 \end{equation}
 Here $\gamma_1, \gamma_2$ are two real constants and $w_J(x)$ is the following modified Jacobi weight
    \begin{equation}\label{Jacobi weight w}
    w_J(x) = (1-x)^\alpha (1+x)^\beta  e^{t(x)}, \qquad \alpha, \beta > -1,
    \end{equation}
and $t(x)$ is real analytic on $[-1,1]$.

 \item[(c)] As $z\to\infty$, $Y(z)$ has the following asymptotics
  \[
  Y(z) = \left( I+O\left(\frac{1}{z}\right)\right) \left(\begin{array}{cc} z^N & 0\\ 0 & z^{-N} \end{array}\right).
  \]

 \item[(d)] As $z\to x_j, j=1, 2$ and $z \in \mathbb{C} \setminus [-1,1]$, we have
  \[
  Y(z) = O(\log|z-x_j|).
  \]

 \item[(e)] As $z\to 1$ and $z \in \mathbb{C} \setminus [-1,1]$, we have
  \begin{equation}\label{asym1 for Y}
  Y(z) = \left\{
  \begin{array}{ll}
  O\left(\begin{array}{cc} 1 & |z-1|^\alpha \\ 1 & |z-1|^\alpha \end{array}\right), & \text{if } \alpha<0;\\
  O\left(\begin{array}{cc} 1 & \log|z-1| \\ 1 & \log|z-1| \end{array}\right), & \text{if } \alpha=0,\\
  O\left(\begin{array}{cc} 1 & 1 \\ 1 & 1 \end{array}\right), & \text{if } \alpha>0.
  \end{array}
  \right.
  \end{equation}

  \item[(f)] As $z\to -1$ and $z \in \mathbb{C} \setminus [-1,1]$, we have
  \begin{equation}\label{asym2 for Y}
  Y(z) = \left\{
  \begin{array}{ll}
  O\left(\begin{array}{cc} 1 & |z+1|^\beta \\ 1 & |z+1|^\beta \end{array}\right), & \text{if } \beta<0;\\
  O\left(\begin{array}{cc} 1 & \log|z+1| \\ 1 & \log|z+1| \end{array}\right), & \text{if } \beta=0,\\
  O\left(\begin{array}{cc} 1 & 1 \\ 1 & 1 \end{array}\right), & \text{if } \beta>0.
  \end{array}
  \right.
  \end{equation}
 \end{itemize}
\end{rhp}

By the well-known results of Fokas, Its and Kitaev \cite{Fokas-Its-Kitaev1992}, the solution of the above RH problem is given in terms of corresponding orthogonal polynomials and their Cauchy transforms. Let $p_N(x)$ be the orthonormal polynomials with respect to $\mathcal{W}(x)$ in \eqref{RHP-Y-weight}, that is
\begin{equation}
\int_{-1}^1 p_m(x) p_n(x) \mathcal{W}(x) dx = \delta_{m,n},
\end{equation}
where $p_N(x) = \varkappa_N x^N + \cdots$ with $\varkappa_N>0$. Then, the unique solution to the above RH problem is given by
\begin{eqnarray} \label{Y}
Y(z) = Y_N(z; x_1, x_2; \gamma_1, \gamma_2; \mathcal{W}) =
\left(
\begin{array}{cc}
\frac{1}{\varkappa_N} p_N(z) & \frac{1}{\varkappa_N} C[p_N \mathcal{W}](z) \\
-2\pi i \varkappa_{N-1} p_{N-1}(z) & -2\pi i \varkappa_{N-1} C[p_{N-1} \mathcal{W}](z)\\
\end{array}
\right),
\end{eqnarray}
where $C[f]$ is the Cauchy transform of $f$ on $[-1,1]$
\begin{eqnarray}
C[f](z) = \frac{1}{2\pi i}\int_{-1}^1  \frac{f(x)}{x-z} dx \qquad z \in \mathbb{C}\setminus[-1,1].
\end{eqnarray}
One may refer to \cite[Theorem 2.4]{ABJ2004} for the case where $\gamma_1=\gamma_2=0$.

Furthermore, the logarithmic derivatives of the Hankel determinants defined in \eqref{eq1} and \eqref{eq2} admit explicit representations in terms of the matrix-valued function $Y$ specified in \eqref{Y}. The following two differential identities play a crucial role, which provide the foundation for our subsequent asymptotic analysis.

\begin{proposition} \label{prop:diff-identity}
Let the Hankel determinants $D_N(x_1, x_2; \gamma_1, \gamma_2; w)$ and $D_N(x; \gamma; w)$ defined in \eqref{eq1} and \eqref{eq2}, respectively. For $\gamma_1, \gamma_2 \in \mathbb{R}$, $-1 < x_1 \leq x_2 < 1$, we have
\begin{equation}\label{di1}
\frac{d}{dx_2} \log D_N(x_1, x_2; \gamma_1, \gamma_2; t) = -(1-x_2)^\alpha (1+x_2)^\beta e^{t(x_2)} \frac{1-e^{\sqrt{2}\pi\gamma_2}}{2\pi i} \lim_{z\to x_2}\left(Y^{-1}(z) Y'(z)\right)_{2, 1},
\end{equation}
where the limit is taken as $z\to x_2$ with $z\in\mathbb{C}\setminus \{x_1, x_2\}$. If $x_1 = x_2 = x \in (-1, 1)$, we have
\begin{equation}\label{di2}
\frac{d}{dx} \log D_N(x; \gamma; t) = -(1-x)^\alpha (1+x)^\beta e^{t(x)} \frac{1 - e^{\sqrt{2}\pi\gamma}}{2\pi i} \lim_{z\to x}\left(Y^{-1}(z) Y'(z)\right)_{2, 1},
\end{equation}
where $\gamma = \gamma_1+\gamma_2$, and the limit is taken in a similar way as \eqref{di1}. 
\end{proposition}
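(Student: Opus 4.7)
The plan is to use the classical factorization $D_N = \prod_{j=0}^{N-1} h_j$, where $h_j = 1/\varkappa_j^2$ is the squared $\mathcal{W}$-norm of the monic orthogonal polynomial of degree $j$. Differentiating in $x_2$, the first-order variations of the polynomials themselves drop out by orthogonality, leaving
\begin{equation*}
\frac{d}{dx_2}\log h_j = \frac{1}{h_j}\int P_j(\lambda)^2\,\partial_{x_2}\mathcal{W}(\lambda)\,d\lambda.
\end{equation*}
Since $x_2$ enters $\mathcal{W}$ only through the step factor $e^{\sqrt{2}\pi\gamma_2 \mathbf{1}_{\lambda\leq x_2}}$, its distributional derivative is a point mass, $\partial_{x_2}\mathcal{W}(\lambda) = (e^{\sqrt{2}\pi\gamma_2}-1)\,w_J(x_2)\,\delta(\lambda-x_2)$. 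Summing over $j=0,\dots,N-1$ yields
\begin{equation*}
\frac{d}{dx_2}\log D_N = (e^{\sqrt{2}\pi\gamma_2}-1)\,w_J(x_2)\,K_N(x_2,x_2), \qquad K_N(x,x)=\sum_{k=0}^{N-1}p_k(x)^2,
\end{equation*}
where $p_k$ denotes the $\mathcal{W}$-orthonormal polynomial of degree $k$.

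The second step is to identify $K_N(x_2,x_2)$ with the limit of $(Y^{-1}Y')_{2,1}$ at $x_2$. From \eqref{Y} together with $\det Y\equiv 1$ (which follows from the unimodularity of the triangular jump matrix and the normalization at infinity), a direct computation gives
\begin{equation*}
(Y^{-1}(z)Y'(z))_{2,1} = \frac{2\pi i\,\varkappa_{N-1}}{\varkappa_N}\bigl(p_{N-1}(z)p_N'(z) - p_N(z)p_{N-1}'(z)\bigr),
\end{equation*}
in which only the polynomial entries of $Y$ appear, so the expression is entire in $z$ and the limit as $z\to x_2$ is just evaluation. Applying the confluent Christoffel--Darboux identity
\begin{equation*}
\sum_{k=0}^{N-1}p_k(x)^2 = \frac{\varkappa_{N-1}}{\varkappa_N}\bigl(p_N'(x)p_{N-1}(x) - p_N(x)p_{N-1}'(x)\bigr)
\end{equation*}
then gives $\lim_{z\to x_2}(Y^{-1}(z)Y'(z))_{2,1} = 2\pi i\,K_N(x_2,x_2)$. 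Substituting back and using the elementary sign identity $-(1 - e^{\sqrt{2}\pi\gamma_2})/(2\pi i)\cdot 2\pi i = e^{\sqrt{2}\pi\gamma_2} - 1$ reproduces \eqref{di1}.

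Formula \eqref{di2} follows by the same argument: when $x_1 = x_2 = x$, the weight has a single jump of size $e^{\sqrt{2}\pi(\gamma_1+\gamma_2)}=e^{\sqrt{2}\pi\gamma}$ at $x$, and the variational computation above runs verbatim with $\gamma$ in place of $\gamma_2$. The only places requiring genuine care are the distributional differentiation of the step function (legitimate because the moment integrals are smooth functions of $x_2$ away from $x_1$) and the verification that $\det Y\equiv 1$; once these are in place, the proof reduces to sign bookkeeping, since the Cauchy-transform entries of $Y$ cancel in the $(2,1)$ component of $Y^{-1}Y'$ and no continuity issue at $x_2$ arises.
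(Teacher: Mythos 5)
Your proof is correct and follows essentially the same route as the paper, which simply refers to the standard differential-identity technique from Charlier \cite{Charlier-IMRN2019} and related works \cite{CG2021, CFL2021}: factor $D_N$ as a product of orthogonal-polynomial norms, differentiate in $x_2$ (using orthogonality to kill the variation of the polynomials themselves and picking up the boundary term $(e^{\sqrt{2}\pi\gamma_2}-1)w_J(x_2)$), and then identify the resulting Christoffel--Darboux kernel with $(Y^{-1}Y')_{2,1}/(2\pi i)$ via $\det Y \equiv 1$ and the confluent CD formula.
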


\begin{proof}
The proof is similar to that in \cite[Sec. 3]{Charlier-IMRN2019}. Similar differential identities are also obtained in \cite{CG2021, CFL2021}.
\end{proof}

Starting from the next section, we will perform a series of transformations to RH problem \ref{rhp for Y}. The key idea is to transform the original RH problem to the so-called ``small norm'' RH problem, where the asymptotics can be established.

\section{Steepest Descent Analysis for the RH Problem} \label{Steepest Descent Analysis for the RH Problem}

\subsection{Normalization: $Y\mapsto T$}

The first transformation $Y\mapsto T$ is introduced to normalize the behavior of $Y$ when $z\to\infty$. Let
\begin{equation} \label{transform: Y-to-T}
T(z) = 2^{N\sigma_3}Y(z)\varphi(z)^{-N\sigma_3}
\end{equation}
for $z\in \mathbb{C}\setminus [-1, 1]$, where
\begin{equation*}
\sigma_3 = \left(\begin{array}{cc} 1 & 0 \\ 0 & -1 \end{array}\right)
\end{equation*}
is the third Pauli matrix and
\begin{equation} \label{varphi}
\varphi(z) = z+(z^2-1)^{1/2}, \qquad z\in \mathbb{C}\setminus [-1, 1].
\end{equation}
The principal branch is taken in the above definition such that $\varphi(z)$ behaves like $2z$ as $z\to\infty$. One can easily check that
\begin{equation}\label{phi+phi-}
\varphi_{-}(x)\varphi_{+}(x) = 1 \qquad \text{ for } x\in (-1, 1).
\end{equation}
Then we have the following RH problem for $T$.

\begin{rhp}
 \hfill
 \begin{itemize}
 \item[(a)] $T: \mathbb{C}\setminus [-1, 1] \to \mathbb{C}^{2\times 2}$ is analytic.

 \item[(b)] $T$ satisfies the following jump relations
 \begin{eqnarray}
 T_+(x) =\left\{
 \begin{array}{ll}
 T_-(x) \left(\begin{array}{cc} \varphi_+(x)^{-2N} & e^{\sqrt{2}\pi\gamma_1+\sqrt{2}\pi\gamma_2}w_J(x) \\ 0 & \varphi_-(x)^{-2N} \end{array} \right), \quad  & -1<x<x_1;\\
 T_-(x) \left(\begin{array}{cc} \varphi_+(x)^{-2N} & e^{\sqrt{2}\pi\gamma_2}w_J(x) \\ 0 & \varphi_-(x)^{-2N} \end{array} \right), \qquad & x_1<x<x_2; \label{T-jump-2} \\
 T_-(x) \left(\begin{array}{cc} \varphi_+(x)^{-2N} & w_J(x) \\ 0 & \varphi_-(x)^{-2N} \end{array} \right), \qquad & x_2<x<1;
 \end{array}
 \right.
 \end{eqnarray}

 \item[(c)] As $z\to\infty$, $T(z)$ has the following asymptotics
  \[
  T(z) = I+O\left(\frac{1}{z}\right)
  \]

 \item[(d)] As $z\to x_j, j=1, 2$, we have
  \[
  T(z) = O(\log|z-x_j|).
  \]

 \item[(e)] As $z\to \pm 1$, $T(z)$ satisfies the same behaviour as in \eqref{asym1 for Y} and \eqref{asym2 for Y}.
 \end{itemize}
\end{rhp}

\subsection{Contour Deformation: $T\to S$}

As $\Re \varphi_{\pm}(x) = 0$ when $x\in (-1, 1)$, the diagonal entries $\varphi_{\pm}(x)^{2N}$ are highly oscillatory as $N\to \infty$. To remove these oscillatory terms, we introduce the second transformation $T\to S$, which involves a contour transformation. This is based on the following matrix factorization
\begin{eqnarray}
& &\left(\begin{array}{cc} \varphi_+(x)^{-2N} & e^{\sqrt {2}\pi\gamma} w_J(x) \\ 0 & \varphi_-(x)^{-2N}\end{array}\right) \nonumber\\
&& = \left(\begin{array}{cc} 1 & 0 \\ e^{-\sqrt {2}\pi\gamma}w_J(x)^{-1}\varphi_-(x)^{-2N} & 1\end{array}\right)\left(\begin{array}{cc} 0 & e^{\sqrt {2}\pi\gamma}w_J(x) \\ -e^{-\sqrt {2}\pi\gamma}w_J(x)^{-1} & 0\end{array}\right) \nonumber\\
& & \times\left(\begin{array}{cc} 1 & 0 \\ e^{-\sqrt {2}\pi\gamma}w_J(x)^{-1}\varphi_{+}(x)^{-2N} & 1 \end{array}\right), \label{T-jump-factorize}
\end{eqnarray}
where the property of $\varphi(z)$ in \eqref{phi+phi-} is used. Depending on properties of the jump points in the original weight function $\mathcal{W}(x)$ in \eqref{RHP-Y-weight}, we will consider three cases in the subsequent analysis, namely,
\begin{itemize}
\item[(I)] the weight has two jumps at $x_1$ and $x_2$, which are neither close to each other nor to the endpoints $\pm 1$;

\item[(II)] the weight has two jumps at $x_1$ and $x_2$, which are close to each other but not to the endpoints $\pm 1$;

\item[(III)] the weight has only one jump at $x$, which is close to $-1$ or $1$.
\end{itemize}

We will discuss the second transformation $T\mapsto S$ for Case (I) in details, while the other two cases are similar. First, let us extend the definition of $w_J(x)$ in \eqref{Jacobi weight w} from $[-1,1]$ to $U \setminus ((-\infty, -1] \cup [1, \infty)$, where $U$ is a neighbourhood of $[-1, 1]$ in which $t(z)$ is analytic. Denote this analytic extension by $w_J(z)$, i.e.,
\begin{equation} \label{def:wJz}
w_J(z) = (1-z)^\alpha (1+z)^{\beta} e^{t(z)}, \qquad z \in U \setminus ((-\infty, -1] \cup [1, \infty).
\end{equation}
Based on the factorization in \eqref{T-jump-factorize}, we also define 
\begin{eqnarray}
J_1(z; \gamma) &=& \left(\begin{array}{cc} 1 & 0 \\ e^{-\sqrt {2}\pi\gamma}w_J(z)^{-1}\varphi(z)^{-2N} & 1\end{array}\right), \label{eq:JS-jump-1} \\
J_2(x; \gamma) &=& \left(\begin{array}{cc} 0 & e^{\sqrt {2}\pi\gamma}w_J(x) \\ -e^{-\sqrt {2}\pi\gamma}w_J(x)^{-1} & 0\end{array}\right), \label{eq:JS-jump-2} \\
J_3(z; \gamma) &=& \left(\begin{array}{cc} 1 & 0 \\ e^{-\sqrt {2}\pi\gamma}w_J(z)^{-1}\varphi(z)^{-2N} & 1 \end{array}\right), \label{eq:JS-jump-3}
\end{eqnarray}
where the value of $\gamma$ is taken equal to $\gamma_1+\gamma_2$ for $\Re z<x_1$, equal to $\gamma_2$ for $x_1<\Re z<x_2$ and equal to $0$ for $\Re z>x_2$.  Now, we introduce the second transformation $T \mapsto S$ by defining $S$ as follows:
\begin{equation} \label{eq:T-S-map}
S(z) = \left\{\begin{array}{ll}T(z)J_3(z; \gamma)^{-1}, & z\in\Omega,\\ T(z)J_1(z;\gamma), & z\in\overline{\Omega},\\ T(z), & \text{elsewhere};
\end{array}\right.
\end{equation}
see the description of the regions $\Omega$ and $\overline{\Omega }$ in Figure \ref{RHP for S}.

\begin{figure}[htbp]

\centering

\tikzset{every picture/.style={line width=0.75pt}} 

\begin{tikzpicture}[x=0.75pt,y=0.75pt,yscale=-1,xscale=1]

\draw    (100,110) -- (232.44,110.38) ;
\draw [shift={(171.22,110.2)}, rotate = 180.16] [fill={rgb, 255:red, 0; green, 0; blue, 0 }  ][line width=0.08]  [draw opacity=0] (8.93,-4.29) -- (0,0) -- (8.93,4.29) -- cycle    ;
\draw [shift={(100,110)}, rotate = 0.16] [color={rgb, 255:red, 0; green, 0; blue, 0 }  ][fill={rgb, 255:red, 0; green, 0; blue, 0 }  ][line width=0.75]      (0, 0) circle [x radius= 3.35, y radius= 3.35]   ;
\draw    (232.44,110.38) -- (364.89,110.76) ;
\draw [shift={(303.67,110.58)}, rotate = 180.16] [fill={rgb, 255:red, 0; green, 0; blue, 0 }  ][line width=0.08]  [draw opacity=0] (8.93,-4.29) -- (0,0) -- (8.93,4.29) -- cycle    ;
\draw [shift={(232.44,110.38)}, rotate = 0.16] [color={rgb, 255:red, 0; green, 0; blue, 0 }  ][fill={rgb, 255:red, 0; green, 0; blue, 0 }  ][line width=0.75]      (0, 0) circle [x radius= 3.35, y radius= 3.35]   ;
\draw    (364.89,110.76) -- (497.33,111.14) ;
\draw [shift={(497.33,111.14)}, rotate = 0.16] [color={rgb, 255:red, 0; green, 0; blue, 0 }  ][fill={rgb, 255:red, 0; green, 0; blue, 0 }  ][line width=0.75]      (0, 0) circle [x radius= 3.35, y radius= 3.35]   ;
\draw [shift={(436.11,110.96)}, rotate = 180.16] [fill={rgb, 255:red, 0; green, 0; blue, 0 }  ][line width=0.08]  [draw opacity=0] (8.93,-4.29) -- (0,0) -- (8.93,4.29) -- cycle    ;
\draw [shift={(364.89,110.76)}, rotate = 0.16] [color={rgb, 255:red, 0; green, 0; blue, 0 }  ][fill={rgb, 255:red, 0; green, 0; blue, 0 }  ][line width=0.75]      (0, 0) circle [x radius= 3.35, y radius= 3.35]   ;
\draw    (100,110) .. controls (142.33,61.14) and (184.33,61.14) .. (232.44,110.38) ;
\draw [shift={(171.28,73.86)}, rotate = 184.29] [fill={rgb, 255:red, 0; green, 0; blue, 0 }  ][line width=0.08]  [draw opacity=0] (8.93,-4.29) -- (0,0) -- (8.93,4.29) -- cycle    ;
\draw    (232.44,110.38) .. controls (274.78,61.51) and (316.78,61.51) .. (364.89,110.76) ;
\draw [shift={(303.72,74.24)}, rotate = 184.29] [fill={rgb, 255:red, 0; green, 0; blue, 0 }  ][line width=0.08]  [draw opacity=0] (8.93,-4.29) -- (0,0) -- (8.93,4.29) -- cycle    ;
\draw    (364.89,110.76) .. controls (407.22,61.89) and (449.22,61.89) .. (497.33,111.14) ;
\draw [shift={(436.17,74.62)}, rotate = 184.29] [fill={rgb, 255:red, 0; green, 0; blue, 0 }  ][line width=0.08]  [draw opacity=0] (8.93,-4.29) -- (0,0) -- (8.93,4.29) -- cycle    ;
\draw    (364.89,111.56) .. controls (407.22,166.74) and (449.22,166.74) .. (497.33,111.14) ;
\draw [shift={(436.17,152.37)}, rotate = 175.16] [fill={rgb, 255:red, 0; green, 0; blue, 0 }  ][line width=0.08]  [draw opacity=0] (8.93,-4.29) -- (0,0) -- (8.93,4.29) -- cycle    ;
\draw    (232.44,111.18) .. controls (274.78,166.36) and (316.78,166.36) .. (364.89,110.76) ;
\draw [shift={(303.72,151.99)}, rotate = 175.16] [fill={rgb, 255:red, 0; green, 0; blue, 0 }  ][line width=0.08]  [draw opacity=0] (8.93,-4.29) -- (0,0) -- (8.93,4.29) -- cycle    ;
\draw    (100,111.61) .. controls (142.33,166.79) and (184.33,166.79) .. (232.44,111.18) ;
\draw [shift={(171.28,152.42)}, rotate = 175.16] [fill={rgb, 255:red, 0; green, 0; blue, 0 }  ][line width=0.08]  [draw opacity=0] (8.93,-4.29) -- (0,0) -- (8.93,4.29) -- cycle    ;

\draw (88,129) node [anchor=north west][inner sep=0.75pt]   [align=left] {$\displaystyle -1$};
\draw (222,131) node [anchor=north west][inner sep=0.75pt]   [align=left] {$\displaystyle x_{1}$};
\draw (356,131) node [anchor=north west][inner sep=0.75pt]   [align=left] {$\displaystyle x_{2}$};
\draw (491,132) node [anchor=north west][inner sep=0.75pt]   [align=left] {$\displaystyle 1$};
\draw (152,85) node [anchor=north west][inner sep=0.75pt]   [align=left] {$\displaystyle \Omega $};
\draw (283,85) node [anchor=north west][inner sep=0.75pt]   [align=left] {$\displaystyle \Omega $};
\draw (412,86) node [anchor=north west][inner sep=0.75pt]   [align=left] {$\displaystyle \Omega $};
\draw (150,117) node [anchor=north west][inner sep=0.75pt]   [align=left] {$\displaystyle \overline{\Omega }$};
\draw (281,118) node [anchor=north west][inner sep=0.75pt]   [align=left] {$\displaystyle \overline{\Omega }$};
\draw (411,115) node [anchor=north west][inner sep=0.75pt]   [align=left] {$\displaystyle \overline{\Omega }$};
\draw (155,44) node [anchor=north west][inner sep=0.75pt]   [align=left] {$\displaystyle \Gamma _{1}$};
\draw (283,44) node [anchor=north west][inner sep=0.75pt]   [align=left] {$\displaystyle \Gamma _{2}$};
\draw (414,44) node [anchor=north west][inner sep=0.75pt]   [align=left] {$\displaystyle \Gamma _{3}$};
\draw (150,162) node [anchor=north west][inner sep=0.75pt]   [align=left] {$\displaystyle \overline{\Gamma }_{1}$};
\draw (280,163) node [anchor=north west][inner sep=0.75pt]   [align=left] {$\displaystyle \overline{\Gamma }_{2}$};
\draw (415,164) node [anchor=north west][inner sep=0.75pt]   [align=left] {$\displaystyle \overline{\Gamma }_{3}$};

\end{tikzpicture}

\caption{Regions $\Omega$, $\overline{\Omega }$ and jump contours of the RH problem for $S$ in Case (I)} \label{RHP for S}

\end{figure}

Then, we have the following RH problem for $S$.

\begin{rhp}
 \hfill
 \begin{itemize}
 \item[(a)] $S: \mathbb{C}\setminus \Sigma_S \to \mathbb{C}^{2\times 2}$ is analytic, where $\Sigma_S =\bigcup\limits_{j=1}^3 \Gamma_j  \bigcup\limits_{j=1}^3\overline{\Gamma}_j \cup  [-1, 1] $; see Figure \ref{RHP for S}. 

 \item[(b)] $S$ satisfies the jump condition
\begin{equation}
 S_+(z) = S_-(z) J_S(z), \qquad z \in \Sigma_S
\end{equation} 
with
 \begin{eqnarray} \label{jumps for S}
 J_S(z) = \begin{cases}
J_1(z; \gamma), & z\in \overline{\Gamma}_j, j=1,2,3,\\
J_2(x; \gamma), & -1<x<1,\\
J_3(z; \gamma), & z\in \Gamma_j, j=1,2,3,
 \end{cases}
 \end{eqnarray}
and the matrices $J_j$ are defined in \eqref{eq:JS-jump-1}-\eqref{eq:JS-jump-3}.

 \item[(c)] As $z\to\infty$, $S(z)$ has the following asymptotics
  \[
  S(z) = I+O\left(\frac{1}{z}\right).
  \]

 \item[(d)] As $z\to x_j, j=1, 2$, we have
  \[
  S(z) = O(\log|z-x_j|).
  \]

 \item[(e)]
 For $\alpha<0$, the matrix function $S(z)$ has the following behavior:
 \begin{equation*}
 S(z) =
  O\left(\begin{array}{cc} 1 & |z-1|^\alpha \\ 1 & |z-1|^\alpha \end{array}\right), \text{ as }z\to 1, z\in \mathbb{C}\setminus\Sigma.
 \end{equation*}
 For $\alpha=0$, $S(z)$ has the following behavior:
 \begin{equation*}
 S(z) =
  O\left(\begin{array}{cc} \log|z-1| & \log|z-1| \\ \log|z-1| & \log|z-1| \end{array}\right), \text{ as }z\to 1, z\in \mathbb{C}\setminus\Sigma.
 \end{equation*}
 For $\alpha<0$, $S(z)$ has the following behavior:
 \begin{equation*}
 S(z) = \left\{\begin{array}{ll} O\left(\begin{array}{cc} 1 & 1\\ 1 & 1 \end{array}\right), & \text{as $z\to 1$ outside the lens},\\
 O\left(\begin{array}{cc} |z-1|^{-\alpha} & 1\\ |z-1|^{-\alpha} & 1 \end{array}\right), & \text{as $z\to 1$ inside the lens},
 \end{array}
 \right.
 \end{equation*}

  \item[(f)] $S(z)$ has the same behavior near $-1$ if we replace in (e) $\alpha$ by $\beta$, $|z-1|$ by $|z+1|$ and take the limit $z\to -1$ instead of $z\to 1$.
 \end{itemize}
\end{rhp}


In Case (II), since the two jump points $x_1, x_2$ are close to each other, we do not open the lens along the interval $[x_1,x_2]$. Consequently, the transformation $T \mapsto S$ in \eqref{eq:T-S-map} is defined based on Figure \ref{RHP for S in the merging case}. Note that the jump matrix for $S(x)$ on $[x_1,x_2]$ remains the same as that for $T(x)$ in \eqref{T-jump-2}.

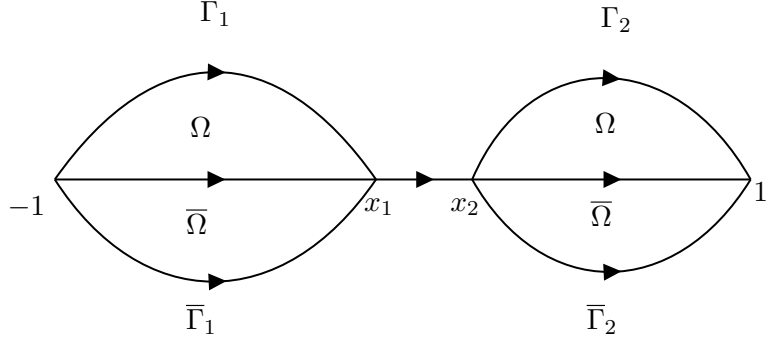
\begin{figure}[htbp]
\centering

\tikzset{every picture/.style={line width=0.75pt}} 

\begin{tikzpicture}[x=0.75pt,y=0.75pt,yscale=-1,xscale=1]

\draw    (100,114) -- (260.33,114) -- (308.33,114) -- (447.33,114) ;
\draw [shift={(185.17,114)}, rotate = 180] [fill={rgb, 255:red, 0; green, 0; blue, 0 }  ][line width=0.08]  [draw opacity=0] (8.93,-4.29) -- (0,0) -- (8.93,4.29) -- cycle    ;
\draw [shift={(289.33,114)}, rotate = 180] [fill={rgb, 255:red, 0; green, 0; blue, 0 }  ][line width=0.08]  [draw opacity=0] (8.93,-4.29) -- (0,0) -- (8.93,4.29) -- cycle    ;
\draw [shift={(382.83,114)}, rotate = 180] [fill={rgb, 255:red, 0; green, 0; blue, 0 }  ][line width=0.08]  [draw opacity=0] (8.93,-4.29) -- (0,0) -- (8.93,4.29) -- cycle    ;
\draw    (100,114) .. controls (150.33,42) and (210.33,42) .. (260.33,114) ;
\draw [shift={(185.25,60.19)}, rotate = 181.87] [fill={rgb, 255:red, 0; green, 0; blue, 0 }  ][line width=0.08]  [draw opacity=0] (8.93,-4.29) -- (0,0) -- (8.93,4.29) -- cycle    ;
\draw    (100,114) .. controls (143.33,182) and (213.33,183) .. (260.33,114) ;
\draw [shift={(185.71,165.08)}, rotate = 177.18] [fill={rgb, 255:red, 0; green, 0; blue, 0 }  ][line width=0.08]  [draw opacity=0] (8.93,-4.29) -- (0,0) -- (8.93,4.29) -- cycle    ;
\draw    (308.33,114) .. controls (337.33,46) and (409.33,46) .. (447.33,114) ;
\draw [shift={(382.39,63.51)}, rotate = 184.41] [fill={rgb, 255:red, 0; green, 0; blue, 0 }  ][line width=0.08]  [draw opacity=0] (8.93,-4.29) -- (0,0) -- (8.93,4.29) -- cycle    ;
\draw    (308.33,114) .. controls (343.33,176) and (409.33,176) .. (447.33,114) ;
\draw [shift={(382.86,160.2)}, rotate = 177.23] [fill={rgb, 255:red, 0; green, 0; blue, 0 }  ][line width=0.08]  [draw opacity=0] (8.93,-4.29) -- (0,0) -- (8.93,4.29) -- cycle    ;

\draw (164,175) node [anchor=north west][inner sep=0.75pt]   [align=left] {$\displaystyle \overline{\Gamma }_{1}$};
\draw (364,175) node [anchor=north west][inner sep=0.75pt]   [align=left] {$\displaystyle \overline{\Gamma }_{2}$};
\draw (171,23) node [anchor=north west][inner sep=0.75pt]   [align=left] {$\displaystyle \Gamma _{1}$};
\draw (371,26) node [anchor=north west][inner sep=0.75pt]   [align=left] {$\displaystyle \Gamma _{2}$};
\draw (166,81) node [anchor=north west][inner sep=0.75pt]   [align=left] {$\displaystyle \Omega $};
\draw (368,79) node [anchor=north west][inner sep=0.75pt]   [align=left] {$\displaystyle \Omega $};
\draw (164,127) node [anchor=north west][inner sep=0.75pt]   [align=left] {$\displaystyle \overline{\Omega }$};
\draw (366,123) node [anchor=north west][inner sep=0.75pt]   [align=left] {$\displaystyle \overline{\Omega }$};
\draw (75,121) node [anchor=north west][inner sep=0.75pt]   [align=left] {$\displaystyle -1$};
\draw (447.33,114) node [anchor=north west][inner sep=0.75pt]   [align=left] {$\displaystyle 1$};
\draw (253,122) node [anchor=north west][inner sep=0.75pt]   [align=left] {$\displaystyle x_{1}$};
\draw (296,122) node [anchor=north west][inner sep=0.75pt]   [align=left] {$\displaystyle x_{2}$};

\end{tikzpicture}

\caption{Regions $\Omega$, $\overline{\Omega }$ and jump contours of the RH problem for $S$ in Case (II)} \label{RHP for S in the merging case}

\end{figure}

In Case (III), there is a single jump point at $x$. Since $x$ is close to one of the endpoints, we open the lens along the interval $[-1,x]$ or $[x, 1]$. The transformation $T \mapsto S$ in \eqref{eq:T-S-map} is then defined according to Figure \ref{RHP for S in the edge}, which depicts the case where $x$ is close to 1. 

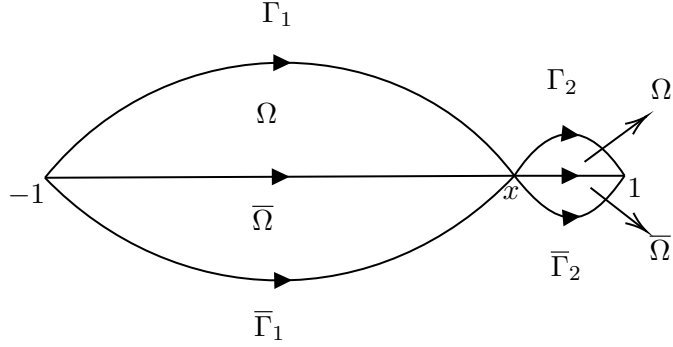
\begin{figure}[htbp]

\centering

\tikzset{every picture/.style={line width=0.75pt}} 

\begin{tikzpicture}[x=0.75pt,y=0.75pt,yscale=-1,xscale=1]

\draw    (100,111) -- (334.33,110) -- (389.33,110) ;
\draw [shift={(222.17,110.48)}, rotate = 179.76] [fill={rgb, 255:red, 0; green, 0; blue, 0 }  ][line width=0.08]  [draw opacity=0] (8.93,-4.29) -- (0,0) -- (8.93,4.29) -- cycle    ;
\draw [shift={(366.83,110)}, rotate = 180] [fill={rgb, 255:red, 0; green, 0; blue, 0 }  ][line width=0.08]  [draw opacity=0] (8.93,-4.29) -- (0,0) -- (8.93,4.29) -- cycle    ;
\draw    (100,111) .. controls (163.33,32) and (279.33,36) .. (334.33,110) ;
\draw [shift={(222.92,53.17)}, rotate = 180.24] [fill={rgb, 255:red, 0; green, 0; blue, 0 }  ][line width=0.08]  [draw opacity=0] (8.93,-4.29) -- (0,0) -- (8.93,4.29) -- cycle    ;
\draw    (100,111) .. controls (164.33,178) and (266.33,182) .. (334.33,110) ;
\draw [shift={(223.36,162.5)}, rotate = 178.85] [fill={rgb, 255:red, 0; green, 0; blue, 0 }  ][line width=0.08]  [draw opacity=0] (8.93,-4.29) -- (0,0) -- (8.93,4.29) -- cycle    ;
\draw    (334.33,110) .. controls (352.33,82) and (371.33,82) .. (389.33,110) ;
\draw [shift={(366.83,89.68)}, rotate = 186.48] [fill={rgb, 255:red, 0; green, 0; blue, 0 }  ][line width=0.08]  [draw opacity=0] (8.93,-4.29) -- (0,0) -- (8.93,4.29) -- cycle    ;
\draw    (334.33,110) .. controls (356.33,138) and (370.33,138) .. (389.33,110) ;
\draw [shift={(367.33,130.39)}, rotate = 174.88] [fill={rgb, 255:red, 0; green, 0; blue, 0 }  ][line width=0.08]  [draw opacity=0] (8.93,-4.29) -- (0,0) -- (8.93,4.29) -- cycle    ;
\draw    (369.33,103) -- (398.73,81.19) ;
\draw [shift={(400.33,80)}, rotate = 143.43] [color={rgb, 255:red, 0; green, 0; blue, 0 }  ][line width=0.75]    (10.93,-3.29) .. controls (6.95,-1.4) and (3.31,-0.3) .. (0,0) .. controls (3.31,0.3) and (6.95,1.4) .. (10.93,3.29)   ;
\draw    (372.33,116) -- (398.76,136.76) ;
\draw [shift={(400.33,138)}, rotate = 218.16] [color={rgb, 255:red, 0; green, 0; blue, 0 }  ][line width=0.75]    (10.93,-3.29) .. controls (6.95,-1.4) and (3.31,-0.3) .. (0,0) .. controls (3.31,0.3) and (6.95,1.4) .. (10.93,3.29)   ;

\draw (207,21) node [anchor=north west][inner sep=0.75pt]   [align=left] {$\displaystyle \Gamma _{1}$};
\draw (349,55) node [anchor=north west][inner sep=0.75pt]   [align=left] {$\displaystyle \Gamma _{2}$};
\draw (203,176) node [anchor=north west][inner sep=0.75pt]   [align=left] {$\displaystyle \overline{\Gamma }_{1}$};
\draw (351,146) node [anchor=north west][inner sep=0.75pt]   [align=left] {$\displaystyle \overline{\Gamma }_{2}$};
\draw (204,72) node [anchor=north west][inner sep=0.75pt]   [align=left] {$\displaystyle \Omega $};
\draw (401,60) node [anchor=north west][inner sep=0.75pt]   [align=left] {$\displaystyle \Omega $};
\draw (202,123) node [anchor=north west][inner sep=0.75pt]   [align=left] {$\displaystyle \overline{\Omega }$};
\draw (400.33,138) node [anchor=north west][inner sep=0.75pt]   [align=left] {$\displaystyle \overline{\Omega }$};
\draw (80,113) node [anchor=north west][inner sep=0.75pt]   [align=left] {$\displaystyle -1$};
\draw (327,114) node [anchor=north west][inner sep=0.75pt]   [align=left] {$\displaystyle x$};
\draw (389.33,110) node [anchor=north west][inner sep=0.75pt]   [align=left] {$\displaystyle 1$};

\end{tikzpicture}

\caption{Regions $\Omega$, $\overline{\Omega }$ and jump contours of the RH problem for $S$ in Case (III)} \label{RHP for S in the edge}

\end{figure}

\subsection{The Global Parametrix}

By the definition of $\varphi(z)$ in \eqref{varphi}, one can see $|\varphi(z)| > 1$ for $z$ bounded away from $[-1, 1]$. As a consequence, all the jumps for $S$ in \eqref{jumps for S} tend to the identity matrix exponentially as $N\to\infty$. This gives us the following global parametrix.

\begin{rhp}\label{rhp for Pinfty}
\hfill
\begin{itemize}
\item[(a)] $P^\infty: \mathbb{C}\setminus [-1, 1] \to \mathbb{C}^{2\times 2}$ is analytic.

\item[(b)] $P^\infty$ satisfies the following jump relations
 \begin{eqnarray*}
 \begin{array}{lr}
 P^\infty_+(x) = 
 \left\{
 \begin{array}{ll}
 P^\infty_-(x) J_2(x; \gamma_1+\gamma_2), & -1<x<x_1;\\
 P^\infty_-(x) J_2(x; \gamma_2), & x_1<x<x_2;\\
 P^\infty_-(x) J_2(x; 0), & x_2 < x <1,
 \end{array}
 \right.
 \end{array}
 \end{eqnarray*}
 where $J_2(x; \gamma)$ is defined in \eqref{eq:JS-jump-2}.

 \item[(c)] As $z\to\infty$, we have
 \begin{equation}
 P^{\infty}(z) = I + O(z^{-1}).
 \end{equation}

 \item[(d)] As $x\to x_j$ for $j=1, 2$, we have
 \begin{equation}
 P^{\infty}(z) = O(\log |z-x_j|).
 \end{equation}

 \item[(e)] As $z\to 1$, we have
 \begin{equation}
 P^{\infty}(z) = O\left(
                    \begin{array}{cc}
                      |z - 1|^{-1/4} & |z - 1|^{-1/4} \\
                      |z - 1|^{-1/4} & |z - 1|^{-1/4} \\
                    \end{array}
                  \right) (z-1)^{-\frac{\alpha}{2}\sigma_3}.
 \end{equation}

 \item[(e)] As $z\to -1$, we have
 \begin{equation}
 P^{\infty}(z) = O\left(
                    \begin{array}{cc}
                      |z + 1|^{-1/4} & |z + 1|^{-1/4} \\
                      |z + 1|^{-1/4} & |z + 1|^{-1/4} \\
                    \end{array}
                  \right) (z+1)^{-\frac{\beta}{2}\sigma_3}.
 \end{equation}
\end{itemize}
\end{rhp}

When $\gamma = 0$, the above RH problem is indeed the global parametrix studied in \cite[Sec. 5]{ABJ2004}, where the explicit solution is provided. The case $\gamma \neq 0$ is also considered in \cite[Sec. 5.4]{CG2021}. In our case, the solution is explicitly given by
\begin{equation}\label{Pinfty}
P^\infty (z) = D_{\infty}^{\sigma_3} Q(z) D(z)^{-\sigma_3},
\end{equation}
where
\begin{eqnarray}\label{Q}
Q(z) = \left(
         \begin{array}{cc}
           \frac{1}{2}(a(z) + a(z)^{-1}) & -\frac{1}{2i}(a(z) - a(z)^{-1}) \\
           \frac{1}{2i}(a(z) - a(z)^{-1}) & \frac{1}{2}(a(z) + a(z)^{-1}) \\
         \end{array}
       \right)
\end{eqnarray}
and
\begin{equation}
a(z) = \left(\frac{z+1}{z-1}\right)^{1/4}, \quad z\in \mathbb{C} \setminus [-1, 1]
\end{equation}
with $a(z) \sim 1$ as $z\to\infty$. The Szeg\H{o} function  $D(z)$ takes the form of
\begin{equation}\label{D}
D(z) = D_w(z) D_{t}(z) D_{\gamma}(z),
\end{equation}
where
\begin{eqnarray}
D_w(z) &=& \frac{(z-1)^{\alpha/2}(z+1)^{\beta/2}}{\varphi(z)^{(\alpha+\beta)/2}}, \\
D_{t}(z) &=&  \exp\left(\frac{(z^2-1)^{1/2}}{2\pi} \int_{-1}^1 \frac{t(x)}{\sqrt{1-x^2}} \frac{dx}{z-x}\right), \\
D_\gamma (z) &=& \exp\left(\frac{(z^2-1)^{1/2}}{\sqrt{2}}\sum_{j=1}^2 \gamma_j \int_{-1}^{x_j} \frac{1}{\sqrt{1-x^2}} \frac{dx}{z-x}\right) \label{Dgamma}
\end{eqnarray}
and $\displaystyle D_{\infty} = \lim_{z\to\infty} D(z)$ is a constant.

In Case (I), where the two jump points are separated and not close to the endpoints, the steepest descent analysis has been conducted in \cite{CG2021}. Therefore, we will apply the results in \cite{CG2021} to express the asymptotics of the Hankel determinant $D_N(x_1, x_2; \gamma_1, \gamma_2; w)$ in Proposition \ref{Asymptotics in the Separated Regime} below.

We will study Case (II) and (III) in more details in the following two subsections. In Case (II), where the two jump points $x_1$ and $x_2$ are close to each other, we do not open the lens along the interval $[x_1,x_2]$. Nevertheless, we keep the same global parametrix defined in \eqref{Pinfty} and construct a local parametrix near $x_1$ in terms of the Painlev\'{e} V functions. This local parametrix is defined in a neighbourhood that also encloses $x_2$. In Case (III), where only one jump point $x$ exists, the global parametrix in \eqref{Pinfty} is modified by setting $x_1=x_2=x$ and $\gamma_1+\gamma_2 = \gamma$. The local parametrix near $x$ will be constructed using confluent hypergeometric functions.

\subsection{Local Parametrices in the Merging Case}

In this subsection, we construct local parametrices for the merging case, i.e. the Case (II) illustrated in Figure \ref{RHP for S in the merging case}.
Let $\delta>0$ be a fixed positive number and $B(z_0, \delta):= \{ z \,| \, |z-z_0| < \delta\}$ be a neighbourhood of a given point $z_0$. We first consider the endpoints $\pm 1$ and look for a function $P(z)$ satisfying the following RH problem in $B(\pm 1, \delta)$.

\begin{rhp} \label{sec3.4-RHP1}
\hfill
\begin{itemize}
\item[(a)] $P: B(\pm 1, \delta) \setminus ([-1, 1] \cup \Gamma_1 \cup \overline{\Gamma_1} \cup \Gamma_2 \cup \overline{\Gamma_2})\to \mathbb{C}^{2\times 2}$ is analytic.
\item[(b)] $P$ satisfies the same jump relations as $S$ on $B(\pm 1, \delta)\cap ([-1, 1] \cup \Gamma_1 \cup \overline{\Gamma_1} \cup \Gamma_2 \cup \overline{\Gamma_2})$.
\item[(c)] For $z\in \partial B(\pm 1, \delta)$, we have the matching condition
\begin{equation} \label{sec3.3-matching1}
P(z)P^\infty(z)^{-1} = I+O(1/N) \quad \text{ as } N\to\infty.
\end{equation}
\item[(d)] As $z\to\pm 1$, $P(z)$ has the same asymptotic behaviours as $S(z)$.
\end{itemize}
\end{rhp}

It is well-known that the above RH problems can be solved in terms of the Bessel parametrix in \eqref{Phi-B-solution}; see the explicit construction in \cite[Sec. 6]{ABJ2004}.

We will focus on the local parametrix near $x_1$. Let us consider a neighbourhood of $B(x_1, \delta)$ with $\delta > 0$ , which encloses the other jump point $x_2$. We look for a function $P_{x_1}(z)$ satisfying the following RH problem in $B(x_1, \delta)$.

\begin{rhp} \label{rhp:local-merge}
\hfill
\begin{itemize}
\item[(a)] $P_{x_1}: B(x_1, \delta)\setminus ([-1, 1] \cup \Gamma_1 \cup \overline{\Gamma_1} \cup \Gamma_2 \cup \overline{\Gamma_2}) \to \mathbb{C}^{2\times 2}$ is analytic.
\item[(b)] $P_{x_1}$ satisfies the same jump condition as $S$ in $B(x_1, \delta) \cap (\mathbb{R}\cup \Gamma_1 \cup \overline{\Gamma_1} \cup \Gamma_2 \cup \overline{\Gamma_2})$.
\item[(c)] For $z\in\partial B(x_1, \delta)$, we have the matching condition
\begin{equation} \label{eq: p-p-infty-math}
P_{x_1}(z)P^\infty(z)^{-1} = I + O((\delta N)^{-1})
\end{equation}
as $N\to\infty$, uniformly for all relevant parameters and for $0<y\leq \delta / 2$.
\item[(d)] As $z\to x_j$, we have
\begin{equation}
P_{x_1}(z) = O(\log(z-x_j)).
\end{equation}
\end{itemize}
\end{rhp}

Using a similar idea as in \cite[Sec. 7]{CFL2021}, we solve the above RH problem explicitly in terms of the Painlev\'{e} V functions. First, recalling the definition of $\varphi(z)$ in \eqref{varphi}, let us introduce the following conformal mapping near $z = x_1$: 
\begin{equation} \label{lambdayz}
\lambda_y(z) = \pm \frac{2N}{s_{N, y}}(\log \varphi(z) - \log \varphi_{\pm}(x_1)), \quad \pm\Im z>0,
\end{equation}
where $s_{N, y}$ is the quantity measuring the distance between $x_2$ and $x_1$, defined as 
\begin{eqnarray}  \label{sNy} 
s_{N, y} =  2N(\log \varphi_+(x_2) - \log \varphi_+(x_1)), \qquad \textrm{with } y = x_2- x_1.
\end{eqnarray}
From the definition in \eqref{lambdayz}, it is easy to see that $\lambda_y(z)$ maps the two jump points $x_1$ and $x_2$ to $0$ and $1$, respectively. Moreover, for $x \in (-1, 1)\cap B(x_1, \delta)$,  we have
\begin{eqnarray*}
\lambda_{y, +}(x) = \frac{2N}{s_{N, y}}(\log \varphi_{+}(x) - \log \varphi_+(x_1)) = -\frac{2N}{s_{N, y}}(\log \varphi_{-}(x) - \log \varphi_{-}(x_1)) =\lambda_{y, -}(x).
\end{eqnarray*}
This implies that $\lambda_y(z)$ is analytic in $B(x_1, \delta)$. Moreover, it follows from \eqref{sNy} that there exists a constant $0<c<\delta$ such that
\begin{eqnarray}
s_{N, y} = -2Nyi\frac{1}{\sqrt{1-x_1^2}} + O(Ny^2), \quad \textrm{as } N\to\infty,
\end{eqnarray}
uniformly for $0< y \leq c$. This gives us
\begin{equation}
\lambda'_y(x_1) = \frac{2N}{s_{N, y}}(\log \varphi_+)'(x_1) = \frac{1}{y} + O(1).
\end{equation}
Then, the solution to the above RH problem is given below.

\begin{lemma}
Let $\varphi(z)$ and $w_J(z)$ be defined in \eqref{varphi} and \eqref{def:wJz},  and $\widehat{\Phi}_{PV}$ be the Painlev\'e V parametrix given in Appendix \ref{Appendix: Section: PV model RHP}. Then, the solution to RH problem \ref{rhp:local-merge} is given by
\begin{equation}\label{Pmerge}
P_{x_1}(z) = E_{N, y} (z) \widehat{\Phi}_{PV}(\lambda_y(z); s_{N, y})w_J(z)^{-\sigma_3 / 2}\varphi(z)^{-N\sigma_3},
\end{equation}
where $E_{N, y}(z)$ is an analytic function in $B(x_1, \delta)$ given as 
\begin{equation}\label{Emerge}
E_{N, y}(z) = P^\infty(z)w_J(z)^{\sigma_3 / 2} \widehat{\Phi}^\infty(\lambda_y(z))^{-1} \varphi_{+}(x_1)^{-N\sigma_3},
\end{equation}
with 
\begin{equation}\label{hatpsiinfty}
\widehat{\Phi}^{\infty}(\lambda) = \widehat{\Phi}^{\infty}(\lambda; s) = \left\{\begin{array}{ll}
e^{-\sqrt{2}\pi\gamma_2\sigma_3} |s|^{\frac{\gamma_1+\gamma_2}{\sqrt{2}i}\sigma_3} \lambda^{\frac{\gamma_1}{\sqrt{2}i}\sigma_3} (1-\lambda)^{\frac{\gamma_2}{\sqrt{2}i}\sigma_3}, & \Im \lambda<0, \\
|s|^{\frac{\gamma_1+\gamma_2}{\sqrt{2}i}\sigma_3} \lambda^{\frac{\gamma_1}{\sqrt{2}i}\sigma_3} (1-\lambda)^{\frac{\gamma_2}{\sqrt{2}i}\sigma_3}\sigma_3\sigma_1, & \Im \lambda>0.
\end{array}
\right.
\end{equation}
In the above formula, the principal branches are chosen such that $\arg \lambda, \arg(1-\lambda) \in (-\pi,\pi)$.
\end{lemma}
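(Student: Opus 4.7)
The claim is that the explicit formula \eqref{Pmerge} solves RH problem \ref{rhp:local-merge}. This is a standard local-parametrix verification in four steps: checking analyticity of $E_{N,y}$, checking the jumps of $P_{x_1}$, checking the endpoint behavior at $x_1, x_2$, and verifying the matching on $\partial B(x_1,\delta)$. The construction is engineered so that $E_{N,y}$ is analytic and the jumps of $P_{x_1}$ match those of $S$ by absorbing the Painlev\'e V model jumps into the factors $w_J(z)^{-\sigma_3/2}\varphi(z)^{-N\sigma_3}$. First I would confirm that $\lambda_y(z)$ is a conformal map from $B(x_1,\delta)$ onto a neighbourhood of $[0,1]$, sending $x_1 \mapsto 0$ and $x_2 \mapsto 1$, using the already-observed identity $\lambda_{y,+}(x) = \lambda_{y,-}(x)$ for $x\in(-1,1)\cap B(x_1,\delta)$. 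This ensures the jump contours of $\widehat{\Phi}_{PV}(\lambda_y(z);s_{N,y})$ are the images of those of $S$ in $B(x_1,\delta)$.

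Next, using the jumps of $\widehat{\Phi}_{PV}$ recorded in Appendix \ref{Appendix: Section: PV model RHP}, I would verify item (b). On the lens boundaries $\Gamma_j \cup \overline{\Gamma}_j$, the Painlev\'e V triangular jumps, conjugated by the single-valued $w_J(z)^{-\sigma_3/2}\varphi(z)^{-N\sigma_3}$, reproduce the jumps $J_1(z;\gamma)$ and $J_3(z;\gamma)$ from \eqref{eq:JS-jump-1} and \eqref{eq:JS-jump-3} with the correct value of $\gamma$ (i.e. $\gamma_1+\gamma_2$ left of $x_1$, $\gamma_2$ between $x_1$ and $x_2$, and $0$ right of $x_2$). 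On the real cut, the relation $w_{J,+}(x)w_{J,-}(x) = w_J(x)^2$ and $\varphi_+\varphi_- = 1$ from \eqref{phi+phi-}, together with the anti-diagonal Painlev\'e V jump on $(-\infty,0]\cup[1,\infty)$, produce precisely $J_2(x;\gamma)$. The segment $(0,1)$, which is not a jump contour for $\widehat{\Phi}_{PV}$, carries the jump $J_2(x;\gamma_2)$ coming entirely from $w_J^{-\sigma_3/2}\varphi^{-N\sigma_3}$, matching the RH problem for $S$ in Case (II) (cf.\ \eqref{T-jump-2}).

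Third, I would check that $E_{N,y}(z)$ defined in \eqref{Emerge} is analytic in $B(x_1,\delta)$. This is the delicate step. The apparent jumps of $P^\infty(z)w_J(z)^{\sigma_3/2}$ on $[-1,1]\cap B(x_1,\delta)$ and of $\widehat{\Phi}^\infty(\lambda_y(z))^{-1}$ on the preimage of $(-\infty,0]\cup[1,\infty)$ (which is $(-\infty,x_1]\cup[x_2,\infty)\cap B(x_1,\delta)$) must cancel piecewise on the three arcs $(-1,x_1)$, $(x_1,x_2)$, $(x_2,1)\cap B(x_1,\delta)$. The piecewise definition \eqref{hatpsiinfty} is tailored for exactly this cancellation: the $\sigma_3\sigma_1$ factor in the upper half-plane encodes the anti-diagonal jump of $P^\infty$, while the piecewise factors $e^{-\sqrt{2}\pi\gamma_2\sigma_3}$, $\lambda^{\gamma_1/(\sqrt{2}i)\sigma_3}$ and $(1-\lambda)^{\gamma_2/(\sqrt{2}i)\sigma_3}$ mirror the Szeg\H{o}-type factor $D_\gamma(z)$ from \eqref{Dgamma}. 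Any residual isolated singularities at $z = x_1, x_2$ coming from the powers $\lambda^{\gamma_1/(\sqrt{2}i)}$ and $(1-\lambda)^{\gamma_2/(\sqrt{2}i)}$ are at most logarithmic and hence removable, since $P^\infty$ has only logarithmic blowup there. I expect this cancellation check to be the main obstacle, as it requires tracking branches of $w_J^{\sigma_3/2}$, $D(z)^{\sigma_3}$, and $\varphi_+(x_1)^{-N\sigma_3}$ simultaneously.

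Finally, the matching condition \eqref{eq: p-p-infty-math} on $\partial B(x_1,\delta)$ follows from the large-argument asymptotics of $\widehat{\Phi}_{PV}(\lambda;s)\widehat{\Phi}^\infty(\lambda)^{-1} = I + O(1/(s\lambda))$ stated in the appendix. On $\partial B(x_1,\delta)$ one has $|\lambda_y(z)|\asymp \delta/|y|$ and $|s_{N,y}\lambda_y(z)| \asymp N\delta$, so
\begin{equation}
P_{x_1}(z)P^\infty(z)^{-1} = E_{N,y}(z)\bigl(I + O((N\delta)^{-1})\bigr)E_{N,y}(z)^{-1},
\end{equation}
and since $E_{N,y}$ is uniformly bounded on $\partial B(x_1,\delta)$ (independently of $y\in(0,\delta/2]$), this gives the required $O((N\delta)^{-1})$ error. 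Condition (d) on the logarithmic behavior at $x_j$ is inherited directly from the corresponding behavior of $\widehat{\Phi}_{PV}$ near $\lambda = 0$ and $\lambda = 1$.
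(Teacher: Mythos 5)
Your proposal follows the same four-step structure as the paper (analyticity of $E_{N,y}$, jump verification, endpoint behavior, matching on $\partial B(x_1,\delta)$), but there are two genuine errors in the details.

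First, your description of the jump on $(x_1,x_2)$ is wrong on both counts. You state that $(0,1)$ is ``not a jump contour for $\widehat{\Phi}_{PV}$,'' but in RH problem~\ref{Appendix: PV RHP} the segment $\widehat\Gamma_5 = [0,1]$ carries the upper-triangular jump $\widehat J_5 = \begin{pmatrix}1 & e^{\sqrt{2}\pi\gamma_2}\\0&1\end{pmatrix}$; without this factor you cannot produce the required $e^{\sqrt{2}\pi\gamma_2}$ in the $(1,2)$-entry of the $S$-jump. Moreover, in Case~(II) the lens is \emph{not} opened on $[x_1,x_2]$, so the jump of $S$ there is the oscillatory $T$-jump from \eqref{T-jump-2}, not the anti-diagonal $J_2(x;\gamma_2)$ of \eqref{eq:JS-jump-2} as you claim. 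The correct computation is: conjugating $\widehat J_5$ by $w_J(x)^{-\sigma_3/2}\varphi(x)^{-N\sigma_3}$ and using $\varphi_+\varphi_-=1$ reproduces precisely $\begin{pmatrix}\varphi_+^{-2N} & e^{\sqrt{2}\pi\gamma_2}w_J\\0&\varphi_-^{-2N}\end{pmatrix}$.

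Second, the matching step omits an essential cancellation. You invoke ``$\widehat{\Phi}_{PV}(\lambda;s)\widehat{\Phi}^\infty(\lambda)^{-1} = I + O(1/(s\lambda))$,'' but the asymptotics \eqref{Psi ashat} read $\widehat{\Phi}_{PV}(\lambda) = (I+O(\lambda^{-1}))\widehat{\Phi}^\infty(\lambda)e^{\pm\frac{s}{2}\lambda\sigma_3}$, so the product $\widehat{\Phi}_{PV}\,\widehat{\Phi}^{\infty\,-1}$ grows exponentially and is certainly not close to $I$. To make the matching go through you must first show that in $P_{x_1}(z)P^\infty(z)^{-1}$ the factors combine (for $\Im z<0$, where $\widehat\Phi^\infty$ is diagonal and commutes with $\varphi^{-N\sigma_3}$) to produce
\begin{equation*}
\varphi(z)^{-N\sigma_3}\varphi_+(x_1)^{-N\sigma_3} = \varphi(z)^{-N\sigma_3}\varphi_-(x_1)^{N\sigma_3} = e^{\frac{s_{N,y}}{2}\lambda_y(z)\sigma_3},
\end{equation*}
and analogously for $\Im z>0$ via $\sigma_1 a^{\sigma_3}=a^{-\sigma_3}\sigma_1$. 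Only then does the error term become $\widehat{\Phi}_{PV}(\lambda;s)\,e^{\mp\frac{s}{2}\lambda\sigma_3}\,\widehat{\Phi}^\infty(\lambda)^{-1} = I+O((N\delta)^{-1})$, which requires uniformity in $s$ in the spirit of \cite[Lemma~7.1]{CFL2021}, not just the pointwise $O(\lambda^{-1})$. As a minor point, the factors $\lambda^{\gamma_1/(\sqrt{2}i)}$, $(1-\lambda)^{\gamma_2/(\sqrt{2}i)}$ are bounded (not merely logarithmic) near $x_1,x_2$ because the exponents are purely imaginary for real $\gamma_j$; this is what makes $E_{N,y}(z)=O(1)$ there.
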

\begin{proof}
Let us first show the analyticity of $E_{N, y}$ in $B(x_1, \delta)$. From its definition, we only need to verify the analytic property of $E_{N, y}(z)$ across the interval $[-1,1] \cap B(x_1, \delta)$. It is straightforward to see that $E_{N, y}(x)_{-}^{-1} E_{N, y}(x)_+ = I$ for $ x \in (x_1-\delta, x_1+\delta)$. Furthermore, as $z \to x_1$, we have $\lambda_y(z) \to 0$. Since $\gamma_1$ is real, the factor $\lambda^{\frac{\gamma_1}{\sqrt{2}i}\sigma_3} $ in \eqref{hatpsiinfty} remains bounded as $z \to x_1$. Consequently, we find that $E_{N, y}(z) = O(1)$ as $z\to x_1$, indicating that $z = x_1$ is a removable singularity. Hence, $E_{N, y}(z)$ is analytic in $B(x_1, \delta)$.

Next, with \eqref{jump hatPsi} and the analyticity of $E_{N, y}$, it is easy to check that $P_{x_1}(z)$ constructed in \eqref{Pmerge} satisfies the same jump conditions as $S(z)$ in $B(x_1, \delta)$. 

Our final task is to verify the matching condition \eqref{eq: p-p-infty-math}. By combining \eqref{Pmerge} and \eqref{Emerge}, we obtain
\begin{equation*}
P_{x_1}(z) P^{\infty}(z)^{-1}  =  E_{N, y} (z) \widehat{\Phi}_{PV}(\lambda_y(z); s_{N, y})\varphi(z)^{-N\sigma_3} \widehat{\Phi}^\infty(\lambda_y(z))^{-1} \varphi_{+}(x_1)^{-N\sigma_3} E_{N, y}(z)^{-1}.
\end{equation*}
When $\Im z < 0$, note that $\widehat{\Phi}^{\infty}(\lambda(z))$ is a diagonal matrix (cf. \eqref{hatpsiinfty}), which implies that the products in $\varphi(z)^{-N\sigma_3} \widehat{\Phi}^\infty(\lambda_y(z))^{-1} \varphi_{+}(x_1)^{-N\sigma_3}$ commute. Moreover, it follows from \eqref{phi+phi-} and \eqref{lambdayz} that
\begin{eqnarray*}
\varphi(z)^{-N\sigma_3} \varphi_{+}(x_1)^{-N\sigma_3} = \varphi(z)^{-N\sigma_3} \varphi_{-}(x_1)^{N\sigma_3} = e^{\frac{s_{N, y}}{2} \lambda_y(z) \sigma_3}.
\end{eqnarray*}
Using the above two formulas, we get, for $\Im z < 0$,
\begin{eqnarray}
P_{x_1}(z) P^{\infty}(z)^{-1} 
= E_{N, y} (z) \widehat{\Phi}_{PV}(\lambda_y(z); s_{N, y}) e^{\frac{s_{N, y}}{2} \lambda_y(z) \sigma_3} \widehat{\Phi}^\infty(\lambda_y(z))^{-1} E_{N, y}(z)^{-1}.
\end{eqnarray}
For $\Im z > 0$, using the fact $\sigma_1 a^{\sigma_3} = a^{-\sigma_3} \sigma_1$, a similar computation gives us
\begin{eqnarray}
P_{x_1}(z) P^{\infty}(z)^{-1} = E_{N, y} (z) \widehat{\Phi}_{PV}(\lambda_y(z); s_{N, y}) e^{-\frac{s_{N, y}}{2} \lambda_y(z) \sigma_3} \widehat{\Phi}^\infty(\lambda_y(z))^{-1} E_{N, y}(z)^{-1}.
\end{eqnarray}
To approximate $\widehat{\Phi}_{PV}(\lambda_y(z); s_{N, y}) e^{\pm \frac{s_{N, y}}{2} \lambda_y(z) \sigma_3} \widehat{\Phi}^\infty(\lambda_y(z))^{-1}$ in the formulas above, we adopt an argument similar to that in \cite[Lemma 7.1]{CFL2021} to obtain
\begin{equation}
\widehat{\Phi}_{PV}(\lambda_y(z); s_{N, y}) e^{\mp \frac{s_{N, y}}{2} \lambda_y(z) \sigma_3} \widehat{\Phi}^\infty(\lambda_y(z))^{-1} = I+O((N\delta)^{-1}), \qquad N \to \infty,
\end{equation}
uniformly for $z\in \partial B(x_1, \delta)$, with $0<y< \delta/2$ and $x_1$ in a fixed compact subset of $(-1, 1)$. Regarding the behavior of $E_{N, y}(z)$ for $z\in \partial B(x_1, \delta)$, both $N$-depend terms, $\widehat{\Phi}^\infty(\lambda_y(z))$ and $\varphi_{+}(x_1)^{-N\sigma_3}$, are bounded as $N \to \infty$. Since $P^\infty (z)$ and $w_J(z)$ have no singularities on $\partial B(x_1, \delta)$, we conclude from the definition in \eqref{Emerge} that $E_{N, y}(z)^{\pm 1} = O(1)$ as $N \to \infty$ uniformly for $z \in \partial B(x_1, \delta)$. This, together with the formulas above, provides us with the desired matching condition \eqref{eq: p-p-infty-math}.

This finishes the proof of the lemma.
\end{proof}

With all the global and local parametrices constructed, we define the final transformation as follows:
\begin{eqnarray} \label{merging-R-def}
R(z) = \left\{\begin{array}{ll}
                S(z)P(z)^{-1} & z\in B(-1, \delta)\cup B(1, \delta), \\
                S(z)P_{x_1}(z)^{-1} & z\in B(x_1, \delta), \\
                S(z)P^\infty(z)^{-1} & z\in\mathbb{C}\setminus (B(x_1, \delta)\cup B(-1, \delta)\cup B(1, \delta)).
              \end{array}
\right.
\end{eqnarray}
It is straightforward to see that $R(z)$ satisfies the following RH problem.

\begin{rhp}
\hfill
\begin{itemize}
\item[(a)] $R: \mathbb{C}\setminus \Sigma_R \to \mathbb{C}^{2\times 2}$ is analytic, where 
\begin{equation}
\Sigma_R = \Sigma_S \cup \partial B(\pm 1, \delta) \cup \partial B(x_1, \delta) \setminus \{ [-1, 1] \cup B(\pm 1, \delta) \cup B(x_1, \delta)\}.
\end{equation}
The orientations on $\partial B(\pm 1, \delta)$ and $\partial B(x_1, \delta)$ are taken to be clockwise.

\item[(b)] $R_+(z) = R_-(z)J_R(z)$, where
\begin{eqnarray}
J_R(z) = \left\{\begin{array}{ll}
                P(z)P^\infty(z)^{-1} & z\in  \partial B(\pm 1, \delta) \\
                P_{x_1}(z)P^\infty(z)^{-1} & z\in \partial B(x_1, \delta) \\
                P^\infty(z)J_S(z)P^\infty(z)^{-1} & z\in\Sigma_R\setminus (B(x_1, \delta)\cup B(-1, \delta)\cup B(1, \delta))
              \end{array}
\right.
\end{eqnarray}
\item[(c)] As $z\to\infty$, we have
\begin{equation}
R(z) = I+O(z^{-1}).
\end{equation}
\end{itemize}
\end{rhp}

From \eqref{jumps for S}, one can see that, there exists a positive constant $c>0$ such that
\begin{equation*}
J_{R}(z) =  I+O\left(e^{-2c N}\right), \qquad z\in\Sigma_R\setminus (B(x_1, \delta)\cup B(-1, \delta)\cup B(1, \delta)).
\end{equation*}
For $z \in \partial B(x_1, \delta)\cup \partial B(\pm 1, \delta)$, it follows from \eqref{sec3.3-matching1} and \eqref{eq: p-p-infty-math} that
\begin{eqnarray}
J_{R}(z) =  I+O\left(\frac{1}{N \delta}\right), \quad z \in \partial B(x_1, \delta),  \qquad J_{R}(z) = I+O\left(\frac{1}{N}\right), \quad z \in \partial B(\pm 1, \delta).
\end{eqnarray}
Then, by the standard result for small-norm RH problems (see \cite[Section 7]{DeiftZhou}), we have
\begin{eqnarray} \label{Asym for R in merging case}
R(z) =  I + O\left(\frac{1}{N(|z|+1)}\right) \quad \textrm{and} \quad  R'(z) = O\left(\frac{1}{N(|z|+1)}\right), \quad \text{ as } N \to \infty,
\end{eqnarray}
uniformly for $z \in \mathbb{C} \setminus \Sigma_R$.

\subsection{Local Parametrices in the Edge Region} \label{sec:edge-analysis}

In this subsection, we construct local parametrix in the edge regime, i.e., the Case (III) illustrated in Figure \ref{RHP for S in the edge}. Without loss of generality, we only consider the case that the jump point $x$ is close to $1$. First, we look for a function $P_{-1}(z)$ satisfying the following RH problem in $B(-1, \delta)$.

\begin{rhp} \label{rhp for local parametrix near -1 in the edge case}
\hfill
\begin{itemize}
\item[(a)] $P_{-1}: B(-1, \delta) \setminus ([-1, 1] \cup \Gamma_1 \cup \overline{\Gamma}_1)\to \mathbb{C}^{2\times 2}$ is analytic.
\item[(b)] $P_{-1}$ satisfies the same jump relations as $S$ on $B(-1, \delta)\cap ([-1, 1] \cup \Gamma_1 \cup \overline{\Gamma}_1)$.
\item[(c)] For $z\in \partial B(-1, \delta)$, we have the matching condition
\begin{equation}\label{matching condition: P-1Pinfty}
P_{-1}(z)P^\infty(z)^{-1} = I+O(1/N), \qquad \textrm{as } N\to\infty.
\end{equation}

\item[(d)] As $z\to - 1$, $P_{-1}(z)$ has the same asymptotic behaviours as $S(z)$.
\end{itemize}
\end{rhp}

The construction of the local parametrix near $-1$ is the same as that in RH problem \ref{sec3.4-RHP1}.  As a consequence, the solution is also given in terms of the Bessel parametrix in Section \ref{section: RHP for Bessel}.

Next, we focus on the local parametrix near $z=1$, which encloses the jump point $x$. We look for a function $P_1(z)$ satisfying the following RH problem in $B(1, \delta)$.

\begin{rhp} \label{rhp of the local parametrix for the edge regime near 1}
\hfill
\begin{itemize}
\item[(a)] $P_{1}: B(1, \delta) \setminus ([-1, 1] \cup \Gamma_1 \cup \overline{\Gamma}_1 \cup \Gamma_2 \cup \overline{\Gamma}_2)\to \mathbb{C}^{2\times 2}$ is analytic.
\item[(b)] $P_{1}$ satisfies the same jump relations as $S$ on $B(1, \delta)\cap ([-1, 1] \cup \Gamma_1 \cup \overline{\Gamma}_1 \cup \Gamma_2 \cup \overline{\Gamma}_2)$.
\item[(c)] For $z\in \partial B(1, \delta)$, we have the matching condition
\begin{equation} \label{mathcing condition near 1}
P_{1}(z)P^\infty(z)^{-1} = I+O(1/N), \qquad \text{ as } N\to\infty.
\end{equation}

\item[(d)] As $z\to 1$, $P_{1}(z)$ has the same asymptotic behaviours as $S(z)$.
\end{itemize}
\end{rhp}

In order to solve the above RH problem, we need to construct a model RH problem.

\subsubsection{The Model RH Problem}\label{modelrhp}

\begin{figure}[htbp]

\centering

\tikzset{every picture/.style={line width=0.75pt}} 

\begin{tikzpicture}[x=0.75pt,y=0.75pt,yscale=-1,xscale=1]

\draw    (100,122) -- (300.17,122.57) ;
\draw [shift={(300.17,122.57)}, rotate = 0.16] [color={rgb, 255:red, 0; green, 0; blue, 0 }  ][fill={rgb, 255:red, 0; green, 0; blue, 0 }  ][line width=0.75]      (0, 0) circle [x radius= 3.35, y radius= 3.35]   ;
\draw [shift={(205.08,122.3)}, rotate = 180.16] [fill={rgb, 255:red, 0; green, 0; blue, 0 }  ][line width=0.08]  [draw opacity=0] (8.93,-4.29) -- (0,0) -- (8.93,4.29) -- cycle    ;
\draw    (300.17,122.57) -- (500.33,123.14) ;
\draw [shift={(500.33,123.14)}, rotate = 0.16] [color={rgb, 255:red, 0; green, 0; blue, 0 }  ][fill={rgb, 255:red, 0; green, 0; blue, 0 }  ][line width=0.75]      (0, 0) circle [x radius= 3.35, y radius= 3.35]   ;
\draw [shift={(405.25,122.87)}, rotate = 180.16] [fill={rgb, 255:red, 0; green, 0; blue, 0 }  ][line width=0.08]  [draw opacity=0] (8.93,-4.29) -- (0,0) -- (8.93,4.29) -- cycle    ;
\draw [shift={(300.17,122.57)}, rotate = 0.16] [color={rgb, 255:red, 0; green, 0; blue, 0 }  ][fill={rgb, 255:red, 0; green, 0; blue, 0 }  ][line width=0.75]      (0, 0) circle [x radius= 3.35, y radius= 3.35]   ;
\draw    (191.33,14.14) -- (300.17,122.57) ;
\draw [shift={(249.29,71.88)}, rotate = 224.89] [fill={rgb, 255:red, 0; green, 0; blue, 0 }  ][line width=0.08]  [draw opacity=0] (8.93,-4.29) -- (0,0) -- (8.93,4.29) -- cycle    ;
\draw    (185.33,227.14) -- (300.17,122.57) ;
\draw [shift={(246.45,171.49)}, rotate = 137.68] [fill={rgb, 255:red, 0; green, 0; blue, 0 }  ][line width=0.08]  [draw opacity=0] (8.93,-4.29) -- (0,0) -- (8.93,4.29) -- cycle    ;
\draw    (300.17,122.57) .. controls (351.33,61.14) and (437.33,59.14) .. (500.33,123.14) ;
\draw [shift={(405.5,76.15)}, rotate = 182.72] [fill={rgb, 255:red, 0; green, 0; blue, 0 }  ][line width=0.08]  [draw opacity=0] (8.93,-4.29) -- (0,0) -- (8.93,4.29) -- cycle    ;
\draw    (300.17,122.57) .. controls (347.33,180.14) and (448.33,181.14) .. (500.33,123.14) ;
\draw [shift={(405.22,166.07)}, rotate = 178.76] [fill={rgb, 255:red, 0; green, 0; blue, 0 }  ][line width=0.08]  [draw opacity=0] (8.93,-4.29) -- (0,0) -- (8.93,4.29) -- cycle    ;

\draw (288,143) node [anchor=north west][inner sep=0.75pt]   [align=left] {$\displaystyle -1$};
\draw (494,143) node [anchor=north west][inner sep=0.75pt]   [align=left] {$\displaystyle 0$};
\draw (228,29) node [anchor=north west][inner sep=0.75pt]   [align=left] {$\displaystyle \Gamma _{\Phi ,1}$};
\draw (222,197) node [anchor=north west][inner sep=0.75pt]   [align=left] {$\displaystyle \Gamma _{\Phi ,1}$};
\draw (382,44) node [anchor=north west][inner sep=0.75pt]   [align=left] {$\displaystyle \Gamma _{\Phi ,2}$};
\draw (376,176) node [anchor=north west][inner sep=0.75pt]   [align=left] {$\displaystyle \Gamma _{\Phi ,2}$};

\end{tikzpicture}

\caption{The jump contours of the RH problem for $\Phi$} \label{Figure: Model RHP}

\end{figure}

This model RH problem is similar to that in \cite[Sec. 6.2]{CFL2021}, but with a slightly different contour. This difference arises because the endpoint $z=1$ is a hard edge in the present work, in contrast to the soft edge in \cite{CFL2021}.

\begin{rhp} \label{model rhp for Phi}
\hfill
\begin{itemize}
\item[(a)] $\Phi = \Phi(\lambda; u)$ is analytic on $\mathbb{C}\setminus ((-\infty, 0]\cup \Gamma_{\Phi, 1}\cup \Gamma_{\Phi, 2})$; see Figure \ref{Figure: Model RHP}.

\item[(b)] On $(-\infty, 0]\cup \Gamma_{\Phi, 1}\cup \Gamma_{\Phi, 2}\setminus \{-1, 0\}$, $\Phi$ satisfies the following jump conditions:
    \begin{eqnarray} \label{Model-jump-Phi}
    \begin{array}{ll}
    \Phi_+(\lambda) = 
    \left\{
    \begin{array}{ll}
    \Phi_-(\lambda)\left(\begin{array}{ll} 1 & 0\\ e^{-\sqrt{2}\pi\gamma}e^{-4(\lambda u)^{1/2}}e^{\pm \alpha\pi i} & 1
     \end{array}\right) & \text{ for } \lambda\in\Gamma_{\Phi, 1}, \text{ and } \lambda \in \mathbb{C}^{\pm}, \\
     \Phi_-(\lambda)\left(\begin{array}{ll} 1 & 0\\ e^{-4(\lambda u)^{1/2}}e^{\pm \alpha\pi i} & 1
     \end{array}\right) & \text{ for } \lambda\in\Gamma_{\Phi, 2}, \text{ and } \lambda \in \mathbb{C}^{\pm}, \\
     \Phi_-(\lambda)\left(\begin{array}{ll} 0 & e^{\sqrt{2}\pi\gamma}\\ -e^{-\sqrt{2}\pi\gamma} & 0
     \end{array}\right) & \text{ for } \lambda\in (-\infty, -1),\\
     \Phi_-(\lambda)\left(\begin{array}{ll} 0 & 1\\ -1 & 0
     \end{array}\right) & \text{ for } \lambda \in (-1, 0),
     \end{array}
     \right.
    \end{array}
    \end{eqnarray}
   where principal branches are chosen in the square roots, and $u>0$ and $\gamma\in\mathbb{R}$ are parameters.

\item[(c)] As $\lambda\to\infty$,
\begin{equation}\label{Phiasym}
\Phi(\lambda) = (I+O(\lambda^{-1}))\left(\begin{array}{cc} 1 & 0\\ i\sqrt{2}\gamma & 1\end{array}\right)\lambda^{-\frac{1}{4}\sigma_3} B e^{-\sqrt{2}\pi \frac{\gamma}{2}\sigma_3},
\end{equation}
where the principal branches are chosen, and
\begin{equation}\label{A}
B = \frac{1}{\sqrt 2}\left(\begin{array}{cc} 1 & i\\ i & 1\end{array}\right).
\end{equation}

\item[(d)] As $\lambda\to -1$, $\Phi(\lambda) = O(\log |\lambda+1|)$.

\item[(e)] For $\alpha<0$, $\Phi(\lambda)$ has the following behaviour:
\begin{eqnarray}
\Phi(\lambda) = O\left(
                   \begin{array}{cc}
                     |\lambda|^{\alpha / 2} & |\lambda|^{\alpha / 2} \\
                     |\lambda|^{\alpha / 2} & |\lambda|^{\alpha / 2} \\
                   \end{array}
                 \right), \text{ as } \lambda\to 0.
\end{eqnarray}
For $\alpha=0$, $\Phi(\lambda)$ has the following behaviour:
\begin{eqnarray}
\Phi(\lambda) = O\left(
                   \begin{array}{cc}
                     \log |\lambda| & \log |\lambda| \\
                     \log |\lambda| & \log |\lambda| \\
                   \end{array}
                 \right), \text{ as } \lambda\to 0.
\end{eqnarray}
For $\alpha>0$, $\Phi(\lambda)$ has the following behaviour:
\begin{eqnarray}
\Phi(\lambda) = \left\{\begin{array}{ll}
O\left(
   \begin{array}{cc}
     |\lambda|^{\alpha/2} & |\lambda|^{-\alpha/2} \\
     |\lambda|^{\alpha/2} & |\lambda|^{-\alpha/2} \\
   \end{array}
 \right), & \text{as } \lambda\to 0 \text{ outside the lens,}\\
O\left(
   \begin{array}{cc}
     |\lambda|^{-\alpha/2} & |\lambda|^{-\alpha/2} \\
     |\lambda|^{-\alpha/2} & |\lambda|^{-\alpha/2} \\
   \end{array}
 \right), & \text{as } \lambda\to 0 \text{ inside the lens,}
\end{array}
\right.
\end{eqnarray}
\end{itemize}
\end{rhp}

Compared to classical parametrices, the above RH problem appears more complicated. Fortunately, to obtain asymptotics of the related Hankel determinants, an explicit expression for $\Phi(\lambda; u)$ is unnecessary; only its existence for sufficiently large $u$ is required. This existence can be established by analyzing the asymptotic behavior of the RH problem as $u \to +\infty$, which will be the focus of the rest of this subsection.

\smallskip
\noindent{\bf Global parametrix}
\smallskip

As $u \to +\infty$, the jump matrices in \eqref{Model-jump-Phi} tend to the identity matrix exponentially except for those on $(-\infty, 0]$. This gives us the following global parametrix.

\begin{rhp} 
\hfill
\begin{itemize}
\item[(a)] $M(\lambda)$ is analytic on $\mathbb{C}\setminus (-\infty, 0]$.

\item[(b)] $M$ satisfies the following jump conditions:
    \begin{eqnarray} 
     M_+(\lambda) = 
     \left\{
     \begin{array}{ll}
     M_-(\lambda)\left(\begin{array}{ll} 0 & e^{\sqrt{2}\pi\gamma}\\ -e^{-\sqrt{2}\pi\gamma} & 0
     \end{array}\right) \qquad   &\text{ for } \lambda\in (-\infty, -1),\\
     M_-(\lambda)\left(\begin{array}{ll} 0 & 1\\ -1 & 0
     \end{array}\right) \qquad   &\text{ for } \lambda\in(-1, 0). 
     \end{array}
     \right.
     \label{model-global-M-jump2}
    \end{eqnarray}

\item[(c)] As $\lambda\to\infty$,
\begin{equation} 
M(\lambda) = (I+O(\lambda^{-1}))\left(\begin{array}{cc} 1 & 0\\ i\sqrt{2}\gamma & 1\end{array}\right)\lambda^{-\frac{1}{4}\sigma_3} B e^{-\sqrt{2}\pi \frac{\gamma}{2}\sigma_3}.
\end{equation}

\end{itemize}
\end{rhp}

The solution to the above RH problem is given explicitly as follows
\begin{equation}\label{M}
M(\lambda) = \lambda^{-\frac{1}{4}\sigma_3} B\left(\frac{1+e^{-\pi i/2}\sqrt{\lambda}}{\sqrt{\lambda+1}}\right)^{-i\sqrt{2}\gamma \sigma_3}, \quad \lambda \in \mathbb{C}\setminus (-\infty, 0],
\end{equation}
where $B$ is defined in \eqref{A} and all root functions take their principal branches.

\smallskip
\noindent{\bf Local parametrix near $\lambda = 0$}
\smallskip

We look for a function $\hat{P}_0$ satisfying the following RH problem in $B(0, \delta)$.

\begin{rhp} \label{model rhp local parametrix near 0}
\hfill
\begin{itemize}
\item[(a)] $\hat{P}_{0}: B(0, \delta) \setminus ([-1, 0] \cup \Gamma_{\Phi,2})\to \mathbb{C}^{2\times 2}$ is analytic.
\item[(b)] $\hat{P}_{0}$ satisfies the same jump relations as $\Phi$ on $B(0, \delta)\cap ([-1, 0] \cup \Gamma_{\Phi, 2})$.
\item[(c)] For $z\in \partial B(0, \delta)$, we have the matching condition
\begin{equation} \label{sec3.5.1-matching1}
\hat{P}_{0}(\lambda)M(\lambda)^{-1} = I+O(u^{-1/2}),  \qquad \textrm{as } u \to +\infty.
\end{equation}

\item[(d)] As $\lambda\to 0$, $\hat{P}_{0}(\lambda)$ has the same asymptotic behaviours as $\Phi(z)$.
\end{itemize}
\end{rhp}

The solution to the above RH problem is given in terms of the Bessel parametrix in Section \ref{Appendix: Model RHP for Bessel}. More precisely, we have the following result.

\begin{lemma}
With $\Phi_{\text{Bes}}^{(\alpha)}$ defined in \eqref{Phi-B-solution}, the solution to RH problem \ref{model rhp local parametrix near 0} is given by
\begin{equation}\label{P0}
\hat{P}_0(\lambda) = \hat{E}_0(\lambda)\Phi_{\text{Bes}}^{(\alpha)}(\lambda u)e^{-2(\lambda u)^{1/2}\sigma_3},
\end{equation}
where $\hat{E}_0(\lambda)$ is an analytic function in $B(0, \delta)$ given as 
\begin{equation}\label{hatE0}
\hat{E}_0(\lambda) = M(\lambda)B^{-1}(4\pi^2\lambda u)^{\frac{1}{4} \sigma_3},
\end{equation}
the matrices $M(\lambda)$ and $B$ are defined in \eqref{A} and \eqref{M}, respectively, and all root functions take their principal branches.
\end{lemma}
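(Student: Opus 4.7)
The plan is to verify that the explicit $\hat P_0$ defined in \eqref{P0}--\eqref{hatE0} satisfies items (a)--(d) of RH problem \ref{model rhp local parametrix near 0} one by one. Once $\hat E_0$ is shown to be analytic in $B(0,\delta)$, the nontrivial analytic structure of $\hat P_0$ — its jumps inside the disk and its behavior at $\lambda=0$ — is inherited from the Bessel parametrix $\Phi_{\text{Bes}}^{(\alpha)}(\lambda u)$, suitably conjugated by the scalar factor $e^{-2(\lambda u)^{1/2}\sigma_3}$. The matching condition on $\partial B(0,\delta)$ then follows from the known large-argument asymptotics of $\Phi_{\text{Bes}}^{(\alpha)}$ through a computation deliberately designed to fit the structure of $M(\lambda)$ in \eqref{M}.

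The first and principal step is to establish analyticity of $\hat E_0(\lambda)=M(\lambda)B^{-1}(4\pi^2\lambda u)^{\sigma_3/4}$ in $B(0,\delta)$. Both $M$ (via its factor $\lambda^{-\sigma_3/4}$) and $(4\pi^2\lambda u)^{\sigma_3/4}$ have a branch cut meeting at $\lambda=0$, so I would first confirm that the jump of $M$ on $(-\delta,0)\subset(-1,0)$ is $M_-^{-1}M_+=\left(\begin{smallmatrix}0 & 1\\-1 & 0\end{smallmatrix}\right)$, consistent with \eqref{model-global-M-jump2}. Using the factorization $M(\lambda)=\lambda^{-\sigma_3/4}BD_\gamma(\lambda)$ together with the identity $g_+(\lambda)g_-(\lambda)=1$ for the scalar $g(\lambda)=(1+e^{-\pi i/2}\sqrt\lambda)/\sqrt{\lambda+1}$ appearing in \eqref{M}, the diagonal $D_\gamma$-factor drops out and the jump reduces to the matrix above. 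A direct matrix computation then gives $B\left(\begin{smallmatrix}0 & 1\\-1 & 0\end{smallmatrix}\right)B^{-1}=-i\sigma_3=e^{-i\pi\sigma_3/2}$, which precisely cancels the jump $e^{i\pi\sigma_3/2}$ of $(4\pi^2\lambda u)^{\sigma_3/4}$ across $(-\delta,0)$, so $\hat E_0$ extends continuously across the cut. The remaining isolated singularity at $\lambda=0$ is removable because $M(\lambda)=O(\lambda^{-\sigma_3/4})$ there is exactly compensated by the factor $(4\pi^2\lambda u)^{\sigma_3/4}$.

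With $\hat E_0$ analytic, the jump relations of $\hat P_0$ reduce to conjugating those of $\Phi_{\text{Bes}}^{(\alpha)}(\lambda u)$ by $e^{-2(\lambda u)^{1/2}\sigma_3}$. On the lens arcs $\Gamma_{\Phi,2}\cap\mathbb{C}^{\pm}$ the scalar factor is single-valued, so the conjugation multiplies the off-diagonal entry of the Bessel jump by $e^{-4(\lambda u)^{1/2}}$, recovering \eqref{Model-jump-Phi}. On $(-1,0)\cap B(0,\delta)$, using $(\lambda u)^{1/2}_{\pm}=\pm i|\lambda u|^{1/2}$, a short calculation shows that the conjugated Bessel jump returns to $\left(\begin{smallmatrix}0 & 1\\-1 & 0\end{smallmatrix}\right)$, as required. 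For the matching condition \eqref{sec3.5.1-matching1}, I would substitute the standard large-$z$ expansion of the Bessel parametrix, $\Phi_{\text{Bes}}^{(\alpha)}(z)=(4\pi^2 z)^{-\sigma_3/4}B\bigl(I+O(z^{-1/2})\bigr)e^{2z^{1/2}\sigma_3}$: the prefactor in $\hat E_0$ is tailored to cancel $(4\pi^2\lambda u)^{-\sigma_3/4}B$, while the right-multiplication by $e^{-2(\lambda u)^{1/2}\sigma_3}$ cancels the Bessel exponential, leaving $\hat P_0(\lambda)M(\lambda)^{-1}=I+O(u^{-1/2})$ uniformly on $\partial B(0,\delta)$. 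Item (d) is then immediate by transferring the known behavior of $\Phi_{\text{Bes}}^{(\alpha)}$ at zero, listed in Section \ref{Appendix: Model RHP for Bessel}, through the bounded factor $\hat E_0(\lambda)e^{-2(\lambda u)^{1/2}\sigma_3}$, so the three cases $\alpha<0$, $\alpha=0$, $\alpha>0$ are inherited verbatim.

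The main technical obstacle is the analyticity step: one has to track three competing branch cuts (from $\lambda^{-1/4}$ inside $M$, from $\sqrt{\lambda+1}$ inside the Szeg\H{o}-type factor $D_\gamma$ of $M$, and from $(4\pi^2\lambda u)^{1/4}$) and handle the conjugations by $B$, which do not respect diagonalization. Once this cancellation is in place, the remaining items are short and essentially formal, relying only on the standard properties of $\Phi_{\text{Bes}}^{(\alpha)}$ from the appendix.
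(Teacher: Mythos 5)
Your proposal is correct and follows the same line of argument as the paper: establish analyticity of $\hat E_0$ by verifying that its jumps cancel across $(-\delta,0)$ and that the singularity at $0$ is removable, transfer the jumps of $\hat P_0$ from the Bessel parametrix through the conjugation by $e^{-2(\lambda u)^{1/2}\sigma_3}$, and read off the matching condition from the large-argument Bessel asymptotics. Your explicit computations $g_+(\lambda)g_-(\lambda)=1$ for $g(\lambda)=(1+e^{-\pi i/2}\sqrt\lambda)/\sqrt{\lambda+1}$ and $B\left(\begin{smallmatrix}0 & 1\\-1 & 0\end{smallmatrix}\right)B^{-1}=-i\sigma_3=e^{-i\pi\sigma_3/2}$ correctly flesh out the jump cancellation that the paper only asserts. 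One caveat on the removability step: since $\lambda^{-\sigma_3/4}$ and $(4\pi^2\lambda u)^{\sigma_3/4}$ are separated by the non-diagonal factor $B\,g(\lambda)^{-i\sqrt2\gamma\sigma_3}B^{-1}$, the phrase ``exactly compensated'' overstates what the naive cancellation gives — the $(1,2)$ entry of $\hat E_0$ is a priori only $O(\lambda^{-1/2})$. That weaker bound, together with the absence of a jump across $(-\delta,0)$, already yields removability by Riemann's theorem (since $O(\lambda^{-1/2})=o(\lambda^{-1})$), which is precisely the argument the paper records; genuine boundedness of $\hat E_0$ near $0$ also holds but requires tracking the $\sqrt\lambda$ term in the expansion of $g(\lambda)^{-i\sqrt2\gamma\sigma_3}$, and is more than you need.
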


\begin{proof}
From \eqref{model-global-M-jump2} and \eqref{hatE0}, it follows that $\hat{E}_{0,+}(\lambda) = \hat{E}_{0,-}(\lambda)$ for $\lambda \in (-\delta, 0)$. Moreover, since $\hat{E}_0(\lambda) = O(\lambda^{-1/2})$ as $\lambda\to 0$, we conclude that $\hat{E}_0(\lambda) $ is analytic in $B(0, \delta)$. Next, recalling \eqref{eq:Besl-infty}, it is easy to see that $\hat{P}_0(\lambda)$ satisfies the same jump conditions as $\Phi$ on $B(0, \delta)\cap ([-1, 0] \cup \Gamma_{\Phi, 2})$.

Finally, using \eqref{eq:Besl-infty}, we have, as $u \to +\infty$,
\begin{eqnarray}
\hat{P}_0(\lambda) M(\lambda)^{-1} =  M(\lambda) B^{-1} \frac{1}{\sqrt{2}}
\left(
\begin{array}{cc}
    1+O((\lambda u)^{-1/2}) & i+O((\lambda u)^{-1/2}) \\
    i+O((\lambda u)^{-1/2}) & 1+O((\lambda u)^{-1/2}) \\
\end{array}
\right) M(\lambda)^{-1}
\end{eqnarray}
for $\lambda \in \partial B(0, \delta)$. With the definition of $M(\lambda)$ in \eqref{M}, we obtain \eqref{sec3.5.1-matching1}, which completes the proof of the lemma.
\end{proof}

\smallskip
\noindent{\bf Local parametrix near $\lambda = -1$}
\smallskip

We look for a function $\hat{P}_{-1}$ satisfying the following RH problem in $B(-1, \delta)$.

\begin{rhp} \label{model rhp local parametrix near -1}
\hfill
\begin{itemize}
\item[(a)] $\hat{P}_{-1}: B(-1, \delta) \setminus ((-\infty, 0] \cup \Gamma_{\Phi,1} \cup \Gamma_{\Phi, 2})\to \mathbb{C}^{2\times 2}$ is analytic.
\item[(b)] $\hat{P}_{-1}$ satisfies the same jump relations as $\Phi$ on $B(-1, \delta)\cap ((-\infty, 0] \cup \Gamma_{\Phi, 2} \cup \Gamma_{\Phi, 1})$.
\item[(c)] For $z\in \partial B(-1, \delta)$, we have the matching condition
\begin{equation} \label{sec3.5.1-matching2}
\hat{P}_{-1}(\lambda)M(\lambda)^{-1} = I+O(u^{-1/2}),  \qquad \textrm{as } u \to +\infty.
\end{equation}

\item[(d)] As $\lambda\to -1$, $\hat{P}_{-1}(\lambda)$ has the same asymptotic behaviours as $\Phi(z)$.
\end{itemize}
\end{rhp}

The solution to the above RH problem is given in terms of the confluent hypergeometric parametrix in Appendix \ref{Appendix: Section: HG model RHP}. To state the solution explicitly, we first define
\begin{equation}\label{hu}
h_u(\lambda) = 4e^{-\pi i/2} \left((\lambda u)^{1/2}-e^{-\frac{\pi i}{2}} u^{1/2}\right), \qquad \lambda \in \mathbb{C} \setminus (0, i \infty),
\end{equation}
with $\lambda^{1/2}$ is taken to be positive for $\lambda >0$. With this branch choice, we have $(-1)^{1/2} = e^{-\frac{\pi i}{2}}$ and $h_u(-1) = 0$. Furthermore, we get  
\begin{equation}\label{hasymp}
h_u(\lambda) = 2u^{1/2}(\lambda+1)\left(1+\frac{1}{4}(\lambda+1) + O((\lambda+1)^2)\right), \quad \textrm{as }  \lambda \to -1.
\end{equation}
This shows that $h_u(\lambda) $ defines a conformal mapping on $B(-1, \delta)$, which maps the region $\Im (\lambda +1) > 0$ to $\Im h_u > 0$.

Then, the solution to the above RH problem is given below.

\begin{lemma} \label{the solution to the model rhp}
Let $h_u$ be defined in \eqref{hu}, and define $\chi(\lambda)$ as
\begin{eqnarray}\label{chi}
\chi(\lambda) = \left\{\begin{array}{ll}
e^{\alpha \pi i /2}, & \Im \lambda > 0;\\
e^{-\alpha \pi i /2}, & \Im \lambda < 0.
\end{array}
\right.
\end{eqnarray}
Let $\Phi_{\text{HG}}$ be the solution to the model RH problem \ref{Appendix: HG model RHP}. Then, the solution to RH problem \ref{model rhp local parametrix near -1} is given by
\begin{equation}\label{P-1}
\hat{P}_{-1}(\lambda) = \hat{E}_{-1}(\lambda) \Phi_{\text{HG}}\left(h_u(\lambda); \beta=\frac{\gamma}{\sqrt{2}i}\right) e^{-2(\lambda u)^{1/2}\sigma_3}e^{-\sqrt{2}\pi\frac{\gamma}{4}\sigma_3} \chi(\lambda)^{\sigma_3},
\end{equation}
where $\hat{E}_{-1}$ is an analytic function in $B(-1, \delta)$ given as 
\begin{eqnarray}\label{E-1}
\hat{E}_{-1}(\lambda) = \left\{\begin{array}{ll} M(\lambda)e^{-\frac{\pi \alpha i}{2} \sigma_3} h_u(\lambda)^{\frac{\gamma}{\sqrt{2}i}\sigma_3} e^{2iu^{1/2}\sigma_3}, & \Im \lambda>0; \\ M(\lambda)\left(\begin{array}{cc} 0 & 1\\ -1 & 0 \end{array}\right)e^{-\frac{\pi \alpha i}{2} \sigma_3} h_u(\lambda)^{\frac{\gamma}{\sqrt{2}i}\sigma_3} e^{2iu^{1/2}\sigma_3}, & \Im\lambda <0,\end{array}\right. 
\end{eqnarray}
the matrix $M(\lambda)$ is defined in \eqref{M}, and all root functions take their principal branches.
\end{lemma}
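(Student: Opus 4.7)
The plan is to verify in turn the four defining properties of RH problem \ref{model rhp local parametrix near -1}: analyticity of $\hat{P}_{-1}$ in $B(-1,\delta)\setminus((-\infty,0]\cup\Gamma_{\Phi,1}\cup\Gamma_{\Phi,2})$, the jump conditions inherited from $\Phi$, the matching condition on $\partial B(-1,\delta)$, and the prescribed endpoint behaviour at $\lambda=-1$. The strategy is classical: take the large-argument asymptotics of $\Phi_{\text{HG}}$ as the blueprint, and choose the analytic prefactor $\hat{E}_{-1}$ so that its leading term reproduces the global parametrix $M(\lambda)$ on the boundary.

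First I would show that $\hat{E}_{-1}$ is analytic across $(-1-\delta,-1+\delta)\setminus\{-1\}$. On the segment $(-1-\delta,-1)$ one has $h_u(\lambda)\in(-\infty,0)$, so the factor $h_u(\lambda)^{\gamma/(\sqrt{2}i)\sigma_3}$ picks up a jump that, thanks to the choice $\beta=\gamma/(\sqrt{2}i)$, cancels exactly the jump $\operatorname{diag}(-e^{-\sqrt{2}\pi\gamma},-e^{\sqrt{2}\pi\gamma})$ of $M$ coming from \eqref{model-global-M-jump2}. On $(-1,0)$, the matrix $\bigl(\begin{smallmatrix}0&1\\-1&0\end{smallmatrix}\bigr)$ inserted into the lower half-plane definition of $\hat{E}_{-1}$ implements precisely the remaining jump of $M$. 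At $\lambda=-1$ the only potentially singular factor is $h_u(\lambda)^{\gamma/(\sqrt{2}i)\sigma_3}$, but since $\gamma\in\mathbb{R}$ the exponent is purely imaginary, so this factor is bounded and $\hat{E}_{-1}$ has at worst a removable singularity there.

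Second, I would verify the jump conditions of $\hat{P}_{-1}$ ray by ray in $B(-1,\delta)$. Using the standard jumps of $\Phi_{\text{HG}}$ recalled in Appendix \ref{Appendix: Section: HG model RHP}, and transporting them through the diagonal conjugation $e^{-2(\lambda u)^{1/2}\sigma_3}e^{-\sqrt{2}\pi\gamma\sigma_3/4}\chi(\lambda)^{\sigma_3}$, one recovers the matrices in \eqref{Model-jump-Phi}. The role of $\chi(\lambda)$ is to absorb the two signs $e^{\pm\alpha\pi i}$ into the lens jumps on $\Gamma_{\Phi,1}\cup\Gamma_{\Phi,2}$, while the scalar $e^{-\sqrt{2}\pi\gamma/4}$ is chosen so that the jump across $(-\infty,-1)$ produces exactly the prefactor $e^{\pm\sqrt{2}\pi\gamma}$ in the required off-diagonal entries.

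Third, for the matching condition I would use that $|h_u(\lambda)|\asymp\delta u^{1/2}\to\infty$ uniformly for $\lambda\in\partial B(-1,\delta)$, so the large-argument asymptotics of $\Phi_{\text{HG}}$ apply. Substituting these into \eqref{P-1} and simplifying with the definition of $\hat{E}_{-1}$, the leading order produces $M(\lambda)$ and the subleading correction is of order $O(h_u^{-1})=O(u^{-1/2})$, establishing \eqref{sec3.5.1-matching2}. The exponential $e^{2iu^{1/2}\sigma_3}$ in \eqref{E-1} is precisely what is needed to cancel the constant part of $e^{-2(\lambda u)^{1/2}\sigma_3}$ at $\lambda=-1$, so that the remaining expression is bounded uniformly in $u$. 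The main obstacle will be the branch-cut bookkeeping: the three multivalued objects $\lambda^{-1/4}$ in $M$, $(\lambda u)^{1/2}$ in the exponential dressing, and $h_u(\lambda)^{\gamma/(\sqrt{2}i)\sigma_3}$, each carry their own cut, and verifying their interplay simultaneously on $(-\infty,-1)$ and $(-1,0)$ demands careful case analysis in both half-planes. The endpoint behaviour in item (d), $\hat{P}_{-1}(\lambda)=O(\log|\lambda+1|)$, is then inherited directly from the logarithmic singularity of $\Phi_{\text{HG}}$ at the origin via the conformal change $h_u(-1)=0$ recorded in \eqref{hasymp}.
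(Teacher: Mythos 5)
Your proposal follows essentially the same route as the paper's proof: establish analyticity of $\hat{E}_{-1}$ by matching the jumps of $M(\lambda)$ and $h_u(\lambda)^{\gamma\sigma_3/(\sqrt{2}i)}$ across $(-1-\delta,-1+\delta)$ (using that $\gamma\in\mathbb{R}$ makes the origin a removable singularity), transport the jumps of $\Phi_{\text{HG}}$ through the diagonal dressing to obtain the jumps of $\Phi$, and derive the matching condition from the large-argument expansion of $\Phi_{\text{HG}}$ by observing that the branch choice in $h_u$ makes $-\tfrac{i}{2}h_u(\lambda)=2(\lambda u)^{1/2}-2iu^{1/2}$, so the factor $e^{2iu^{1/2}\sigma_3}$ built into $\hat{E}_{-1}$ absorbs the residual constant exponential. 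One small slip in the exposition: after peeling off $\bigl(\begin{smallmatrix}0&1\\-1&0\end{smallmatrix}\bigr)$, the extra diagonal jump of $M$ on $(-\infty,-1)$ that the $h_u$ factor must cancel is $e^{\mp\sqrt{2}\pi\gamma\sigma_3}$ (depending on the order of the factorization), not $\operatorname{diag}(-e^{-\sqrt{2}\pi\gamma},-e^{\sqrt{2}\pi\gamma})$, but this does not affect the soundness of your plan.
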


\begin{proof}
Let us first show the analyticity of $\hat{E}_{-1}(\lambda)$ in $B(-1, \delta)$. From its definition, we only need to verify that $\hat{E}_{-1}(\lambda)$ is analytic across the interval $(-\infty, -1] \cap B(-1, \delta)$. With the jump conditions of $M(\lambda)$ in \eqref{model-global-M-jump2}, it is easy to see that $\hat{E}_{-1}(x)_{-}^{-1} \hat{E}_{-1}(x)_+ = I$ across $(-1-\delta, -1+\delta)$. Furthermore, as $\lambda \to -1$, we have $h_u(\lambda) \to 0$. Since $\gamma$ is real, the factor $h_u(\lambda)^{\frac{\gamma}{\sqrt{2}i}\sigma_3}$ in \eqref{E-1} remains bounded as $\lambda \to -1$. Similar arguments can be used to show that $M(\lambda)$ is also bounded as $\lambda\to -1$. As a result, $\hat{E}_{-1}(\lambda)$ is of order $O(1)$ as $\lambda \to -1$, indicating that $\lambda = -1$ is a removable singularity. Hence $\hat{E}_{-1}(\lambda)$ is analytic in $B(-1, \delta)$. 

Next, recalling the jump conditions for $\Phi_{HG}$ in \eqref{HJumps}, a direct computation shows that $\hat{P}^{-1}(\lambda)$ satisfies the same jump conditions as $\Phi(\lambda)$ in $B(-1, \delta)$. 

Last, we will verify the matching condition \eqref{sec3.5.1-matching2}. Let us first consider the case where $\Im \lambda>0$. Recalling the asymptotics of $\Phi_{\text{HG}}$ in \eqref{Appendix: model RHP HG asymptotics}, we have from \eqref{P-1} that
\begin{eqnarray}
& & \hat{P}_{-1}(\lambda) M(\lambda)^{-1} =  M(\lambda) e^{-\frac{\pi \alpha i}{2}\sigma_3} h_u(\lambda)^{\frac{\gamma}{\sqrt{2}i}\sigma_3} e^{2iu^{1/2}\sigma_3}e^{-\sqrt{2}\pi\frac{\gamma}{4}\sigma_3} (I + O(h_u(\lambda)^{-1})) \nonumber\\
& & \qquad \qquad  \times h_u(\lambda)^{-\frac{\gamma}{\sqrt{2}i}\sigma_3} e^{-\frac{i h_u(\lambda)}{2}\sigma_3} e^{\frac{\sqrt{2}\pi\gamma}{2}\sigma_3} e^{-2(\lambda u)^{1/2}\sigma_3} e^{-\sqrt{2}\pi\frac{\gamma}{4}\sigma_3} e^{\frac{\pi \alpha i}{2}\sigma_3} M(\lambda)^{-1}. \label{P-1M-1}
\end{eqnarray}
Note that in the definition of $h_u(\lambda)$ in \eqref{hu}, we choose the branch cut of $\lambda^{1/2}$ along $(0, i \infty)$, while the principal branch is used in the formula above. This gives us
\begin{equation}
-\frac{i h_u(\lambda)}{2} = 2(\lambda u)^{1/2} - 2iu^{1/2}, \quad \lambda \in B(-1, \delta) \cap \Im \lambda > 0.
\end{equation}
Combining \eqref{hasymp} with the above two formulas, we get, as $u \to +\infty$,
\begin{multline}
\hat{P}_{-1}(\lambda) M(\lambda)^{-1} =  M(\lambda) e^{-\frac{\pi \alpha i}{2}\sigma_3} h_u(\lambda)^{\frac{\gamma}{\sqrt{2}i}\sigma_3} e^{2iu^{1/2}\sigma_3} e^{-\sqrt{2}\pi\frac{\gamma}{4}\sigma_3} (I + O(u^{-1/2})) \\
 \times h_u(\lambda)^{-\frac{\gamma}{\sqrt{2}i}\sigma_3} e^{-2iu^{1/2}\sigma_3}  e^{\sqrt{2}\pi\frac{\gamma}{4}\sigma_3} e^{\frac{\pi \alpha i}{2}\sigma_3} M(\lambda)^{-1}, 
\end{multline}
for $\lambda \in B(-1, \delta) \cap \Im \lambda > 0 $. Since $h_u(\lambda)^{\frac{\gamma}{\sqrt{2}i}\sigma_3} e^{2iu^{1/2}\sigma_3}$ is bounded as $u \to +\infty$, the above formula yields \eqref{sec3.5.1-matching2} for $\Im \lambda > 0$. The derivation for the case $\Im \lambda < 0 $ is similar, and we omit the details here.

This finishes the proof of the lemma.
\end{proof}

\smallskip
\noindent{\bf Small-norm RH problem}
\smallskip

Now we define the final transformation as
\begin{eqnarray}\label{Xi}
\Xi(\lambda) = \left\{\begin{array}{ll} \Phi(\lambda)M^{-1}(\lambda) & \text{for } \lambda\in \mathbb{C}\setminus (B(0, \delta)\cup B(-1, \delta)),\\
\Phi(\lambda)\hat{P}_0^{-1}(\lambda) & \text{for } \lambda\in B(0, \delta),\\
\Phi(\lambda)\hat{P}_{-1}^{-1}(\lambda) & \text{for } \lambda\in B(-1, \delta). \end{array}\right.
\end{eqnarray}
It is straightforward to see that $\Xi(\lambda)$ satisfies the following RH problem.

\begin{figure}[htbp]

\centering

\tikzset{every picture/.style={line width=0.75pt}} 

\begin{tikzpicture}[x=0.75pt,y=0.75pt,yscale=-1,xscale=1]

\draw   (236,140.17) .. controls (236,121.85) and (250.85,107) .. (269.17,107) .. controls (287.48,107) and (302.33,121.85) .. (302.33,140.17) .. controls (302.33,158.48) and (287.48,173.33) .. (269.17,173.33) .. controls (250.85,173.33) and (236,158.48) .. (236,140.17) -- cycle ;
\draw    (242.33,118.8) -- (250.33,113.8) ;
\draw [shift={(250.57,113.65)}, rotate = 147.99] [fill={rgb, 255:red, 0; green, 0; blue, 0 }  ][line width=0.08]  [draw opacity=0] (8.93,-4.29) -- (0,0) -- (8.93,4.29) -- cycle    ;
\draw  [color={rgb, 255:red, 0; green, 0; blue, 0 }  ,draw opacity=1 ][fill={rgb, 255:red, 0; green, 0; blue, 0 }  ,fill opacity=1 ] (265.21,140.17) .. controls (265.21,139.07) and (266.1,138.19) .. (267.19,138.19) .. controls (268.28,138.19) and (269.17,139.07) .. (269.17,140.17) .. controls (269.17,141.26) and (268.28,142.14) .. (267.19,142.14) .. controls (266.1,142.14) and (265.21,141.26) .. (265.21,140.17) -- cycle ;
\draw   (443.02,141.14) .. controls (443.02,122.83) and (457.87,107.98) .. (476.19,107.98) .. controls (494.51,107.98) and (509.36,122.83) .. (509.36,141.14) .. controls (509.36,159.46) and (494.51,174.31) .. (476.19,174.31) .. controls (457.87,174.31) and (443.02,159.46) .. (443.02,141.14) -- cycle ;
\draw  [color={rgb, 255:red, 0; green, 0; blue, 0 }  ,draw opacity=1 ][fill={rgb, 255:red, 0; green, 0; blue, 0 }  ,fill opacity=1 ] (474.21,139.17) .. controls (474.21,138.07) and (475.1,137.19) .. (476.19,137.19) .. controls (477.28,137.19) and (478.17,138.07) .. (478.17,139.17) .. controls (478.17,140.26) and (477.28,141.14) .. (476.19,141.14) .. controls (475.1,141.14) and (474.21,140.26) .. (474.21,139.17) -- cycle ;
\draw    (296.33,118.8) .. controls (330.33,79.8) and (417.33,77.8) .. (446.33,124.8) ;
\draw [shift={(378.53,89.76)}, rotate = 182.01] [fill={rgb, 255:red, 0; green, 0; blue, 0 }  ][line width=0.08]  [draw opacity=0] (10.72,-5.15) -- (0,0) -- (10.72,5.15) -- (7.12,0) -- cycle    ;
\draw    (295.33,162.8) .. controls (320.33,201.8) and (416.33,205.8) .. (447.33,159.8) ;
\draw [shift={(377.46,192.95)}, rotate = 178.06] [fill={rgb, 255:red, 0; green, 0; blue, 0 }  ][line width=0.08]  [draw opacity=0] (8.93,-4.29) -- (0,0) -- (8.93,4.29) -- cycle    ;
\draw    (496.33,113.8) -- (502.33,119.8) ;
\draw [shift={(502.87,120.34)}, rotate = 225] [fill={rgb, 255:red, 0; green, 0; blue, 0 }  ][line width=0.08]  [draw opacity=0] (8.93,-4.29) -- (0,0) -- (8.93,4.29) -- cycle    ;
\draw    (132.33,38.8) -- (239,125) ;
\draw [shift={(189.56,85.04)}, rotate = 218.94] [fill={rgb, 255:red, 0; green, 0; blue, 0 }  ][line width=0.08]  [draw opacity=0] (8.93,-4.29) -- (0,0) -- (8.93,4.29) -- cycle    ;
\draw    (131.33,240.8) -- (241.33,156.8) ;
\draw [shift={(190.31,195.77)}, rotate = 142.63] [fill={rgb, 255:red, 0; green, 0; blue, 0 }  ][line width=0.08]  [draw opacity=0] (8.93,-4.29) -- (0,0) -- (8.93,4.29) -- cycle    ;

\draw (254,140) node [anchor=north west][inner sep=0.75pt]   [align=left] {$\displaystyle -1$};
\draw (476.19,139.17) node [anchor=north west][inner sep=0.75pt]   [align=left] {$\displaystyle 0$};
\draw (358,55) node [anchor=north west][inner sep=0.75pt]   [align=left] {$\displaystyle \Gamma _{\Phi ,\ 2}$};
\draw (359,202) node [anchor=north west][inner sep=0.75pt]   [align=left] {$\displaystyle \Gamma _{\Phi ,\ 2}$};
\draw (183,48) node [anchor=north west][inner sep=0.75pt]   [align=left] {$\displaystyle \Gamma _{\Phi ,\ 1}$};
\draw (195,213) node [anchor=north west][inner sep=0.75pt]   [align=left] {$\displaystyle \Gamma _{\Phi ,\ 1}$};

\end{tikzpicture}

\caption{The jump contours of the RH problem for $\Xi$} \label{Figure: RHP for Xi}

\end{figure}

\begin{rhp} \label{rhp:xi}
\hfill
\begin{itemize}
\item[(a)] $\Xi: \mathbb{C}\setminus \Sigma_{\Xi}\to\mathbb{C}$ is analytic, where $\Sigma_{\Xi}$ and its orientations are depicted in Figure \ref{Figure: RHP for Xi}.
\item[(b)] $
\Xi_+(\lambda) = \Xi_-(\lambda) J_{\Xi}(\lambda),
$
with
\begin{eqnarray} \label{eq:j-xi-jump}
{\small
J_{\Xi}(\lambda) = \left\{\begin{array}{ll}
\hat{P}_0(\lambda)M(\lambda)^{-1}, & \lambda\in\partial B(0, \delta),\\
\hat{P}_{-1}(\lambda)M(\lambda)^{-1}, & \lambda\in\partial B(-1, \delta),\\
M(\lambda)\left(\begin{array}{cc} 1 & 0\\ e^{-\sqrt{2}\pi\gamma} e^{-4(\lambda u)^{1/2}} e^{\pm \alpha \pi i} & 1\end{array}\right)M(\lambda)^{-1}, & \lambda\in \Gamma_{\Phi, 1} \setminus \overline{B(-1, \delta)},\\
M(\lambda)\left(\begin{array}{cc} 1 & 0\\
e^{-4(\lambda u)^{1/2}} e^{\pm \alpha \pi i} & 1\end{array}\right)M(\lambda)^{-1}, &  \lambda\in \Gamma_{\Phi, 2} \setminus (\overline{B(-1, \delta)} \cup \overline{B(0, \delta)}).
\end{array}\right.}
\end{eqnarray}
\item[(c)] As $\lambda\to\infty$,
\begin{equation}
\Xi(\lambda) = I+O(\lambda^{-1}).
\end{equation}
\end{itemize}
\end{rhp}

It is easy to see from \eqref{eq:j-xi-jump} that, there exists a positive constant $c>0$ such that
\begin{equation}
J_{\Xi}(\lambda)= I+O(e^{-cu^{1/2}}), \quad u \to +\infty
\end{equation}
for $\lambda\in\Sigma_{\Xi}\setminus (\partial B(0, \delta) \cup\partial B(-1, \delta))$. Moreover, the matching conditions in \eqref{sec3.5.1-matching1} and \eqref{sec3.5.1-matching2} give us
\begin{equation}
J_{\Xi}(\lambda) = I+O(u^{-1/2}), \quad u \to +\infty
\end{equation}
for $\lambda\in\partial B(0, \delta) \cup\partial B(-1, \delta)$. Thus, the RH problem \ref{rhp:xi} is a small-norm RH problem. Consequently, its solution $\Xi(\lambda)$ exists and is unique for sufficiently large $u$. Furthermore, we have
\begin{eqnarray}
\Xi(\lambda) = I+O((1+|\lambda|)^{-1}u^{-1/2})  \quad \textrm{and} \quad 
\Xi'(\lambda) = O((1+|\lambda|)^{-1}u^{-1/2}),\qquad u\to+\infty \label{Xiasymp1}
\end{eqnarray}
uniformly for $\lambda\in\mathbb{C}\setminus \Sigma_{\Xi}$. With the transformation in \eqref{Xi}, we also conclude that the solution to the model RH problem \ref{model rhp for Phi} exists for sufficiently large $u$.

\subsubsection{Construction of the Local Parametrix near $1$}

Using the model RH problem constructed in the previous subsection, we now solve the RH problem \ref{rhp of the local parametrix for the edge regime near 1}.

We first define
\begin{eqnarray}\label{f}
f(z) = \frac{1}{4}(\log \varphi(z))^2,
\end{eqnarray}
where $\varphi(z)$ is given in \eqref{varphi}. A direct computation yields
\begin{eqnarray} \label{eq:f-expan}
f(z) = \frac{1}{2}(z-1) - \frac{1}{12}(z-1)^2 + O((z-1)^3), \quad \text{ as } z\to 1,
\end{eqnarray}
which shows that $f(z)$ is a conformal map in a neighborhood of $z=1$. Denote
\begin{equation} \label{def:un-x}
u_{N, x} = -N^2 f(x) = -\frac{N^2}{4} (\log \varphi(z))^2,
\end{equation}
where $x$ is the jump point in Case (III); see Figure \ref{RHP for S in the edge} for an illustration. Obviously, we have 
\begin{equation}\label{u}
u_{N, x} = O(N^2(1-x)), \quad \text{ as } N\to\infty,
\end{equation}
uniformly for $x\in (1-\varepsilon, 1-N^{-2}\log\log N)$, so that $u_{N, x} \to \infty$ as $N\to \infty$ for $x$ in this regime.

We further define
\begin{equation}\label{lambdadef}
\lambda_{x}(z) = -\frac{f(z)}{f(x)},
\end{equation}
which is also a conformal map near $z = 1$, mapping $z=1$ to $0$ and $z =x$ to $-1$, respectively. Moreover, we have the following approximation
\begin{equation}\label{lambda}
\lambda_{x}(z) = O((1-x)^{-1}), \quad \text{ as } x\to 1,
\end{equation}
uniformly for $z\in\partial B(1, \delta)$.  We also define 
\begin{equation} \label{W(z)}
W(z) = ((z-1)^\alpha (z+1)^\beta e^{t(z)})^{1/2}, \quad \text{ for } z\in \mathbb{C} \setminus (-\infty, 1],
\end{equation}
where the branch of the square root is chosen such that $W(z)>0$ for $z>1$. With this definition, we have
\begin{eqnarray} \label{eq:W-wj-relation}
W^2(z) = \left\{\begin{array}{ll}
                  e^{\alpha\pi i} w_J(z), & \Im z>0; \\
                  e^{-\alpha\pi i} w_J(z), & \Im z<0,
                \end{array}
\right.
\end{eqnarray}
where $w_J(z)$ is given in \eqref{def:wJz}.

Now we choose the jump contour $\Sigma_S$ of $S$ near $1$ so that  $\lambda_x$ maps $\Sigma_S\cap B(1, \delta)$ to the jump contour $\Gamma_{\Phi,1}$ and $\Gamma_{\Phi, 2}$ from the RH problem \ref{model rhp for Phi}. The solution to the RH problem \ref{rhp of the local parametrix for the edge regime near 1} is given by the following lemma.

\begin{lemma}
Let $\lambda_x(z)$ and $W(z)$ be defined in \eqref{lambdadef} and \eqref{W(z)},  and $\Phi$ be the solution to the model RH problem \ref{model rhp for Phi}. Then, the solution to RH problem \ref{rhp of the local parametrix for the edge regime near 1} is given by
\begin{equation}\label{P}
P_{1}(z) = E_{1}(z)\Phi(\lambda_x(z), u_{N, x}) W(z)^{-\sigma_3},
\end{equation}
where  $E_{1}(z)$ is an analytic function in $B(1, \delta)$ defined as
\begin{equation}\label{E}
E_{1}(z) = P^{\infty}(z) W(z)^{\sigma_3} M(\lambda_x(z))^{-1},
\end{equation}
with $P^{\infty}(z)$ and $M(\lambda)$ given in \eqref{Pinfty} and \eqref{M}, respectively. 
\end{lemma}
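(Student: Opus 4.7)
The plan is to verify, in order, the four defining properties of RH problem~\ref{rhp of the local parametrix for the edge regime near 1} for the proposed expression~\eqref{P}. As a preliminary step, I would first establish that the prefactor $E_{1}(z)$ defined in~\eqref{E} is analytic in $B(1,\delta)$. This is the most delicate step: one must show that the jumps of $P^\infty(z)$ on $(-1,x)\cup(x,1)$, the jumps of $W(z)^{\sigma_3}$ across $[-1,1]$ arising from the branch of the square root in~\eqref{W(z)} (which by~\eqref{eq:W-wj-relation} contributes a factor $e^{\alpha\pi i\sigma_3}$), and the jumps of $M(\lambda_{x}(z))^{-1}$ induced by~\eqref{model-global-M-jump2} through the conformal map $\lambda_{x}$ (which sends $z=1$ to $0$ and $z=x$ to $-1$) all cancel pairwise across $[-1,1]\cap B(1,\delta)$. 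The only remaining potential singularities are at $z=1$ and $z=x$; at each of these $\lambda_{x}(z)$ lands on a branch point of $M$, and the Jacobi-type prefactors in $P^\infty$ and $W^{\sigma_3}$ combine with the $\lambda^{\pm 1/4}$ singularity of $M$ to yield only a bounded contribution. Hence both points are removable, and $E_{1}$ extends analytically to all of $B(1,\delta)$.

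Once analyticity of $E_{1}$ is settled, property (b) follows immediately: the jumps of $P_{1}$ reduce to those of $\Phi(\lambda_{x}(z);u_{N,x})W(z)^{-\sigma_3}$. A direct computation using the jump structure~\eqref{Model-jump-Phi}, the identity $-4(\lambda_{x}(z)u_{N,x})^{1/2}=-2N\log\varphi(z)$ coming from \eqref{f}, \eqref{def:un-x}, and \eqref{lambdadef}, and the relation~\eqref{eq:W-wj-relation} between $W^{2}$ and $w_{J}$, then reproduces precisely the oscillatory factor $\varphi(z)^{-2N}$ and the modified-Jacobi entries appearing in~\eqref{eq:JS-jump-1}--\eqref{eq:JS-jump-3}, i.e.\ the jump matrices of $S$ on $B(1,\delta)$.

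The matching condition~\eqref{mathcing condition near 1} is where the quantitative estimate enters. On $\partial B(1,\delta)$, by~\eqref{lambda} the image $\lambda_{x}(z)$ has magnitude of order $(1-x)^{-1}$, so for $x$ sufficiently close to $1$ it lies outside $B(0,\delta)\cup B(-1,\delta)$; the transformation~\eqref{Xi} therefore gives $\Phi(\lambda_{x}(z);u_{N,x})=\Xi(\lambda_{x}(z))M(\lambda_{x}(z))$. Rewriting~\eqref{E} as $P^\infty(z)=E_{1}(z)M(\lambda_{x}(z))W(z)^{-\sigma_3}$, one obtains
\begin{equation*}
P_{1}(z)P^\infty(z)^{-1}=E_{1}(z)\,\Xi(\lambda_{x}(z))\,E_{1}(z)^{-1}.
\end{equation*}
Combining~\eqref{Xiasymp1} with~\eqref{u} and~\eqref{lambda},
\begin{equation*}
\Xi(\lambda_{x}(z))-I = O\bigl((1+|\lambda_{x}(z)|)^{-1}u_{N,x}^{-1/2}\bigr) = O\bigl((1-x)\cdot N^{-1}(1-x)^{-1/2}\bigr) = O\bigl(N^{-1}(1-x)^{1/2}\bigr),
\end{equation*}
which yields the required $O(1/N)$ bound uniformly in $x$ in the relevant range, since $E_{1}^{\pm 1}$ are bounded on $\partial B(1,\delta)$. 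Property (d) near $z=1$ is then inherited from the local behavior of $\Phi(\lambda)$ at $\lambda=0$ recorded in RH problem~\ref{model rhp for Phi}(e), which is of Bessel type and thus reproduces the hard-edge behavior of $S$. The main obstacle is the careful bookkeeping of branch cuts and removable singularities to show analyticity of $E_{1}$; beyond that, the matching estimate reduces cleanly to the small-norm bound~\eqref{Xiasymp1} for $\Xi$.
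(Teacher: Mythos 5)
Your overall plan---verify analyticity of $E_1$, the jump conditions, and the matching condition, and package the matching estimate as $P_1(z)P^\infty(z)^{-1}=E_1(z)\,\Xi(\lambda_x(z))\,E_1(z)^{-1}$ on $\partial B(1,\delta)$---is the same as the paper's. The gap is in your final step: you assert that $E_1^{\pm 1}$ are bounded on $\partial B(1,\delta)$, and this is false uniformly in $x$. From \eqref{M}, $M(\lambda)=\lambda^{-\sigma_3/4}B(\cdots)^{-i\sqrt 2\gamma\sigma_3}$, and since $\lambda_x(z)=O((1-x)^{-1})$ for $z\in\partial B(1,\delta)$ by \eqref{lambda}, one has $M(\lambda_x(z))^{\pm 1}=O((1-x)^{-1/4})$, hence $E_1(z)^{\pm 1}=O((1-x)^{-1/4})$. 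Over the required range $x\in(1-\varepsilon,1-N^{-2}\log\log N)$ this blows up, so the boundedness claim cannot stand. Your argument is nevertheless salvageable, and for the exact right reason: you derived the sharper bound $\Xi(\lambda_x(z))-I=O\bigl(N^{-1}(1-x)^{1/2}\bigr)$, and the extra $(1-x)^{1/2}$ precisely absorbs the $(1-x)^{-1/4}\cdot(1-x)^{-1/4}=(1-x)^{-1/2}$ growth contributed by conjugation with $E_1$, giving $E_1(\Xi-I)E_1^{-1}=O(N^{-1})$. This is exactly the accounting the paper carries out (it tracks $M(\lambda_x(z))^{\pm 1}=O((1-x)^{-1/4})$ explicitly); you need to state this growth instead of claiming boundedness, or the uniform-in-$x$ part of the matching condition \eqref{mathcing condition near 1} is not justified.

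A smaller point in the analyticity step: you say the prefactors combine to give a \emph{bounded} contribution at $z=1$, but in fact $E_1(z)=O((z-1)^{-1/4+\alpha/2})$ as $z\to1$, which is unbounded when $\alpha<1/2$. Removability still holds because $\alpha>-1$ makes this singularity weaker than a simple pole, but the correct justification is that the growth rate is less than order one, not boundedness.
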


\begin{proof}
We first show that $E_1(z)$ is analytic in $B(1, \delta)$. From its definition, we only need to verify the analytic property of $E_1(z)$ across $(-1, 1)\cap B(1, \delta)$. Noting that $P^\infty(z)$ and $M(\lambda_x(z))$ satisfy the same jumping conditions in $B(1, \delta)$, we find that $E_{1}(z)_{-}^{-1}E_1(z)_{+} = I$ for $z\in (1-\delta, 1+\delta)$. Furthermore, as $z\to 1$, $\lambda_x(z) \to 0$, which implies that $E_1(z)$ is of order $O((z-1)^{-1/4+\alpha /2})$; as $z\to x$, $E_1(z)$ is of order $O(1)$. As a result, both $x$ and $1$ are removable singularities. So we conclude that $E_{1}(z)$ is analytic in $B(1, \delta)$. 

Next, with the jumping conditions of $\Phi(\lambda, u)$ in \eqref{Model-jump-Phi}, it is straightforward to check that $P(z)$ satisfies the same jump conditions as $S(z)$ in $B(1, \delta)$.

So, it remains to show that $P_1(z)$ satisfies the matching condition \eqref{mathcing condition near 1}. From the definition of $\Xi(\lambda)$ in \eqref{Xi} and the asymptotics \eqref{Xiasymp1}, one has
\begin{equation}
\Phi(\lambda)M(\lambda)^{-1} = I+O(u^{-1/2}\lambda^{-1}), \qquad u \to + \infty
\end{equation}
for $\lambda$ sufficiently large. With the property of $u_{N, x}$ in \eqref{u}, we have
\begin{equation}
\Phi(\lambda_x(z), u_{N, x})M(\lambda_x(z))^{-1} = I+O(u_{N, x}^{-1/2}\lambda_x(z)^{-1}), \qquad N \to  \infty,
\end{equation}
uniformly for $z\in \partial B(1, \delta)$ and $x \in (1-\varepsilon, 1-N^{-2}\log\log N)$. Then, by the definition of $P_1(z)$ in \eqref{P}, we have
{\small
\begin{eqnarray*}
P_1(z)P^\infty(z)^{-1} = P^\infty(z)W(z)^{\sigma_3}M^{-1}(\lambda_x(z)) \Phi(\lambda_x(z), u_{N, x}) M(\lambda_x(z))^{-1} M(\lambda_x(z))W(z)^{-\sigma_3}P^\infty(z)^{-1} \nonumber \\
= P^\infty(z)W(z)^{\sigma_3}M^{-1}(\lambda_x(z))\left(I+O\left(u_{N, x}^{-1/2}\lambda_x(z)^{-1}\right)\right)M(\lambda_x(z)) W(z)^{-\sigma_3} P^\infty(z)^{-1}.
\end{eqnarray*}}
Using \eqref{u} and \eqref{lambda}, we get from the above formula, as $N\to\infty$,
\begin{equation*}
P_1(z)P^\infty(z)^{-1} = P^\infty(z)W(z)^{\sigma_3}M^{-1}(\lambda_x(z))\left(I+O\left(N^{-1} (1-x)^{1/2} \right)\right)M(\lambda_x(z)) W(z)^{-\sigma_3} P^\infty(z)^{-1}.
\end{equation*}
uniformly for $z\in \partial B(1, \delta)$ and $x \in (1-\varepsilon, 1-N^{-2}\log\log N)$. By \eqref{lambda} and the definition of $M$ in \eqref{M}, one can see
\begin{equation}
M(\lambda_x(z))^{\pm 1} = O(|1-x|^{-1/4}), \qquad \textrm{as } x \to 1,
\end{equation}
uniformly for $z\in\partial B(1, \delta)$. Combining the above two formulas, we have 
\begin{equation}\label{PPinfty-1}
P_1(z)P^\infty(z)^{-1} = I + O(N^{-1}), \qquad \text{ as } N\to \infty,
\end{equation}
uniformly for $x\in (1-\delta, 1-N^{-2}\log\log N)$ and $z\in \partial B(1, \delta)$. This concludes the proof of the lemma.
\end{proof}

\subsubsection{The Small-norm RH Problem}
With all the global and local parametrices constructed, we define the final transformation as follows:
\begin{eqnarray}\label{transform map: edge S to R}
R(z) = \left\{\begin{array}{ll} S(z)P^\infty(z)^{-1} & \text{ for } z\in \mathbb{C}\setminus (\Gamma_S\cup B(1, \delta)\cup B(-1, \delta)),\\
S(z)P_{1}(z)^{-1} & \text{ for } z\in B(1, \delta), \\
S(z)P_{-1}(z)^{-1} & \text{ for } z\in B(-1, \delta).
\end{array}\right.
\end{eqnarray}
Then, $R(z)$ satisfies the following RH problem.

\begin{rhp}
\hfill
\begin{itemize}
\item[(a)] $R: \mathbb{C}\setminus \Sigma_R \to \mathbb{C}$ is analytic, where
\begin{equation}
\Sigma_R = \Sigma_S \cup \partial B(\pm 1, \delta) \setminus \{[-1, 1] \cup B(\pm 1, \delta)\}
\end{equation}
and the orientations on $\partial B(\pm 1, \delta)$ are taken to be clockwise.
\item[(b)] On $\Sigma_R$, $R$ satisfies the following jump conditions
\begin{eqnarray}
R_+(z) = 
\left\{
\begin{array}{ll}
R_{-}(z) 
    \left(
      \begin{array}{cc}
        1 & 0 \\
        e^{-\sqrt{2}\pi\gamma} w(z)^{-1} \varphi(z)^{-2N} & 1 \\
      \end{array}
    \right), & z\in \Gamma_1 \cup \overline{\Gamma}_1, \\
R_{-}(z) P_{-1}(z)P^{\infty}(z)^{-1}, & z\in B(-1, \delta),\\
R_{-}(z) P_1(z)P^{\infty}(z)^{-1}, & z\in B(1, \delta).
\end{array}
\right.
\end{eqnarray}
\item[(c)] As $z\to\infty$,
\begin{equation}
R(z) = I+O(z^{-1}).
\end{equation}
\end{itemize}
\end{rhp}

Recalling the definition of $\varphi(z)$ in \eqref{varphi}, and the matching property in \eqref{matching condition: P-1Pinfty} and \eqref{PPinfty-1},  it is direct to see that the jumps on $\Gamma_1$ and $\overline{\Gamma}_1$ are $I + O(e^{-N})$ as $N\to\infty$; and the jumps on $\partial B(-1, \delta) \cup \partial B(1, \delta)$ are $I + O(1/N)$ uniformly for $1-\varepsilon < x < 1 - N^{-2}\log\log N$. By the standard result small-norm RH problems, it follows that
\begin{equation}\label{Rasymp}
R(z) = I+O(N^{-1}) \quad \textrm{and} \quad R'(z) = O(N^{-1}), \qquad \textrm{as } N\to\infty, 
\end{equation}
uniformly for $z\in \mathbb{C}\setminus \Gamma_R$ and $1-\varepsilon < x < 1-N^{-2}\log\log N$. Here $\varepsilon > 0$ is a constant that can be sufficiently small.

\section{Asymptotics of Hankel Determinants} \label{Sec:Asy-hankle}

With the steepest descent analysis conducted in the previous section, we are now ready to derive the asymptotics for the corresponding Hankel determinants using the differential identities provided in Proposition \ref{prop:diff-identity}. One may compare the asymptotic results in Propositions \ref{Asymptotics in the Separated Regime}, \ref{Asymptotics in the merging regime}, and \ref{Asymptotics in the edge} with Theorems 1.8, 1.9, and 1.10 in \cite{CFL2021}, respectively.

\subsection{Asymptotics of Hankel Determinants in the Separated Regime}

We begin with Case (I), in which the weight function has jump discontinuities at $x_1$  and $x_2$, neither close to each other nor to the endpoints $\pm 1$. The asymptotics of the corresponding Hankel determinants follow from \cite[Theorem 1.3]{CG2021}, which applies to a family of Jacobi-type weight functions with finitely many Fisher-Hartwig singularities in $(-1,1)$. The weight function in \cite[Theorem 1.3]{CG2021} is required to be analytic in a fixed neighborhood of $[-1, 1]$ except for these Fisher-Hartwig singularities. We need a slight generalization of their result to allow the weight to possess finitely many additional singularities in the complex plane that may be close to $[-1,1]$. More precisely, we consider weight functions that are analytic and uniformly bounded on the following domain:
\begin{eqnarray}
\mathcal{S}_N &=& \left\{ z\in \mathbb{C}: |\Re z| \leq 1-3\delta_N, |\Im z| < \varepsilon_N / 2\right\} \nonumber\\
& & \cup \left\{ z\in\mathbb{C}: 1-3\delta_N \leq |\Re z| \leq 1+3\delta_N, |\Im z| < 3\delta_N \right\}.
\end{eqnarray}
where $\varepsilon_N = N^{-1+\alpha}$ and $\delta_N = N^{-\alpha / 2}$ with $0<\alpha<2/3$.

\begin{proposition}\label{Asymptotics in the Separated Regime}
Let $\gamma_1, \gamma_2 \in \mathbb{R}$ and  $-1<x_1<x_2<1$. Assume that $w=w_N$ is a sequence of functions which are real analytic on $[-1, 1]$ and uniformly bounded on $\mathcal{S}_N$. Then, we have
\begin{eqnarray} \label{equation: Asymptotics in seperated regime}
& & \log\frac{D_N(x_1, x_2; \gamma_1, \gamma_2; t+w_N)}{D_N(x_1, x_2; \gamma_1, \gamma_2; t)} \nonumber\\
&=& N\int \frac{w_N(x)dx}{\pi \sqrt{1-x^2}} + \frac{\alpha+\beta}{2\pi} \int_{-1}^1 \frac{w_N(x)}{\sqrt{1-x^2}}dx - \frac{\beta}{2}w_N(-1) - \frac{\alpha}{2}w_N(1) \nonumber\\
& & + \frac{1}{2}\sigma(w_N)^2 +\sigma^2(w_N; t) + \sum_{j=1}^2 \frac{\gamma_j}{\sqrt{2}} \sqrt{1-x_j^2} \mathcal{U} w_N(x_j) + o(1)
\end{eqnarray}
as $N\to\infty$. Here $\mathcal{U}w$ is the finite Hilbert transform defined by
\begin{equation} \label{def-Hilbert-transform}
    \mathcal{U}w(x) = \frac{1}{\pi} P. V. \int_{-1}^1 \frac{w(t)}{x-t}\frac{dt}{\sqrt{1-t^2}},
\end{equation}
and
\begin{equation}
\sigma^2(w; t) = \frac{1}{2\pi^2} \int\int_{[-1,1]^2} w'(x)t'(y) \Sigma(x, y)dxdy = -\frac{1}{2\pi} \int_{-1}^1 w'(t) \mathcal{U} t(s) \sqrt{1-s^2}ds, \label{eq:sigma-def}
\end{equation}
with $\sigma^2(w) := \sigma^2(w; w)$ and $\Sigma(x, y)$ given in \eqref{correlation kernel of X}.
\end{proposition}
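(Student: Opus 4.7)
The plan is to reduce to the proof of \cite[Theorem 1.3]{CG2021} via a differential identity in an interpolation parameter, and then verify that every step of the Deift--Zhou analysis in that paper can be carried out uniformly in $N$ when the extra potential $w_N$ is analytic and uniformly bounded only on the shrinking domain $\mathcal{S}_N$ rather than on a fixed neighborhood of $[-1,1]$.

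\textbf{Step 1: differential identity in $s$.} Set $t_s(x) = t(x) + s w_N(x)$ for $s \in [0,1]$ and $D_N(s) = D_N(x_1, x_2; \gamma_1, \gamma_2; t_s)$. Exactly as in \cite[Sec.~3]{Charlier-IMRN2019}, differentiating $\log D_N(s)$ using the Heine identity \eqref{Heine Identity} yields
\begin{equation*}
\partial_s \log D_N(s) \;=\; \int_{-1}^1 w_N(x)\, K_N^{(s)}(x,x)\, dx,
\end{equation*}
where $K_N^{(s)}$ is the Christoffel--Darboux kernel of the orthonormal polynomials with respect to the weight $\mathcal{W}^{(s)}$ defined as in \eqref{RHP-Y-weight} with $t$ replaced by $t_s$, and is expressible in terms of the solution $Y^{(s)}$ to RH problem \ref{rhp for Y}. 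Integrating in $s$ from $0$ to $1$ and exchanging the order of integration reduces the target asymptotic \eqref{equation: Asymptotics in seperated regime} to an asymptotic expansion of $K_N^{(s)}(x,x)$ uniform in $s \in [0,1]$ and $x \in [-1,1]$.

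\textbf{Step 2: uniform steepest descent.} Apply the transformations $Y^{(s)} \mapsto T \mapsto S \mapsto R$ of Section \ref{Steepest Descent Analysis for the RH Problem} in Case (I), with the modified potential $t_s$ entering only through the Szeg\H{o} factor $D_{t_s}$ in \eqref{D}. The Szeg\H{o} factor is linear in $s$, so $D_{t_s} = D_t \cdot D_{w_N}^s$, and the global parametrix \eqref{Pinfty}, together with the local Bessel parametrices at $\pm 1$ and the local piecewise-constant parametrices at $x_1, x_2$ built in \cite[Sec.~5]{CG2021}, provide $R(z) = I + O(N^{-1})$ uniformly in $s$ provided the lens contours are chosen to lie inside $\mathcal{S}_N$. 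The bulk lens can be drawn at imaginary height $\varepsilon_N/2 = \tfrac{1}{2}N^{-1+\alpha}$, on which $e^{-2N\,\mathrm{Im}\,\log \varphi(z)} = O(e^{-c N^{\alpha}})$, and the disks around $\pm 1$ can be shrunk to radius $\delta_N = N^{-\alpha/2}$, giving a Bessel matching error of $O((N\delta_N)^{-1}) = O(N^{-1+\alpha/2}) = o(1)$. The uniform boundedness of $w_N$ on $\mathcal{S}_N$ guarantees that $D_{w_N}^s$ and its derivatives are uniformly controlled on each of these contours.

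\textbf{Step 3: identification of the terms.} Undoing the transformations and inserting the expansion into the Christoffel--Darboux formula, the kernel takes the form $K_N^{(s)}(x,x) = \frac{N}{\pi\sqrt{1-x^2}} + F_s(x) + o(1)$, where $F_s(x)$ is an explicit finite Hilbert transform expression built from $D_{t_s}$, $D_w$ and $D_\gamma$. Integrating $\int_0^1 ds \int_{-1}^1 w_N(x) F_s(x)\,dx$ along the lines of \cite[Sec.~7]{CG2021}:\ the $D_{t_s}$-contribution is linear in $s$ and integrates to $\sigma^2(w_N; t) + \tfrac{1}{2}\sigma^2(w_N)$ by the identity \eqref{eq:sigma-def}; the $D_w$-contribution produces the boundary terms $\tfrac{\alpha+\beta}{2\pi}\int_{-1}^1 \frac{w_N(x)}{\sqrt{1-x^2}}\,dx - \tfrac{\alpha}{2}w_N(1) - \tfrac{\beta}{2}w_N(-1)$; and the $D_\gamma$-contribution yields the finite-Hilbert-transform terms $\sum_{j=1}^2 \tfrac{\gamma_j}{\sqrt{2}}\sqrt{1-x_j^2}\,\mathcal{U}w_N(x_j)$ through the representation \eqref{Dgamma} evaluated on the real axis.

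\textbf{Main obstacle.} The genuinely new point compared to \cite{CG2021} is the uniformity of every estimate when the strip of analyticity of $w_N$ shrinks like $\varepsilon_N = N^{-1+\alpha}$ in the bulk and $3\delta_N = 3N^{-\alpha/2}$ at the endpoints. This forces one to choose the lens and the endpoint disks inside $\mathcal{S}_N$, which in turn degrades the exponential decay of the off-diagonal jumps from $O(e^{-cN})$ to $O(e^{-cN^\alpha})$ and the local parametrix matching from $O(N^{-1})$ to $O(N^{-1+\alpha/2})$. The choice $0 < \alpha < 2/3$ in the definition of $\mathcal{S}_N$ keeps both errors $o(1)$ and allows one to still absorb the derivatives that appear when plugging the expansion into $\partial_s \log D_N$. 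Verifying these bounds are uniform in $s$ and in the possible locations of the singularities of $w_N$ within $\mathcal{S}_N \setminus [-1,1]$ is the only substantive departure from the argument already carried out in \cite{CG2021}, and it is what the setup of $\mathcal{S}_N$ is engineered to permit.
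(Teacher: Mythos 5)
Your proposal is correct and rests on the same Riemann--Hilbert machinery, but it is organized differently from the paper's (very short) proof. The paper simply applies \cite[Theorem 1.3]{CG2021} twice --- once with weight $t+w_N$ and once with weight $t$, with the parameter dictionary $m=2$, $\beta_k=-i\gamma_k/\sqrt{2}$, etc. --- and subtracts the two expansions, adding only the remark that the error degrades from $O(\log N/N)$ to $o(1)$ because $w_N$ is analytic merely on the shrinking domain $\mathcal{S}_N$. You instead interpolate $t_s=t+sw_N$, differentiate $\log D_N(s)$ via the Christoffel--Darboux kernel, and integrate in $s$; this requires the kernel asymptotics uniformly along the whole deformation path rather than only at the two endpoints, but it buys you an explicit account of exactly where the shrinking analyticity domain enters (lens height $\varepsilon_N/2$, endpoint disk radius $\delta_N$, and the resulting $O(e^{-cN^\alpha})$ and $O(N^{-1+\alpha/2})$ error rates), which is the content the paper leaves implicit. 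In effect you have reconstructed the proof of the cited theorem together with the justification of the paper's ``deteriorates slightly'' claim, and your bookkeeping of the $s$-integration (e.g.\ $\int_0^1\sigma^2(w_N;t_s)\,ds=\sigma^2(w_N;t)+\tfrac12\sigma^2(w_N)$) matches the stated constants. Two minor corrections: the local parametrices at the separated jump points $x_1,x_2$ are confluent hypergeometric (as in Appendix B.2), not ``piecewise-constant''; and the differential identity should carry the weight $\mathcal{W}^{(s)}(x)$ in the integrand unless you take $K_N^{(s)}$ to be the weighted kernel.
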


\begin{proof}
The asymptotics is obtained by setting the parameters $m=2, \alpha_0 = \alpha, \alpha_3 = \beta, \alpha_1 = \alpha_2 = 0; V(x)=1; \psi(x) = 1/ \pi$ and $t_k = x_k, \beta_k = -\frac{i\sqrt{2}\gamma_k}{2}$ with $k=1, 2$ in \cite[Theorem 1.3]{CG2021}.  The result in \eqref{equation: Asymptotics in seperated regime} is derived by comparing the asymptotics for the two weight functions $W(x)=t(x) + w_N(x)$ and $W(x) = t(x)$, where $W(x)$ denotes the weight function appearing in \cite[Theorem 1.3]{CG2021}.

When $w_N$ is analytic in a fixed neighborhood of $[-1, 1]$, the error term in \eqref{equation: Asymptotics in seperated regime} is $O(\frac{\log N}{N})$, as established in \cite[Theorem 1.3]{CG2021}. In our setting, as we only assume that $w_N$ is analytic and uniformly bounded in a shrinking neighborhood $\mathcal{S}_N$ of $[-1, 1]$, the error term deteriorates slightly and becomes $o(1)$ as $N\to\infty$.
\end{proof}



\subsection{Asymptotics of the Hankel Determinants in the Merging Regime}

Next, we move to Case (II), in which the two jumps in the weight function are close to each other but not near the endpoints $\pm 1$. To derive the asymptotics of the corresponding Hankel determinants, we first establish the following approximation for $E_{N, y}(z)$.

\begin{lemma}
Let $E_{N, y}(z)$ be the analytic function in $B(x_1, \delta)$ defined in \eqref{Emerge} and $x_2 = x_1+y$. Then, there exists a constant $0<c< \delta$ such that
\begin{equation} \label{eq:sec4.3-E-approx}
    E_{N, y}^{-1}(x_2) E_{N, y}'(x_2) = O(1), \quad \textrm{as } N \to \infty,
\end{equation}
uniformly for $0<y\leq c$.
\end{lemma}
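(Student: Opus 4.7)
The plan is to use Cauchy's integral formula on a fixed circle around $x_2$, combined with the analyticity of $E_{N,y}$ in $B(x_1, \delta)$ established in the proof of the preceding lemma. Fix $r_0 \in (0, \delta/2)$ and set $c := r_0/2$, so that for $0 < y \leq c$ the closed disk $\overline{B}(x_2, r_0)$ lies in $B(x_1, \delta)$ and every point on the circle $|z - x_2| = r_0$ is at distance at least $r_0/2$ from both $x_1$ and $x_2$. Cauchy's formula then yields
\begin{equation*}
E_{N,y}(x_2) = \frac{1}{2\pi i} \oint_{|z - x_2| = r_0} \frac{E_{N,y}(z)}{z - x_2}\, dz, \qquad E_{N,y}'(x_2) = \frac{1}{2\pi i} \oint_{|z - x_2| = r_0} \frac{E_{N,y}(z)}{(z - x_2)^2}\, dz,
\end{equation*}
so it suffices to bound $E_{N,y}(z)$ uniformly in $N$ and $y$ along the circle and to show that $|\det E_{N,y}(x_2)|$ is bounded below.

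For the uniform bound, we inspect each factor in the formula \eqref{Emerge}. The crucial observation is that every potentially singular or $N$-dependent factor carries a purely imaginary exponent (since $\gamma_1, \gamma_2 \in \mathbb{R}$), so its modulus remains uniformly bounded. Specifically, $|\varphi_+(x_1)| = 1$ since $x_1 \in (-1, 1)$, so $\varphi_+(x_1)^{-N\sigma_3}$ has entries of modulus $1$; and the factor $|s_{N,y}|^{(\gamma_1+\gamma_2)/(\sqrt{2}i)\sigma_3}$ appearing inside $\widehat{\Phi}^\infty$ has modulus $1$ for the same reason. Using the expansion $\lambda_y'(x_2) = \sqrt{1-x_1^2}/(y\sqrt{1-x_2^2}) \cdot (1 + O(y))$ derived from \eqref{sNy}, one sees that $\lambda_y$ maps the circle $|z - x_2| = r_0$ onto a closed curve of size $O(r_0/y)$ around $\lambda = 1$; nonetheless the principal arguments $\arg \lambda_y(z)$ and $\arg(1 - \lambda_y(z))$ remain in $(-\pi, \pi]$, so the factors $\lambda^{\gamma_1/(\sqrt{2}i)\sigma_3}$ and $(1 - \lambda)^{\gamma_2/(\sqrt{2}i)\sigma_3}$ in $\widehat{\Phi}^\infty(\lambda_y(z))^{-1}$ are bounded by a constant depending only on $\gamma_1, \gamma_2$. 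The global parametrix $P^\infty(z)$ has a local expansion $(z - x_j)^{-i\gamma_j/\sqrt{2}\,\sigma_3}$ times an analytic factor near each $x_j$, which is uniformly bounded on the circle by the distance estimate. Finally, $w_J(z)^{\sigma_3/2}$ is analytic and non-vanishing on the disk.

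A direct computation yields $\det E_{N,y}(z) \equiv 1$, since $\det P^\infty = \det w_J^{\sigma_3/2} = \det \widehat{\Phi}^\infty = \det \varphi_+(x_1)^{-N\sigma_3} = 1$ (for $\widehat{\Phi}^\infty$, all diagonal factors and the matrix $\sigma_3 \sigma_1$ have unit determinant). Combining the unit determinant with the Cauchy bound on $E_{N,y}(x_2)$, the inverse satisfies $E_{N,y}^{-1}(x_2) = O(1)$, and together with the Cauchy bound $E_{N,y}'(x_2) = O(1)$ this yields \eqref{eq:sec4.3-E-approx}. The main technical point, which is mild, is handling the piecewise definition of $\widehat{\Phi}^\infty$ across the real axis in \eqref{hatpsiinfty}; since the two pieces differ by the bounded matrix factor $e^{\sqrt{2}\pi\gamma_2\sigma_3}\sigma_3\sigma_1$, the uniform bound obtained on the upper and lower semicircles separately extends to the whole circle.
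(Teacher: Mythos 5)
Your argument is correct, and it takes a genuinely different route from the paper's. The paper proves the lemma by applying the product rule to $E_{N,y}^{-1}(z)E_{N,y}'(z)$ directly from~\eqref{Emerge}, tracking the individual contributions of $\widehat{\Phi}^\infty$, $w_J^{\sigma_3/2}$, and $P^\infty$, and then showing that the singular $O(1/y)$ pieces — arising from the $\frac{\gamma_2}{\lambda_y(z)-1}\lambda_y'(z)$ term, from $(\log D_\gamma)'(z)$, and from $\frac{\gamma_1}{\lambda_y(z)}\lambda_y'(z)$ — cancel against one another, leaving an $O(1)$ remainder. You instead exploit the analyticity of $E_{N,y}$ (established in the preceding lemma) together with the observation that every potentially problematic factor in~\eqref{Emerge} has purely imaginary exponent, hence bounded modulus, and apply Cauchy's formula on a fixed circle, with $\det E_{N,y}\equiv 1$ giving control on the inverse. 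Your approach yields the stronger conclusion that $E_{N,y}^{\pm 1}(x_2)$ and $E_{N,y}'(x_2)$ are each individually $O(1)$ (which the paper also uses, but states separately in the proof of Proposition~\ref{Asymptotics in the merging regime}). The trade-off is that the paper's explicit computation isolates the precise mechanism of cancellation — useful for understanding how the Painlev\'{e}~V local parametrix interacts with the Szeg\H{o}-function singularity — whereas your soft argument sidesteps this and is more robust to changes in the exact form of the ingredients; it does, however, rely on the preceding lemma's claim that $x_1$ is a removable singularity of $E_{N,y}$, which is precisely where the same delicate matching of branch-cut discontinuities is hiding. A small point worth stating explicitly in your write-up: $\frac{2N}{s_{N,y}}$ is $N$-independent by~\eqref{sNy}, so $\lambda_y$ depends only on $x_1,x_2$; this makes the uniformity in $N$ of the bound on the circle immediate rather than requiring a separate check.
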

\begin{proof}
Recalling the definition of $E_{N, y}(z)$ in \eqref{Emerge}, we have 
\begin{eqnarray}
& & \hspace{-0.5cm} E_{N, y}^{-1}(z)E_{N, y}'(z) =  \varphi_{+}(x_1)^{N\sigma_3} \left(\widehat{\Phi}^\infty(\lambda_y(z)) \frac{d}{dz} \left(\widehat{\Phi}^\infty(\lambda_y(z))^{-1}\right) \right. \nonumber\\
& & \  + \widehat{\Phi}^{\infty}(\lambda_y(z)) w_J(z)^{-\sigma_3 / 2} \frac{d}{dz} (w_J(z)^{\sigma_3 / 2}) \widehat{\Phi}^{\infty}(\lambda_y(z))^{-1} \nonumber\\
& & \  + \left. \widehat{\Phi}^\infty(\lambda_y(z)) w_J(z)^{-\sigma_3 / 2} P^\infty(z)^{-1} \frac{d}{dz}P^\infty(z) w_J(z)^{\sigma_3 / 2} \widehat{\Phi}^\infty(\lambda_y(z))^{-1}\right) \varphi_{+}(x_1)^{-N\sigma_3}. \label{eq:lemma4.3-1}
\end{eqnarray}
Since $E_{N, y}(z)$ is analytic in $B(x_1, \delta)$, we will study the limit $z\to x_2$ of the above quantity with $\Im z < 0$. For the first term in the above formula, by \eqref{hatpsiinfty}, we have 
\begin{eqnarray} \label{merging: Hankel: fisrt term}
\widehat{\Phi}^\infty(\lambda_y(z)) \frac{d}{dz} \widehat{\Phi}^\infty(\lambda_y(z))^{-1} = -\frac{\gamma_1}{\sqrt{2}i\lambda_y(z)}\lambda'_y(z) \sigma_3 - \frac{\gamma_2}{\sqrt{2}i(\lambda_y(z) - 1)} \lambda'_y(z)\sigma_3, \quad \Im z < 0.
\end{eqnarray}
For the second term, as $\widehat{\Phi}^{\infty}(z)$ is a diagonal matrix for $\Im z < 0$, we get
\begin{eqnarray}
\widehat{\Phi}^{\infty}(\lambda_y(z)) w_J(z)^{-\sigma_3 / 2} \frac{d}{dz} (w_J(z)^{\sigma_3 / 2}) \widehat{\Phi}^{\infty}(\lambda_y(z))^{-1} 
= w_J(z)^{-\sigma_3 / 2} w_J'(z)^{\sigma_3 / 2}.
\end{eqnarray}
For the third term, from the definition of $P^\infty$ in \eqref{Pinfty}, we have
\begin{multline}
P^\infty(z)^{-1}\frac{d}{dz}P^\infty(z) = \\ -\left[(\log D)'_\gamma(z)) +(\log D_{w})'(z) + (\log D_{t})'(z)\right]\sigma_3 + D(z)^{\sigma_3}Q^{-1}(z)Q'(z)D(z)^{-\sigma_3}.
\end{multline}
It is direct to see that $D(z)^{\sigma_3}Q^{-1}(z)Q'(z)D(z)^{-\sigma_3}$ is bounded near $z = x_2$,  and  $\varphi_+(x_1)^N = O(1)$ as $N\to\infty$. We also note that all other terms in the preceding three formulas are diagonal matrices, which commute with $\varphi_{+}(x_1)^{\pm N \sigma_3}$ in \eqref{eq:lemma4.3-1}. Recalling the definition of $\lambda_y(z)$ in \eqref{lambdayz}, we know $\lambda_y(x_2) = 1$. Therefore, we have
\begin{equation} \label{Eestimate}
E_{N, y}^{-1}(x_2) E'_{N, y}(x_2) = \left[-\frac{\gamma_1}{\sqrt{2}i}\lambda'_y(x_2) + \lim_{z\to x_2}\left( -\frac{\gamma_2}{\sqrt{2}i(\lambda_y(z) - 1)} \lambda'_y(z) - (\log D_\gamma)'(z)\right)\right]\sigma_3 + O(1)
\end{equation}
as $N\to\infty$, uniformly for $0< y \leq c$.

To study $(\log D_\gamma)'(z)$, from its definition in \eqref{Dgamma}, one has
\begin{eqnarray}
D_{\gamma}(z)
&=& \prod_{j=1}^2 \left(\frac{-zx_j+1 + e^{\frac{\pi i}{2}} \sqrt{(z^2-1)(1-x_j^2)}}{z-x_j}\right)^{\frac{\gamma_j}{\sqrt{2}i}}.
\end{eqnarray}
This gives us
\begin{eqnarray}
(\log D_\gamma)'(z) &=& \sum_{j=1}^2 \frac{\gamma_j}{\sqrt{2}i} \frac{d}{dz}\log \left(\frac{-zx_j+1 + e^{\frac{\pi i}{2}} \sqrt{(z^2-1)(1-x_j^2)}}{z-x_j}\right) \nonumber\\
&=& -\frac{1}{\sqrt{2}i} \left(  \frac{\gamma_2}{z-x_2} + \frac{\gamma_1}{x_2-x_1}  \right)+ O(1), \qquad z \to x_2, 
\end{eqnarray}
uniformly for $0< y \leq  c$. Using the definition of $\lambda_y(z)$ in \eqref{lambdayz}, one can see that for $\Im z<0$,
\begin{eqnarray}
\frac{\lambda'_y(z)}{\lambda_y(z)-1} = \frac{(\log\varphi)'(z)}{\log\varphi(z) + \log\varphi_{+}(x_2)}. 
\end{eqnarray}
Hence, we have
\begin{eqnarray}
& & \lim_{z\to x_2, \, \Im z <0}\left( -\frac{\gamma_2}{\sqrt{2}i(\lambda_y(z) - 1)} \lambda'_y(z) - (\log D_\gamma)'(z)\right) \nonumber\\
&=& \lim_{z\to x_2, \, \Im z <0}\left( -\frac{\gamma_2}{\sqrt{2}i} \frac{(\log\varphi)'(z)}{\log\varphi(z) + \log\varphi_{+}(x_2)} -\frac{1}{\sqrt{2}i} \left(  \frac{\gamma_2}{z-x_2} + \frac{\gamma_1}{x_2-x_1}  \right) \right) + O(1) \nonumber\\
&=& \frac{\gamma_1}{\sqrt{2}i}\frac{1}{x_2-x_1} + O(1),
\end{eqnarray}
uniformly for $0<y\leq c$. Using \eqref{Eestimate}, we obtain the desired approximation in \eqref{eq:sec4.3-E-approx} from the formula above.
\end{proof}

Now we have the following asymptotics for the Hankel determinants in the merging regime.
\begin{proposition}\label{Asymptotics in the merging regime}
Let $\gamma_1, \gamma_2\in\mathbb{R}$, $-1<x_1<x_2<1$, $t(x)$ be real analytic on $[-1,1]$, and $D_N(x_1, x_2; \gamma_1, \gamma_2; t)$ be defined in \eqref{eq1}. Then, as $N\to\infty$, we have
\begin{eqnarray}
 \log \frac{D_N(x_1, x_2; \gamma_1, \gamma_2; t)}{D_N(x_1; \gamma_1+\gamma_2; t)} = 
\sqrt{2}N\gamma_2 \int_{x_1}^{x_2} \frac{du}{\sqrt{1-u^2}} - \gamma_1\gamma_2\max\{0, \log |x_1-x_2|N\} + O(1),
\end{eqnarray}
where the error term is uniform for $-1+\delta<x_1<x_2<1-\delta, 0<x_2-x_1<\delta$ for $\delta$ sufficiently small.
\end{proposition}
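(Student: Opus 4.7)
The plan is to integrate the differential identity \eqref{di1} with respect to $x_2$ over $[x_1, x_2]$, using the coalescence
$$D_N(x_1, x_1; \gamma_1, \gamma_2; t) = D_N(x_1; \gamma_1+\gamma_2; t)$$
as the initial condition, since the two-jump weight at a coincident point agrees with the one-jump weight of combined strength $\gamma_1+\gamma_2$. It therefore suffices to show that
$$\int_{x_1}^{x_2}\frac{d}{du}\log D_N(x_1, u; \gamma_1, \gamma_2; t)\,du = \sqrt{2}\, N \gamma_2 \int_{x_1}^{x_2}\frac{du}{\sqrt{1-u^2}} - \gamma_1 \gamma_2 \max\{0, \log |x_1 - x_2|N\} + O(1),$$
uniformly for $0 < x_2 - x_1 < \delta$ with $\delta$ small and $x_1$ in a fixed compact subset of $(-1,1)$.

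First, I would unwind the transformations $Y \mapsto T \mapsto S \mapsto R$ inside $B(x_1, \delta)$. Combining \eqref{transform: Y-to-T}, \eqref{eq:T-S-map}, \eqref{Pmerge} and \eqref{merging-R-def} yields
$$Y(z) = 2^{-N\sigma_3} R(z)\, E_{N,y}(z)\, \widehat{\Phi}_{PV}(\lambda_y(z); s_{N,y})\, w_J(z)^{-\sigma_3/2}\, J(z)\, \varphi(z)^{N\sigma_3},$$
where $J(z)$ is the lens factor from \eqref{eq:T-S-map}. Taking the logarithmic derivative and letting $z\to x_2$, the entry $(Y^{-1}Y')_{2,1}$ splits into four contributions: the $R^{-1}R'$ term is $O(1/N)$ by \eqref{Asym for R in merging case}; the $E_{N,y}^{-1}E_{N,y}'$ term is $O(1)$ by \eqref{eq:sec4.3-E-approx}; the $\varphi^{N\sigma_3}$-conjugation contributes the factor $\varphi_+(x_2)^{-2N}$, which after multiplication by the prefactor $-(1-x_2)^\alpha(1+x_2)^\beta e^{t(x_2)}(1-e^{\sqrt{2}\pi\gamma_2})/(2\pi i)$ and integration in $x_2$ yields exactly the oscillatory contribution $\sqrt{2}\, N \gamma_2 \int_{x_1}^{x_2}(1-u^2)^{-1/2}\,du$; the remaining nontrivial contribution comes from $\widehat{\Phi}_{PV}^{-1}\widehat{\Phi}_{PV}'$ evaluated at $\lambda_y(x_2)=1$ with parameter $s_{N,y}$.

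To analyze the Painlev\'e V piece, I would change variables from $x_2$ to $s_{N,y}$ using \eqref{sNy}, which gives $s_{N,y} = -2iN(x_2-x_1)/\sqrt{1-x_1^2} + O(Ny^2)$ and hence $|s_{N,y}| \asymp N|x_1-x_2|$. This recasts the Painlev\'e V contribution to $\log D_N(x_1,x_2;\gamma_1,\gamma_2;t)$ as an integral of the associated Jimbo--Miwa--Okamoto $\sigma$-form Hamiltonian against $ds/s$. Using the small-$s$ expansion $\sigma(s)=O(s)$ and the large-$s$ asymptotic $\sigma(s) \to -\gamma_1\gamma_2 + o(1)$, this integral evaluates to $-\gamma_1\gamma_2 \log|s_{N,y}| + O(1)$ when $|s_{N,y}|$ is large and to $O(1)$ when $|s_{N,y}|$ is bounded. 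Combined with $|s_{N,y}| \asymp N|x_1-x_2|$, this is precisely $-\gamma_1\gamma_2 \max\{0, \log|x_1-x_2|N\} + O(1)$, matching the claim.

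The main obstacle is the uniform control across the crossover $|x_1 - x_2|N \asymp 1$: for $|s_{N,y}|$ large the Painlev\'e V parametrix degenerates into a product of two decoupled confluent hypergeometric parametrices, while for $|s_{N,y}|$ bounded it remains a genuine transcendent, and the matching error in \eqref{eq: p-p-infty-math} must be propagated uniformly across this regime. This is handled exactly as in \cite[Sec.~7]{CFL2021}, transposed from the Gaussian weight to the Jacobi-type background; the Bessel local parametrices at $\pm 1$ together with \eqref{Asym for R in merging case} keep the contributions from the endpoints uniformly subdominant, so no new singular terms appear beyond the $\log N$ produced by the merging pair.
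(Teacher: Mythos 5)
Your overall plan (integrate \eqref{di1} from the coalescence point $x_1$ and reduce to a Painlev\'e V Hamiltonian) follows the paper's route, but the decomposition of $(Y^{-1}Y')_{2,1}$ contains a real error that would make the computation fail.

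First, your formula for $Y$ inside $B(x_1,\delta)$ is incorrect. Substituting \eqref{Pmerge} into $S = R\,P_{x_1}$ and then into $T$ and $Y$, the factor $\varphi(z)^{-N\sigma_3}$ carried by $P_{x_1}$ cancels against the $\varphi(z)^{N\sigma_3}$ from \eqref{transform: Y-to-T} (outside the lens, where the limit $z\to x_2$ is taken), leaving
\begin{equation*}
Y(z) = 2^{-N\sigma_3} R(z)\, E_{N,y}(z)\, \widehat{\Phi}_{PV}(\lambda_y(z); s_{N,y})\, w_J(z)^{-\sigma_3/2},
\end{equation*}
with no residual $\varphi^{N\sigma_3}$. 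So there is no ``$\varphi^{N\sigma_3}$-conjugation'' contribution at all, and the $\varphi_+(x_2)^{-2N}$ factor you invoke does not appear. Even setting this aside, the mechanism you propose cannot work: a unimodular oscillatory factor $\varphi_+(x_2)^{-2N}$ multiplied by a fixed smooth prefactor and integrated over $x_2$ gives a contribution of size $O(1/N)$ by Riemann--Lebesgue/stationary phase, not a term growing like $N$. It could never produce the leading $\sqrt{2}\,N\gamma_2\int_{x_1}^{x_2}(1-u^2)^{-1/2}\,du$.

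The second problem is downstream of the first: because you have shipped the $O(N)$ term off to the (nonexistent) $\varphi$-conjugation, your treatment of the Painlev\'e V piece is missing exactly the term that is supposed to generate it. You write that $\sigma(s)\to -\gamma_1\gamma_2 + o(1)$ as $|s|\to\infty$, but that is not the correct large-$|s|$ asymptotic of the relevant Hamiltonian: after \cite[Lemma~6.2, Prop.~6.3]{CFL2021}, the differential identity reduces to $\lambda'_y(x_2)\bigl(\sigma(s_{N,y}) - \tfrac{\gamma_1+\gamma_2}{2\sqrt{2}i}s_{N,y} - \tfrac{(\gamma_1+\gamma_2)^2}{4}\bigr) + O(1)$, and the bracketed quantity behaves like $\tfrac{\gamma_2}{\sqrt{2}}|s_{N,y}| - \gamma_1\gamma_2 + O(|s_{N,y}|^{-1})$ for large $|s_{N,y}|$. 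Since $\lambda'_y \asymp 2N|(\log\varphi_+)'(x_1+y)|/|s_{N,y}|$, the linear-in-$|s|$ piece is precisely what produces $\sqrt{2}N\gamma_2 (1-(x_1+y)^2)^{-1/2}$ after multiplication, and hence the $O(N)$ term after integration; the constant $-\gamma_1\gamma_2$ times $\lambda'_y \asymp y^{-1}$ produces the logarithm. In other words, both the $O(N)$ term and the $\log(N|x_1-x_2|)$ term come from the Painlev\'e~V Hamiltonian, not just the latter. As written, your argument would yield only the logarithmic term and would be off by the dominant $O(N)$ contribution.

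The rest of your proposal (initial condition at coalescence, using \eqref{Asym for R in merging case} and \eqref{eq:sec4.3-E-approx} to control the $R$ and $E_{N,y}$ pieces, the crossover analysis across $|s_{N,y}|\asymp 1$, and the Bessel parametrices keeping the endpoints subdominant) is consistent with the paper. But the attribution of the leading term must be corrected before the proof can go through.
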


\begin{proof}
By tracing back the transformations $Y \mapsto T \mapsto S \mapsto R$ in \eqref{transform: Y-to-T}, \eqref{eq:T-S-map} and \eqref{merging-R-def}, we have $Y(z) = 2^{-N\sigma_3}R(z) P_{x_1}(z)\varphi(z)^{N\sigma_3}$  for $z\in B(x_1, \delta)$. Recalling the definition of the local parametrix $P_{x_1}$ in \eqref{Pmerge}, we obtain
\begin{equation}
Y(z) = 2^{-N\sigma_3}R(z)E_{N, y}(z)\widehat{\Phi}_{\text{PV}}(\lambda_y(z); s_{N, y}) w_J(z)^{-\sigma_3 / 2}, \qquad z\in B(x_1, \delta).
\end{equation}
As $x_2 \in B(x_1, \delta) $ and $\lambda_y(x_2) = 1$, we apply the differential identity \eqref{di1} to get
\begin{eqnarray}\label{Asymptotics of DN in the bulk}
& & \frac{d}{dy} \log D_N(x_1, x_2 = x_1+y; \gamma_1, \gamma_2; t) \nonumber\\
&=& -\frac{1-e^{\sqrt{2}\pi\gamma_2}}{2\pi i}
\left[ \lim_{z\to x_2} \left( (\widehat{\Phi}_{\text{PV}}(\lambda_y(z); s_{N, y}))^{-1} E_{N, y}(z)^{-1} R(z)^{-1} R'(z) E_{N, y}(z) \widehat{\Phi}_{\text{PV}}(\lambda_y(z); s_{N, y})\right)_{2, 1} \right. \nonumber\\
& & + \lim_{z\to x_2}\left( (\widehat{\Phi}_{\text{PV}}(\lambda_y(z); s_{N, y}))^{-1} E_{N, y}(z)^{-1} E'_{N, y}(z) \widehat{\Phi}_{\text{PV}}(\lambda_y(z); s_{N, y})\right)_{2, 1} \nonumber\\
& & \left.+ \lambda'(x_2)\lim_{\lambda\to 1}\left( (\widehat{\Phi}_{\text{PV}}(\lambda_y(z); s_{N, y}))^{-1} \widehat{\Phi}_{\text{PV}}'(\lambda_y(z); s_{N, y})\right)_{2, 1}\right].
\end{eqnarray}
For the first term in the bracket, recalling the asymptotics of $R(z)$ in \eqref{Asym for R in merging case}, we have
\begin{equation*}
R(x_2)^{-1}R'(x_2) = O(1), \qquad \text{ as } N\to\infty.
\end{equation*}
Obviously, $E_{N, y}(x_2)^{\pm 1}$ is of order $O(1)$ as $N\to\infty$. Moreover, with the approximation in \eqref{eq:sec4.3-E-approx}, we can rewrite \eqref{Asymptotics of DN in the bulk} as
\begin{eqnarray}
& & \frac{d}{dy} \log D_N(x_1, x_2 = x_1+y; \gamma_1, \gamma_2; t) \nonumber\\
&=& -\frac{1-e^{\sqrt{2}\pi\gamma_2}}{2\pi i}  \left[ \lim_{\lambda\to 1}\ \left( (\widehat{\Phi}_{\text{PV}}(\lambda; s_{N, y}))^{-1} O(1) \widehat{\Phi}_{\text{PV}}(\lambda; s_{N, y})\right)_{2, 1} \right. \nonumber\\
& & \left.+ \lambda'(x_2) \lim_{\lambda\to 1}\left( (\widehat{\Phi}_{\text{PV}}(\lambda; s_{N, y}))^{-1} \widehat{\Phi}_{\text{PV}}'(\lambda; s_{N, y})\right)_{2, 1} \right], \qquad \text{ as } N\to\infty.
\end{eqnarray}
Using the properties of $\widehat{\Phi}_{\text{PV}}$ listed in Lemma 6.2 and Proposition 6.3 in \cite{CFL2021}, the above formula yields
\begin{multline}
 \frac{d}{dy} \log D_N(x_1, x_2 = x_1+y; \gamma_1, \gamma_2; t) \\
= \lambda'_y(x_2)\left(\sigma(s_{N, y}) - \frac{\gamma_1+\gamma_2}{2\sqrt{2}i}s_{N, y} - \frac{(\gamma_1+\gamma_2)^2}{4}\right) + O(1),  \qquad \text{ as } N\to\infty,
\end{multline}
where $\sigma(s)$ is a smooth solution to the $\sigma$-form of Painlev\'e V equation.

The remaining  derivation is very similar to that in \cite[Sec. 7.5]{CFL2021} and we omit the details here. Hence, it concludes the proof of the proposition.
\end{proof}

\subsection{Asymptotics of the Hankel Determinants in the Edge Regime}

Finally, we consider Case (III), where the weight function has a single jump point located near one of the endpoints $\pm 1$. Following the steepest analysis in Sec. \ref{sec:edge-analysis}, 
we have from \eqref{P-1}, \eqref{E-1}, \eqref{Xi}, \eqref{P} and \eqref{transform map: edge S to R} that
\begin{equation} \label{eq:Sec4.3-S-formula}
S(z) = R(z)H(z)\Phi_{\text{HG}}(h_u(\lambda_x(z))) e^{-2(\lambda_x(z) u_{N, x})^{1/2} \sigma_3} e^{-\sqrt{2}\pi\frac{\gamma}{4}\sigma_3} e^{\frac{\alpha\pi i}{2}\sigma_3} W(z)^{-\sigma_3}
\end{equation}
for $z$ outside the lens with $\Im z>0$, where  $H$ is given as
\begin{equation}\label{H}
H(z) = E_1(z)\Xi(\lambda_x(z))M(\lambda_x(z))e^{-\frac{\pi \alpha i}{2}\sigma_3} h_u(\lambda_x(z))^{\frac{\gamma}{\sqrt{2}i}\sigma_3} e^{2iu^{1/2}\sigma_3}
\end{equation}
To derive the asymptotics of the corresponding Hankel determinants, we first establish the following two approximations for  $H^{\pm 1}(x)$ and $H^{-1}(x) H'(x)$. 

\begin{lemma}\label{lemma: estimation of H}
Let $H(z)$ be defined in \eqref{H}. For any $\varepsilon > 0$, we have
\begin{equation}
    H(x)^{\pm 1} = O((1-x)^{-1/4}), \qquad \textrm{as } N\to\infty,
\end{equation}
 uniformly for $1- \varepsilon < x < 1- N^{-2}\log\log N$.
\end{lemma}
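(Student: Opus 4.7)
The plan is to first unpack $H(z)$ using the definition \eqref{E} of $E_1$, which yields the convenient form
\[
H(z) = P^\infty(z)\, W(z)^{\sigma_3}\, M(\lambda_x(z))^{-1}\, \Xi(\lambda_x(z))\, M(\lambda_x(z))\, e^{-\frac{\pi \alpha i}{2}\sigma_3}\, h_u(\lambda_x(z))^{\frac{\gamma}{\sqrt{2}i}\sigma_3}\, e^{2iu^{1/2}\sigma_3}.
\]
Since $\lambda_x(x) = -1$, the task reduces to estimating each factor at $z = x$ (taking boundary values from $\Im z > 0$). I would separate the analysis into three pieces: (i) the conjugation $M(-1)^{-1}\Xi(-1)M(-1)$, (ii) the trailing diagonal factors, and (iii) the leading product $P^\infty(x)W(x)^{\sigma_3}$, which is the only source of singular behavior.

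For (i), I invoke the small-norm estimate \eqref{Xiasymp1} together with the lower bound $u_{N,x}\geq c\log\log N$ coming from \eqref{u} in the range $1-x\geq N^{-2}\log\log N$, yielding $\Xi(-1)=I+o(1)$. A direct inspection of the formula \eqref{M} shows that $M(-1)^{\pm 1}$ has entries of bounded modulus: $\lambda^{-\sigma_3/4}$ has modulus $1$ at $\lambda=-1$, and although the inner ratio $(1+e^{-\pi i/2}\sqrt{\lambda})/\sqrt{\lambda+1}$ can vanish or blow up as $\lambda\to -1$, it is raised to the purely imaginary exponent $-i\sqrt{2}\gamma\sigma_3$, so its modulus only depends on the bounded argument of the ratio. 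The same reasoning handles the factor $h_u(-1)^{\gamma/(\sqrt{2}i)\sigma_3}$ in (ii) (the zero of $h_u$ at $-1$ is harmless under an imaginary power), while $e^{-\pi\alpha i/2\sigma_3}$ and $e^{2iu^{1/2}\sigma_3}$ are unimodular because $\alpha$ and $u^{1/2}$ are real. Consequently the entire middle block $M(-1)^{-1}\Xi(-1)M(-1)$ is $I+o(1)$ and the trailing factors are $O(1)$.

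The main computation is the estimate of $P^\infty(x)W(x)^{\sigma_3}$. From the local behavior in RH problem \ref{rhp for Pinfty}(e), the entries of $P^\infty(z)$ are of size $|z-1|^{-1/4}|z-1|^{\mp\alpha/2}$, while from \eqref{W(z)} the diagonal matrix $W(x)^{\sigma_3}$ has entries of modulus $(1-x)^{\pm\alpha/2}$ up to a bounded $(1+x)^{\beta/2}e^{t(x)/2}$ factor. Entry-wise multiplication shows that the $\alpha$-powers cancel and every entry of $P^\infty(x)W(x)^{\sigma_3}$ is $O((1-x)^{-1/4})$. Combined with (i) and (ii) this gives $H(x) = O((1-x)^{-1/4})$. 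For $H(x)^{-1}$, the analogous product $W(x)^{-\sigma_3}P^\infty(x)^{-1}$ obeys the same entry-wise bound (since $\det P^\infty\equiv 1$, the inverse has the same size structure as $P^\infty$), and the remaining factors on the left of $H(x)^{-1}$ are controlled by the arguments in (i) and (ii).

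The main obstacle will be the careful branch tracking needed to verify that the $(z-1)^{\pm\alpha/2}$ factors arising in $P^\infty(x)$ and $W(x)^{\sigma_3}$ combine only through their moduli (they differ by unimodular phases depending on which side of the cut at $z=1$ one takes boundary values from), and to ensure that the bound $\Xi(-1)=I+o(1)$, together with the boundedness of $M(-1)^{\pm 1}$ and $h_u(-1)^{\gamma/(\sqrt 2 i)\sigma_3}$, is uniform over the whole range $1-\varepsilon < x < 1-N^{-2}\log\log N$; this uniformity follows because $u_{N,x}^{-1/2}\leq (\log\log N)^{-1/2}\to 0$ uniformly in this range.
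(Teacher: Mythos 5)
Your proposal is correct, and it takes a genuinely different route from the paper's proof. The paper keeps the analytic factor $E_1(z) = P^\infty(z)W(z)^{\sigma_3}M(\lambda_x(z))^{-1}$ intact and estimates it via the factorization $E_1 = D_\infty^{\sigma_3}Q(z)K(z)B^{-1}\lambda_x(z)^{\sigma_3/4}$, where $K(z)$ absorbs the singular factors $D_\gamma^{-\sigma_3}$ (from $P^\infty$) and the $M^{-1}$-related imaginary-power factor; showing $K_+(x)=O(1)$ then requires the Taylor expansion \eqref{asymptotics that help us to estimate F}, which verifies an exact cancellation between these two oscillating pieces. Your decomposition $H = [P^\infty W^{\sigma_3}]\cdot[M^{-1}\Xi M]\cdot[\text{trailing}]$ sidesteps that cancellation entirely by bounding $M(\lambda)^{\pm 1}$ near $\lambda=-1$ and $D_\gamma(z)^{\mp\sigma_3}$ near $z=x$ separately, each via the elementary observation that a quantity with bounded argument raised to a purely imaginary power has bounded modulus. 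This is more elementary and shorter for the purposes of this lemma; the paper's finer expansion is however what makes $K_+'(x)=O(1)$ available, which is needed in the subsequent Lemma 4.5 on $H^{-1}H'$, so the paper's choice is not redundant in context. One small point worth tightening: when you quote the local behaviour of $P^\infty(z)$ as $z\to 1$ from RH problem \ref{rhp for Pinfty}(e), the implicit constant there could a priori depend on $x$ since $P^\infty$ contains the $x$-dependent factor $D_\gamma$; to get the uniformity you claim, it is cleaner to pass directly through the explicit formula $P^\infty W^{\sigma_3} = D_\infty^{\sigma_3}Q(z)\big[D(z)^{-1}W(z)\big]^{\sigma_3}$, observe that $D(z)^{-1}W(z)=e^{t(z)/2}\varphi(z)^{(\alpha+\beta)/2}/(D_t(z)D_\gamma(z))$ so the $\alpha$-powers cancel algebraically, and note that $D_\gamma$, $D_t$, $D_\infty$ all have modulus bounded uniformly in $z$ and $x$, leaving $Q(z)=O((1-z)^{-1/4})$ as the only growing factor.
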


\begin{proof}

We first estimate $E_{1}^{\pm 1}(x)$. From its definition in \eqref{E}, we have 
\begin{eqnarray} \label{eq: E1(z)-Lemma4.4}
E_1(z) =  D_{\infty}^{\sigma_3}Q(z)D(z)^{-\sigma_3} W(z)^{\sigma_3} \left(\frac{1+e^{-\pi i/2}\sqrt{\lambda_x(z)}}{\sqrt{\lambda_x(z)+1}}\right)^{i\sqrt{2}\gamma \sigma_3} B^{-1} (\lambda_x(z))^{\frac{\sigma_3}{4}}.
\end{eqnarray}
For simplicity, denote $K(z)$ as
\begin{equation} \label{F}
K(z) = D(z)^{-\sigma_3} W(z)^{\sigma_3} \left(\frac{1+e^{-\pi i/2}\sqrt{\lambda_x(z)}}{\sqrt{\lambda_x(z)+1}}\right)^{i\sqrt{2}\gamma \sigma_3}.
\end{equation}
Hence, $E_1(z)$ can be written as $
E_1(z) = D_\infty^{\sigma_3} Q(z)K(z)B^{-1}(\lambda_x(z))^{\frac{\sigma_3}{4}}.$ Since there is only one jump located at $z=x$, it then follows from \eqref{Dgamma} that
\begin{equation}\label{523}
D_{\gamma}(z) = e^{\frac{\gamma\pi}{\sqrt{2}}} \left( \frac{zx-1 + e^{-\frac{\pi i}{2}} \sqrt{(z^2-1)(1-x^2)}}{z-x}\right)^{\frac{\gamma}{\sqrt{2}i}},
\end{equation}
where the principal branch is take in the above formula. Using \eqref{D} and \eqref{W(z)}, we have
\begin{multline}
K(z) = \varphi(z)^{\frac{\alpha+\beta}{2}\sigma_3} e^{\frac{t(z)}{2}\sigma_3} \exp\left(-\frac{(z^2-1)^{1/2}}{2\pi} \int_{-1}^1 \frac{t(x)}{\sqrt{1-x^2}} \frac{dx}{z-x} \sigma_3\right) \\
  \left(\frac{-zx+1+e^{\frac{\pi i}{2}} \sqrt{(z^2-1)(1-x^2)}}{z-x}\right)^{-\frac{\gamma}{\sqrt{2}i} \sigma_3} \left(\frac{1+e^{-\pi i/2}\sqrt{\lambda_x(z)}}{\sqrt{\lambda_x(z)+1}}\right)^{i\sqrt{2}\gamma \sigma_3}.
\end{multline}
Let us take $z\to x$ in the upper half-plane and outside the lens. Note that the first three terms in the above formula is bounded as $z \to x$. For the remaining two terms, we recall the definition of $\lambda_{x}(z) $ in \eqref{lambdadef} and obtain
\begin{equation}\label{lambdaasymp}
\lambda_x(z) = -1+c_1(z-x)+c_2(z-x)^2+O((z-x)^3), \quad \text{ as }z \to x
\end{equation}
with 
\begin{eqnarray}
c_1 = \lambda'_x(x) = \frac{1}{1-x}\left(1 + O(1-x)\right), \qquad 
c_2= O\left(\frac{1}{1-x}\right).
\end{eqnarray}
Then, we have
\begin{eqnarray} \label{asymptotics that help us to estimate F}
& &\left(\frac{-zx+1+e^{\frac{\pi i}{2}} \sqrt{(z^2-1)(1-x^2)}}{z-x}\right)\left(\frac{1+e^{-\pi i/2}\sqrt{\lambda}}{\sqrt{\lambda+1}}\right)^2 \nonumber \\
&=&\frac{1}{c_1(1-x^2)} \left(2+(z-x)\left[\frac{2x}{1-x^2} -c_1 - 2\frac{c_2}{c_1}\right]\right) + O((z-x)^2),
\end{eqnarray}
which implies $K_+(x)$ is of order $O(1)$ as $x\to 1$. Similarly, a straightforward computation gives us $K_+'(x) = O(1)$ as $x\to 1$. Using the definition of $Q$ in \eqref{Q}, we know $Q_+^{\pm 1} (x)= O((1-x)^{-1/4})$ as $x\to 1$. Therefore, we have from \eqref{eq: E1(z)-Lemma4.4} that
\begin{equation}\label{estimation of E1}
E_1^{\pm 1}(x) = O((1-x)^{-1/4}), \qquad \textrm{as } x\to 1.
\end{equation}

We now claim that
\begin{equation}\label{Mehu}
\lim_{z\to x} M(\lambda_x(z)) e^{-\frac{\pi\alpha i}{2} \sigma_3} h_u(\lambda_x(z))^{\frac{\gamma}{\sqrt{2}i}\sigma_3} = O(1), \qquad \textrm{as } x\to 1,
\end{equation}
where the limit $z\to x$ is taken in the upper half-plane and outside the lens. Using the definition of $\lambda_x(z)$ in \eqref{lambdadef}, one can see  $\lambda_x(z)\to -1$ as $z\to x$. In addition, as $\lambda \to -1$, we have 
\begin{equation}
\left(\frac{1+e^{-\pi i/2}\sqrt{\lambda}}{\sqrt{\lambda+1}}\right)^{-i\sqrt{2}\gamma} = 2^{-i\sqrt{2}\gamma}(\lambda+1)^{\frac{i\sqrt{2}\gamma}{2}} \left(1 + \frac{i\sqrt{2}\gamma}{4}(\lambda+1)+O((\lambda+1)^2)\right).
\end{equation}
Then, from the definition of $M$ in \eqref{M} and the asymptotics of $h_u$ in \eqref{hasymp}, we obtain
\begin{eqnarray}
&  M(\lambda) e^{-\frac{\pi\alpha i}{2} \sigma_3} h_u(\lambda)^{\frac{\gamma}{\sqrt{2}i}\sigma_3} = \lambda^{-\frac{1}{4}\sigma_3}B 2^{-i\sqrt{2}\gamma \sigma_3}(\lambda+1)^{\frac{i\sqrt{2}\gamma}{2}\sigma_3} \left(I + \frac{i\sqrt{2}\gamma}{4}(\lambda+1)\sigma_3+O((\lambda+1)^2)\right) \nonumber\\
&  \qquad \qquad \times e^{-\frac{\pi\alpha i}{2} \sigma_3} 2^{\frac{\gamma}{\sqrt{2}i}\sigma_3} u^{\frac{\gamma}{2\sqrt{2}i}\sigma_3}(\lambda+1)^{\frac{\gamma}{\sqrt{2}i}\sigma_3} \left(1+\frac{1}{4}(\lambda+1) + O((\lambda+1)^2)\right)^{\frac{\gamma}{\sqrt{2}i}\sigma_3} \nonumber\\
&\qquad \qquad = \lambda^{-\frac{1}{4}\sigma_3}B 2^{-\frac{3\sqrt{2}\gamma i}{2}\sigma_3} e^{-\frac{\pi\alpha i}{2} \sigma_3} u^{-\frac{\sqrt{2}}{4}i\gamma\sigma_3} \left(I+\frac{i\sqrt{2}\gamma}{8}\sigma_3(\lambda+1) + O((\lambda+1)^2)\right) \label{estimation regarding M and h}
\end{eqnarray}
as $\lambda\to -1$. Since $\gamma$ is real and $u^{-\frac{\sqrt{2}}{4}i\gamma\sigma_3} $ is bounded, the above formula implies \eqref{Mehu}.

Finally, recalling the definition of $H$ in \eqref{H}, estimation of $E_1$ in \eqref{estimation of E1} and the asymptotics for $\Xi$ in \eqref{Xiasymp1}, we conclude that $H(x) = O((1-x)^{-1/4})$. The same argument also applies to $H^{-1}(x)$. This concludes the proof of the lemma.
\end{proof}

\begin{lemma}\label{lemma: estimation of H-1H'}
Let $H(z)$ be defined in \eqref{H}. For any small enough $\varepsilon>0$, we have
\begin{equation} \label{eq:lemma-estimation of H-1H'}
H^{-1}(x)H'(x) = \frac{i\sqrt{2}\gamma}{8}\frac{1}{1-x}\sigma_3 + A_{N, x} + O(1),  \qquad \textrm{as } N\to\infty,
\end{equation}
where $A_{N, x}$ is a matrix such that for sufficiently small $\varepsilon>0$,
\begin{equation}
\int_{1-\varepsilon}^x A_{N, \eta}d\eta = O(1), \qquad \textrm{as } N\to\infty.
\end{equation}
uniformly for $1-\varepsilon < x < 1- N^{-2}\log\log N$.
\end{lemma}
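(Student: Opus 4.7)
The plan is to compute $H^{-1}H'$ via the factorization
\[
H(z) = E_1(z)\cdot\Xi(\lambda_x(z))\cdot\hat{E}_{-1}(\lambda_x(z)),
\]
where $\hat{E}_{-1}(\lambda) := M(\lambda)e^{-\pi\alpha i/2\sigma_3}h_u(\lambda)^{\gamma/(\sqrt{2}i)\sigma_3}e^{2iu^{1/2}\sigma_3}$ is the combination from \eqref{P-1} (analytic at $\lambda=-1$), and then to identify which of the three summands produced by the product rule carries the announced $\frac{1}{1-x}\sigma_3$ singularity. The main observation is the analytic expansion \eqref{estimation regarding M and h},
\[
\hat{E}_{-1}(\lambda) = \lambda^{-\sigma_3/4}B\tilde{C}_0\bigl(I + \tfrac{i\sqrt{2}\gamma}{8}(\lambda+1)\sigma_3 + O((\lambda+1)^2)\bigr),
\]
with $\tilde{C}_0 = C_0\,e^{2iu^{1/2}\sigma_3}$ a constant diagonal matrix. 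Differentiating the linear correction picks up $\lambda_x'(x) = 1/(1-x)+O(1)$, so at $z=x$
\[
\hat{E}_{-1}^{-1}\hat{E}_{-1}'\bigl|_{z=x} = \tfrac{1}{4(1-x)}\tilde{C}_0^{-1}B^{-1}\sigma_3 B\tilde{C}_0 + \tfrac{i\sqrt{2}\gamma}{8(1-x)}\sigma_3 + O(1).
\]

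Next I would compute $E_1^{-1}E_1'$ using the factorization $E_1 = D_\infty^{\sigma_3}QKB^{-1}\lambda_x^{\sigma_3/4}$ from \eqref{eq: E1(z)-Lemma4.4}, with $K$ diagonal. The identity $Q^{-1}Q' = \frac{1}{2(1-z^2)}B\sigma_3 B^{-1}$ (obtained from $Q = Ba^{\sigma_3}B^{-1}$) together with the direct calculation of $\frac{d}{dz}\lambda_x^{\sigma_3/4}$ isolates a $\sigma_3$-piece equal to $\frac{\lambda_x'(x)}{4\lambda_x(x)}\sigma_3 = -\frac{1}{4(1-x)}\sigma_3 + O(1)$. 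After conjugation by $\hat{E}_{-1}(-1) = (-1)^{-\sigma_3/4}B\tilde{C}_0$, this $\sigma_3$-piece becomes $-\frac{1}{4(1-x)}\tilde{C}_0^{-1}B^{-1}\sigma_3 B\tilde{C}_0$, which exactly cancels the off-diagonal piece produced above, leaving the announced leading term $\frac{i\sqrt{2}\gamma}{8(1-x)}\sigma_3$. The remaining pieces will feed $A_{N,x}$: (i) the $Q^{-1}Q'$-part of $E_1^{-1}E_1'$ conjugated by $\hat{E}_{-1}(-1)$ produces an off-diagonal matrix of size $\frac{1}{1-x}$ with entries proportional to $k(x)^{\pm 2}d_1(x)^{\pm 2}$, where $d_1^{\pm 2}(x)$ contains the rapidly-oscillatory factor $e^{\pm 4iu_{N,x}^{1/2}}u_{N,x}^{\mp i\sqrt{2}\gamma/2}$; (ii) the $K^{-1}K'$-part is $O(1)$ at $z=x$, which I verify using $(AB_0^2(x))^{i\sqrt{2}\gamma/2}=1+O(1-x)$ deduced from \eqref{asymptotics that help us to estimate F}; and (iii) the middle $\Xi$-term is $O(u_{N,x}^{-1/2}/(1-x)) = O(N^{-1}(1-x)^{-3/2})$ by \eqref{Xiasymp1}.

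To verify $\int_{1-\varepsilon}^x A_{N,\eta}\,d\eta = O(1)$: for contribution (i), the phase derivative of $e^{4iu_{N,\eta}^{1/2}}$ is of order $N/\sqrt{1-\eta} \gtrsim N^2/\sqrt{\log\log N}$ throughout the regime $1-\eta>N^{-2}\log\log N$, so a single integration by parts bounds the integral by the boundary term $O(1/(N\sqrt{1-x})) = O(1/\sqrt{\log\log N}) = o(1)$; for (iii), $\int N^{-1}(1-\eta)^{-3/2}\,d\eta = O(N^{-1}(1-x)^{-1/2}) = o(1)$ in the same regime. The main obstacle is the cancellation described in Step 2: verifying that the $-\frac{1}{4(1-x)}\sigma_3$ piece of $E_1^{-1}E_1'$ exactly neutralizes the off-diagonal $\frac{1}{4(1-x)}\tilde{C}_0^{-1}B^{-1}\sigma_3 B\tilde{C}_0$ from $\hat{E}_{-1}^{-1}\hat{E}_{-1}'$, since this cancellation requires careful bookkeeping of the conjugations through the non-diagonal factor $B$ together with the $(-1)^{\pm\sigma_3/4}$ prefactor of $\hat{E}_{-1}(-1)$, and relies on the identities $B\sigma_3 B^{-1}=\sigma_2$ and $\sigma_3$-commutation with the remaining diagonal factors. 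A secondary technical point is confirming that the oscillatory integration-by-parts estimate in (i) is uniform in $x$ throughout $(1-\varepsilon,1-N^{-2}\log\log N)$.
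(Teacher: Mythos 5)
Your proposal is correct and follows essentially the same route as the paper's proof: the Taylor expansion in \eqref{estimation regarding M and h} of the combination $M(\lambda)e^{-\pi\alpha i/2\,\sigma_3}h_u(\lambda)^{\gamma/(\sqrt2 i)\sigma_3}$ around $\lambda=-1$ producing the $\tfrac{i\sqrt2\gamma}{8(1-x)}\sigma_3$ leading term, the off-diagonal $Q_+^{-1}Q_+'=\tfrac{i}{2(1-x^2)}\bigl(\begin{smallmatrix}0&1\\-1&0\end{smallmatrix}\bigr)$ conjugated by the $u$-dependent diagonal phases as the oscillatory contribution $A_{N,x}$, the $O(N^{-1}(1-x)^{-3/2})$ bound for the $\Xi$-term via \eqref{Xiasymp1}, and a single integration by parts against the phase $e^{\pm 4iu_{N,\eta}^{1/2}}$ to control $\int A_{N,\eta}\,d\eta$. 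The one genuine organizational difference is how the $\lambda_x^{\pm\sigma_3/4}$ factors are handled. You keep the factorization $H=E_1\cdot\Xi\cdot\hat E_{-1}$, differentiate each piece, and then exhibit an explicit cancellation between the $-\tfrac{1}{4(1-x)}\sigma_3$ from $\tfrac{d}{dz}\lambda_x^{\sigma_3/4}$ in $E_1^{-1}E_1'$ (after conjugation by $\hat E_{-1}(-1)$) and the $+\tfrac{1}{4(1-x)}\tilde C_0^{-1}B^{-1}\sigma_3B\tilde C_0$ from the $\lambda^{-\sigma_3/4}$ prefactor of $\hat E_{-1}$. The paper instead regroups \emph{before} differentiating: writing $G=D_\infty^{\sigma_3}QK\hat\Xi$ with $\hat\Xi(z)=B^{-1}\lambda_x(z)^{\sigma_3/4}\Xi(\lambda_x(z))\lambda_x(z)^{-\sigma_3/4}B$, the two quarter-powers cancel multiplicatively, and the only residue of $\lambda_x^{\pm\sigma_3/4}$ in $\hat\Xi^{-1}\hat\Xi'$ is a commutator $\tfrac{\lambda_x'}{4\lambda_x}[\sigma_3,\lambda_x^{\sigma_3/4}\Xi\lambda_x^{-\sigma_3/4}]$, which is automatically $O(N^{-1}(1-x)^{-3/2})$ because $\Xi=I+O(u^{-1/2})$, so no $\tfrac{1}{1-x}$ pole ever appears that has to be cancelled. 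Both bookkeepings are correct; the paper's is cleaner because it packages the cancellation into the definition rather than into a computation, while yours makes the cancellation visible via the identity $B^{-1}\sigma_3B=-\sigma_2$ together with the fact that $(-1)^{\pm\sigma_3/4}$ is diagonal and hence commutes with $\sigma_3$. Two small points: the inequality for the phase derivative should read $r'(\eta)\sim N(1-\eta)^{-1/2}\gtrsim N$ (it is bounded below by $cN$ on the whole regime, not bounded below by $N^2/\sqrt{\log\log N}$), and the notations $k(x)$, $d_1(x)$, $AB_0^2(x)$ are not defined in the paper, though their intent (the diagonal entries of $K$ and $D_\gamma$, and the expression in \eqref{asymptotics that help us to estimate F}) is clear.
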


\begin{proof}
Using the definitions of $E_1(z)$ in \eqref{E} and $F(z)$ in \eqref{F}, we introduce
\begin{equation}
G(z) = E_1(z)\Xi(\lambda_x(z)) \lambda_x(z)^{-\sigma_3/4} B = D_\infty^{\sigma_3}Q(z)K(z)\hat{\Xi}(z)
\end{equation}
with
\begin{equation}
\hat{\Xi}(z) = B^{-1}\lambda_x(z)^{\frac{1}{4}\sigma_3}\Xi(\lambda_x(z)) \lambda_x(z)^{-\frac{1}{4}\sigma_3}B.
\end{equation}
From \eqref{H} and \eqref{estimation regarding M and h}, we have
\begin{equation}
H(z) = G(z)\cdot 2^{-\frac{3\sqrt{2}}{2}i\gamma\sigma_3} e^{-\frac{\pi\alpha i}{2} \sigma_3} u^{-\frac{\sqrt{2}}{4}i\gamma\sigma_3} \left(I+\frac{i\sqrt{2}\gamma}{8}\sigma_3(\lambda+1) + O((\lambda+1)^2)\right)e^{2iu^{1/2}\sigma_3},
\end{equation}
with $\lambda = \lambda_x(z)$ defined in \eqref{lambdadef}. Taking $z\to x$ as described above, i.e., in the upper half-plane and outside the lens, then for some fixed small $\varepsilon > 0$, we have
\begin{eqnarray}
& & H^{-1}(x)H'(x) \nonumber\\
&=& e^{-2iu^{1/2}\sigma_3} u^{\frac{\sqrt{2}}{4}i\gamma\sigma_3 } e^{\frac{\pi\alpha i}{2} \sigma_3} 2^{\frac{3\sqrt{2}}{2}i\gamma\sigma_3} G^{-1}(x)G'(x) 2^{-\frac{3\sqrt{2}}{2}i\gamma\sigma_3} e^{-\frac{\pi\alpha i}{2} \sigma_3} u^{-\frac{\sqrt{2}}{4}i\gamma\sigma_3} e^{2iu^{1/2}\sigma_3} \nonumber\\
& & +\frac{i\sqrt{2}\gamma}{8}\cdot \frac{1}{1-x}\sigma_3 + O(1), \qquad \textrm{as } N\to\infty, \label{HH}
\end{eqnarray}
uniformly for $x\in (1-\varepsilon, 1-N^{-2}\log \log N)$. In the first term on the right-hand side, all factors except $G^{-1}(x)G'(x)$ are bounded as $N \to \infty$.  From the asymptotics of $\Xi$ in \eqref{Xiasymp1} and $\lambda$ in \eqref{lambdaasymp}, we have
\begin{eqnarray}
\hat{\Xi}(x) = I + O(u_{N, x}^{-1/2}).
\end{eqnarray}
To estimate $G^{-1}(x)G'(x)$, we first consider $\hat{\Xi}^{-1}(x) \hat{\Xi}'(x)$. It is direct to see that $\frac{d}{d\lambda} \left(\lambda^{\frac{1}{4}\sigma_3} \Xi(\lambda) \lambda^{-\frac{1}{4}\sigma_3}\right)$ is of order $O(u_{N, x}^{-1/2})$
as $N\to \infty$, where $u_{N, x} = O(N^2 (1-x))$. Then, we have
\begin{eqnarray}
\hat{\Xi}'(x) = B^{-1} \frac{d}{d\lambda} \left(\lambda^{\frac{1}{4}\sigma_3} \Xi(\lambda) \lambda^{-\frac{1}{4}\sigma_3}\right) \lambda'_x(x) B
= O(u^{-1/2} (1-x)^{-1}),
\end{eqnarray}
and hence
\begin{equation}
\hat{\Xi}(x)^{-1} \hat{\Xi}'(x) = O(u^{-1/2} (1-x)^{-1}) = O(N^{-1} (1-x)^{-3/2})
\end{equation}
as $N\to\infty$. Since both $K(x)$ and $K'(x)$ are bounded as $x\to 1$, we obtain, for any $\varepsilon > 0$
\begin{eqnarray}
& & (G_+^{-1}G'_+)(x) \nonumber\\
&=& \hat{\Xi}_+^{-1}(x)K_+^{-1}(x)Q_+^{-1}(x) \left(Q'_+(x)K_+(x)\hat{\Xi}_+(x) + Q_+(x)K'_+(x)\hat{\Xi}_+(x) + Q_+(x)K_+(x)\hat{\Xi}'_+(x)\right)\nonumber\\
&=& \hat{\Xi}_+^{-1}(x)K_+^{-1}(x)Q_+^{-1}(x)Q'_+(x)K_+(x)\hat{\Xi}_+(x) + \hat{\Xi}_+^{-1}(x)K_+^{-1}(x)K'_+(x)\hat{\Xi}_+(x) + \hat{\Xi}_+^{-1}(x)\hat{\Xi}'_+(x)\nonumber \\
&=& Q_+^{-1}(x)Q'_+(x)\left(I+ O(u_{N, x}^{-1/2}) + O((1-x)^{1/2})\right) + O(1) + O(N^{-1} (1-x)^{-3/2}), \label{eq:G-estimate}
\end{eqnarray}
as $N\to\infty$, uniformly for $x\in (1-\varepsilon, 1- N^{-2}\log\log N)$. With the explicit expression of $Q$ in \eqref{Q}, we have 
\begin{eqnarray}
Q_+^{-1}(x)Q'_+(x)= \frac{i}{2(1-x^2)}\left(
                                    \begin{array}{cc}
                                      0 & 1 \\
                                      -1 & 0 \\
                                    \end{array}
                                  \right).
\end{eqnarray}
Combining this with \eqref{HH} and \eqref{eq:G-estimate}, we conclude that, as $N\to\infty$, 
\begin{eqnarray}
& & H^{-1}(x)H'(x) \nonumber\\
&=& {\small \frac{i\sqrt{2}\gamma}{8}\frac{1}{1-x}\sigma_3 + \frac{i}{2(1-x^2)}
\left(
\begin{array}{cc}
0 & e^{-4iu_{N, x}^{1/2}}u_{N, x}^{\frac{\sqrt{2}}{2}i\gamma} e^{\pi \alpha i} 2^{3\sqrt{2}i\gamma} \\
-e^{4i u_{N, x}^{1/2}}u_{N, x}^{-\frac{\sqrt{2}}{2}i\gamma} e^{-\pi\alpha i} 2^{-3\sqrt{2}i\gamma} & 0
\end{array}
\right) } \nonumber\\
& & + O(N^{-1}(1-x)^{-3/2}) + O(1),
\end{eqnarray}
uniformly for $x\in (1-\varepsilon, 1- N^{-2}\log\log N)$.

To derive the asymptotics of the Hankel determinants, we need an approximation for the integration of the above expression with respect to  $x$. Regarding the term $O(N^{-1}(1-x)^{-3/2}) $, integrating from $1-\varepsilon$ to $x$ yields a term of order $O(N^{-1} (1-x)^{-1/2})$, which is bounded for $x\in (1-\varepsilon, 1- N^{-2}\log\log N)$. Our final task is to show that the integration of the second term in the above approximation is bounded. More precisely, we need to prove that
\begin{equation} \label{eq:prop-4.5-estimate}
\int_{1-\varepsilon}^{x} e^{2\pi i r_N(\eta)}\frac{d\eta}{1-\eta^2} = O(1), \qquad \textrm{as } N \to \infty,
\end{equation}
where 
\begin{equation}
r_N(\eta) = -\frac{2}{\pi}u_{N,\eta}^{1/2} - \frac{\sqrt{2}\gamma}{4\pi}\log u_{N,\eta}.
\end{equation}
Recalling \eqref{eq:f-expan} and \eqref{def:un-x},  we have
\begin{equation*}
u_{N,\eta} = -N^2 f(\eta) = -N^2\left(\frac{1}{2}(\eta - 1) - \frac{1}{12}(\eta - 1)^2 + O((\eta-1)^3)\right).
\end{equation*}
Thus, we find
\begin{equation}
r_N'(\eta) = - \left( \frac{1}{\pi}u_{N,\eta}^{-1/2} - \frac{\sqrt{2}\gamma}{4\pi u_{N, \eta}} \right) \frac{d}{d\eta} u_{N,\eta} = \frac{\sqrt{2}N}{2\pi (1-\eta)^{1/2}}\left( 1+O\left(\frac{1}{N(1-\eta)^{1/2}}\right)\right),
\end{equation}
as $N\to\infty$, uniformly for $\eta\in (1-\varepsilon, 1-N^{-2}\log \log N)$. Based on this approximation, we have
\begin{equation}
\int_{1-\varepsilon}^x e^{2\pi i r_N(\eta)}\frac{d\eta}{1-\eta^2} = \int_{1-\varepsilon}^x e^{2\pi i r_N(\eta)} \frac{r'_N(\eta) \cdot 2\pi(1-\eta)^{1/2}}{\sqrt{2}N(1-\eta^2)} d\eta + O\left(\frac{1}{N}\int_{1-\varepsilon}^x \frac{1}{(1-\eta)^{3/2}}d\eta\right).
\end{equation}
The last term is $O(N^{-1}(1-x)^{-1/2})$, which is of order $O(1)$ (or more precisely, of order $O((\log\log N)^{-1/2})$.  For the first term, an integration by parts gives us
\begin{eqnarray}
& &\int_{1-\varepsilon}^x e^{2\pi i r_N(\eta)}\frac{\sqrt{2}\pi r'_N(\eta) \cdot (1-\eta)^{1/2}}{N(1-\eta^2)} d\eta\nonumber \\
&=& \left.\sqrt{2}ie^{2\pi i r_N(\eta) }\frac{\sqrt{2}\pi  \cdot (1-\eta)^{1/2}}{N(1-\eta^2)}\right|_{1-\varepsilon}^x - \frac{\sqrt{2}\pi}{N} \int_{1-\varepsilon}^x e^{2\pi i r_N(\eta)} \frac{d}{d\eta} \left(\frac{(1-\eta)^{1/2}}{1-\eta^2}\right)d\eta.
\end{eqnarray}
Both terms in the above expression are of order $O(N^{-1}(1-x)^{-1/2})$, which is bounded as $N \to \infty $ uniformly for $x\in (1-\varepsilon, 1-N^{-2}\log\log N)$. Thus, we have established the approximation \eqref{eq:prop-4.5-estimate} and completed the proof of the lemma.
\end{proof}

Now we have the following asymptotics for the Hankel determinants in the edge regime.

\begin{proposition}\label{Asymptotics in the edge}
Let $\gamma\in [-M, M]$ with $M > 0$ be a positive constant, $t(x)$ be real analytic on $[-1,1]$, and $D_N(x; \gamma; t)$ be defined in \eqref{eq2}. Then, as $N\to\infty$, we have
\begin{equation}
\log \frac{D_N(x; \gamma; t)}{D_N(x; 0; t)} = \sqrt{2}\pi\gamma N\int_{-1}^x \pi^{-1} (1-s^2)^{-\frac{1}{2}}ds+ \frac{\gamma^2}{2}\log N+\frac{\gamma^2}{4}\log (1-x^2) + O(1), \label{eq3}
\end{equation}
where the error term is uniform for $|x| \leq 1- N^{-2}\log\log N$.
\end{proposition}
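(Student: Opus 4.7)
The plan is to integrate the differential identity \eqref{di2} in $x$ and evaluate the integrand asymptotically using the steepest descent results of Section \ref{Steepest Descent Analysis for the RH Problem}. The key initial observation is that $D_N(x;0;t)$ does not depend on $x$ (the indicator in the weight is vacuous when $\gamma=0$), and $D_N(-1;\gamma;t)=D_N(-1;0;t)$ since no eigenvalue lies below $-1$, hence
\begin{equation*}
\log\frac{D_N(x;\gamma;t)}{D_N(x;0;t)}
= \log D_N(x;\gamma;t)-\log D_N(-1;\gamma;t)
= \int_{-1}^{x} \frac{d}{ds}\log D_N(s;\gamma;t)\,ds.
\end{equation*}
I would split the range of integration at $-1+\delta$ and $1-\delta$, with $\delta>0$ a small but fixed constant, and handle the bulk and edge subintervals separately.

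On the bulk piece $[-1+\delta,\,1-\delta]$, the jump point $s$ remains bounded away from $\pm 1$, so the single-jump specialization of the Fisher--Hartwig asymptotics in \cite[Theorem 1.3]{CG2021} (with a single Fisher--Hartwig singularity of strength $\beta_1=-i\sqrt{2}\gamma/2$ at $t_1=s$) yields directly
\begin{equation*}
\log\frac{D_N(1-\delta;\gamma;t)}{D_N(-1+\delta;\gamma;t)}
= \sqrt{2}\gamma N\!\int_{-1+\delta}^{1-\delta}\!\frac{ds}{\sqrt{1-s^2}}
+ \tfrac{\gamma^2}{2}\log N + \tfrac{\gamma^2}{4}\log\tfrac{(1-(1-\delta)^2)}{(1-(-1+\delta)^2)} + O(1).
\end{equation*}
This produces the $\frac{\gamma^2}{2}\log N$ term in \eqref{eq3} and the bulk portion of $\frac{\gamma^2}{4}\log(1-x^2)$.

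On the right-edge piece $[1-\delta,\,x]$ (the left-edge piece is handled symmetrically using the mirror construction near $-1$), I would trace back the transformations $Y\mapsto T\mapsto S\mapsto R$ to express, for $z\to s$ from outside the lens in the upper half plane,
\begin{equation*}
(Y^{-1}(z)Y'(z))_{21} = \varphi(z)^{-2N}\bigl(S^{-1}(z)S'(z)\bigr)_{21},
\end{equation*}
and plug in \eqref{eq:Sec4.3-S-formula}. Using the small-norm estimate \eqref{Rasymp} for $R^{-1}R'$, the boundedness of $H^{\pm 1}$ up to size $(1-s)^{-1/4}$ from Lemma \ref{lemma: estimation of H}, and the expansion of $H^{-1}H'$ in Lemma \ref{lemma: estimation of H-1H'}, together with the explicit value of $\Phi_{\mathrm{HG}}$ and its derivative at a regular point in $h_u$-coordinates, the $(2,1)$ entry above produces, after multiplication by the prefactor $-(1-s)^\alpha(1+s)^\beta e^{t(s)}\frac{1-e^{\sqrt{2}\pi\gamma}}{2\pi i}$, an expression of the form
\begin{equation*}
\frac{d}{ds}\log D_N(s;\gamma;t) = \frac{\sqrt{2}\gamma N}{\pi\sqrt{1-s^2}}\cdot \pi \;+\; \frac{\gamma^2}{2(1-s)} \;+\; \mathcal{A}_{N,s} \;+\; O(1),
\end{equation*}
where $\mathcal{A}_{N,s}$ is the oscillatory piece whose antiderivative on $[1-\delta,x]$ is $O(1)$ by the integration-by-parts argument already carried out in \eqref{eq:prop-4.5-estimate}. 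Integrating and matching the boundary term at $s=1-\delta$ with the bulk estimate yields the full $\frac{\gamma^2}{4}\log(1-x)$ contribution (the $\delta$-dependent pieces cancel into the $O(1)$ error), and the symmetric treatment near $-1$ supplies $\frac{\gamma^2}{4}\log(1+x)$, thereby assembling \eqref{eq3}.

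The main obstacle is the accurate extraction of the singular $\frac{\gamma^2}{2(1-s)}$ term from $(S^{-1}S')_{21}$. Because $H$ can grow like $(1-s)^{-1/4}$, conjugation by $H$ could in principle inflate errors to size $(1-s)^{-1/2}$, and the remainder $A_{N,\eta}$ in Lemma \ref{lemma: estimation of H-1H'} carries the oscillatory factor $e^{\pm 4i u_{N,s}^{1/2}} u_{N,s}^{\mp\sqrt{2}i\gamma/2}$ together with a potentially divergent prefactor $N^{-1}(1-s)^{-3/2}$. Showing that the off-diagonal entries of $\Phi_{\mathrm{HG}}(h_{u_{N,x}}(\lambda_x(z)))$ pair exactly with $A_{N,s}$ so that only the diagonal scalar $\frac{i\sqrt{2}\gamma}{8(1-s)}\sigma_3$ of Lemma \ref{lemma: estimation of H-1H'} survives after taking the $(2,1)$ entry and combining with the prefactor, and that the remaining oscillatory contribution integrates uniformly to $O(1)$ up to $s=1-N^{-2}\log\log N$, is the technical heart of the argument. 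This parallels the soft-edge computation in the proof of \cite[Thm.~1.10]{CFL2021}, with the Airy parametrix replaced by the confluent hypergeometric parametrix and the Bessel factor dictated by the hard edge.
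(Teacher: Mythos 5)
Your overall strategy — integrate the differential identity, split at $1-\delta$, and handle the edge piece via the expansion of $S^{-1}S'$ through the $\Phi_{\mathrm{HG}}$, $H$, and $R$ factors together with Lemmas~\ref{lemma: estimation of H} and \ref{lemma: estimation of H-1H'} and the oscillatory integral argument \eqref{eq:prop-4.5-estimate} — matches the paper on the right-edge piece $[1-\delta,x]$. But the way you assemble the global answer has a genuine gap, and the way you source the $\tfrac{\gamma^2}{2}\log N$ term is incorrect.

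\textbf{The bulk formula is wrong.} Both $D_N(1-\delta;\gamma;t)$ and $D_N(-1+\delta;\gamma;t)$ have a single Fisher--Hartwig jump in the bulk; applying \cite[Theorem~1.3]{CG2021} to each and subtracting, the $\tfrac{\gamma^2}{2}\log N$ contributions cancel exactly. So
\begin{equation*}
\log\frac{D_N(1-\delta;\gamma;t)}{D_N(-1+\delta;\gamma;t)}
= \sqrt{2}\gamma N\!\int_{-1+\delta}^{1-\delta}\!\frac{ds}{\sqrt{1-s^2}}
+ \tfrac{\gamma^2}{4}\log\tfrac{1-(1-\delta)^2}{1-(-1+\delta)^2} + O(1),
\end{equation*}
with no $\tfrac{\gamma^2}{2}\log N$ term. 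The $\tfrac{\gamma^2}{2}\log N$ in \eqref{eq3} actually enters only when you compare a determinant \emph{with} a jump to one \emph{without}.

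\textbf{Integrating from $s=-1$ is not supported by the analysis.} You propose to obtain the $\tfrac{\gamma^2}{4}\log(1+x)$ part from a ``symmetric treatment near $-1$'' over the strip $[-1,-1+\delta]$. By symmetry with the right edge, the derivative near the left edge behaves like $\sqrt{2}\gamma N(1-s^2)^{-1/2} + \tfrac{\gamma^2}{4(1+s)} + \cdots$; the second term is not integrable at $s=-1$, and moreover the edge parametrix of Section~\ref{sec:edge-analysis} is built on the model RH problem \ref{model rhp for Phi}, whose solution is shown to exist only for $u$ large, i.e.\ for $s$ bounded away from $\pm 1$ by at least $N^{-2}\log\log N$. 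Thus the expansion you would need on $(-1,-1+N^{-2}\log\log N)$ is simply not available, and the integral would appear to diverge at the lower limit.

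The paper sidesteps both issues: it integrates the edge derivative only over $[1-\varepsilon, x]$ and, instead of integrating across $[-1,1-\varepsilon]$, directly compares the two Hankel determinants $D_N(1-\varepsilon;\gamma;t)$ (single FH jump in the bulk) and $D_N(x;0;t)$ (no singularity) via \cite[Theorem~1.3]{CG2021}. That comparison is what produces the lone $\tfrac{\gamma^2}{2}\log N$, without any edge analysis at $-1$. (A minor additional point: your displayed derivative has $+\tfrac{\gamma^2}{2(1-s)}$; the correct coefficient, obtained by combining \eqref{logDN1-appro2} with the prefactor in \eqref{logDN1}, is $-\tfrac{\gamma^2}{4(1-s)}$, which indeed integrates to $\tfrac{\gamma^2}{4}\log(1-x)$.)
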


\begin{proof}
We first consider $ x \in [1-\varepsilon , 1- N^{-2} \log\log N]$, , which lies in the edge regime analyzed in Sec. \ref{sec:edge-analysis}. Tracing back the transformations $Y\mapsto T \mapsto S$ in \eqref{transform: Y-to-T} and \eqref{eq:T-S-map}, we have $Y(z) = 2^{-N\sigma_3} S(z) \varphi(z)^{N\sigma_3}$  for $z\in B(1, \delta)$, so that
\begin{equation*}
\left(Y^{-1}(z) Y'(z)\right)_{2, 1} = \Big( \varphi(z)^{-N \sigma_3} S^{-1}(z) S'(z) \varphi(z)^{N \sigma_3} \Big)_{2, 1}.
\end{equation*}
Using the differential identity \eqref{di2} and the above formula, we obtain
\begin{eqnarray}\label{edge: derivative of log DN}
\frac{d}{dx}\log D_N(x; \gamma; t) = \frac{i}{2\pi}(1-e^{\sqrt{2}\pi\gamma})\varphi_+(x)^{2N}(1-x)^{\alpha}(1+x)^{\beta} e^{t(x)} \left(S_+(x)^{-1}S'_+(x)\right)_{2, 1},
\end{eqnarray}
where $S_+(x)$ denotes the limit from outside the lens in the upper half-plane. We recall the expression of $S(z)$ in \eqref{eq:Sec4.3-S-formula}. From \eqref{eq:W-wj-relation}, for any matrix, we know
\begin{equation*}
\Big(W_{+}(x)^{\sigma_3} e^{-\frac{\alpha}{2}\pi i \sigma_3} M W_{+}(x)^{-\sigma_3} e^{\frac{\alpha}{2}\pi i \sigma_3} \Big)_{2,1} = M_{2,1}(1-x)^{-\alpha}(1+x)^{-\beta} e^{-t(x)}.
\end{equation*}
Similarly, using \eqref{def:un-x}, we have 
\begin{equation*}
\Big(e^{\sqrt{2}\pi\frac{\gamma}{4}\sigma_3} e^{2(\lambda_x(z) u_{N, x})^{1/2} \sigma_3} M  e^{-\sqrt{2}\pi\frac{\gamma}{4}\sigma_3} e^{-2(\lambda_x(z) u_{N, x})^{1/2} \sigma_3}  \Big)_{2,1} = M_{2,1} e^{-\frac{\pi\gamma}{\sqrt{2}}} \varphi_+(x)^{-2N}.
\end{equation*}
Since $\lambda_x(x) = -1$ and $h_u(-1) = 0$, then we have from the above three formulas that
\begin{eqnarray}
& &\frac{d}{dx}\log D_N(x; \gamma; t) = \frac{i}{2\pi}\left(e^{-\pi\gamma/\sqrt{2}}-e^{\pi\gamma/\sqrt{2}}\right)  \left[\left.\frac{d}{dz}h_{u_N,x}(\lambda_x(z))\right|_{z=x} \left(\Phi_{HG,+}^{-1}(0)\Phi'_{HG,+}(0)\right)_{2, 1}\right. \nonumber\\
& &\qquad \qquad \qquad +\left(\Phi_{HG,+}^{-1}(0) H^{-1}(x) H'(x) \Phi_{HG,+}(0)\right)_{2, 1} \nonumber\\
& &\qquad \qquad \qquad  +\left.\left(\Phi_{HG,+}^{-1}(0) H^{-1}(x) R_+^{-1}(x)R'_+(x) H(x) \Phi_{HG,+}(0)\right)_{2, 1}\right].\label{logDN1}
\end{eqnarray}

To compute the first term in the bracket in \eqref{logDN1}, we apply \eqref{Appendix: HG Lemma} with $\tilde{\beta}=\frac{\gamma}{\sqrt{2}i}$ to get 
\begin{equation}
\lim_{z\to 0}\left(\Phi_{HG,+}^{-1}(z)\frac{d}{dz}\Phi_{HG,+}(z)\right)_{2, 1} = \frac{\sqrt{2}\pi \gamma i}{e^{\pi \gamma/ \sqrt{2}}-e^{-\pi \gamma/ \sqrt{2}}}.
\end{equation}
Using the asymptotics of $h_u(x)$ in \eqref{hasymp} and the definitions of $u_{N, x}$ and $\lambda_{x}(z)$ from \eqref{def:un-x} and \eqref{lambdadef}, we obtain
\begin{equation}
\left.\frac{d}{dz}h_{u_{N, x}} \left(\lambda_x(z)\right)\right|_{z=x} = 2u_{N, x}^{1/2}\left.\frac{d}{dz} \left(\lambda_x(z)\right)\right|_{z=x} = 2N(1-x^2)^{-1/2}. 
\end{equation}
Combining these two formula, we have
\begin{equation} \label{logDN1-appro1}
\left. \frac{d}{dz}h_{u_N,x}(\lambda_x(z))\right|_{z=x} \left( \Phi_{HG,+}^{-1}(0)\Phi'_{HG,+}(0) \right)_{2, 1} = \sqrt{2}\gamma N(1-x)^{-1/2}.
\end{equation}
For the second term in the bracket in \eqref{logDN1}, we recall the estimate of $H^{-1}(x)H'(x) $ in Lemma \ref{lemma: estimation of H-1H'}. From \eqref{Appendix: HG specific value}, we have
\begin{eqnarray}
\left(\Phi_{HG, +}^{-1}(0) \sigma_3 \Phi_{HG, +}(0)\right)_{2, 1} & =&  
-2 \lim_{z\to 0, \Im z>0} \Phi_{HG, 11}(z)\Phi_{HG, 21}(z) \nonumber\\
&=& -2 \Gamma(1-\beta) \Gamma(1+\beta) =  \frac{-2\sqrt{2}\pi \gamma}{e^{\pi \gamma / \sqrt{2}} - e^{-\pi \gamma / \sqrt{2}}}.
\end{eqnarray}
It then follows from \eqref{eq:lemma-estimation of H-1H'} and the above formula that
\begin{equation} \label{logDN1-appro2}
\left(\Phi_{HG,+}^{-1}(0) H^{-1}(x) H'(x) \Phi_{HG,+}(0)\right)_{2, 1} = \frac{\gamma^2}{4}\frac{1}{1-x} + B_{N, x} + O(1), \quad \textrm{as } N\to\infty, 
\end{equation}
uniformly for $1-\varepsilon < x < 1- N^{-2} \log\log N$, where $B_{N, x}$ satisfies the approximation 
\begin{equation} \label{eq:BNx-integral}
\int_{1-\varepsilon}^x B_{N, \eta} d\eta = O(1).
\end{equation}
Regarding the last term in the bracket in \eqref{logDN1}, the asymptotics of $R(z)$ in \eqref{Rasymp} and $H(x)$ in Lemma~\ref{lemma: estimation of H} imply
\begin{equation}\label{the third term to evaluate in the edge regime}
\left(\Phi_{HG,+}^{-1}(0) H^{-1}(x) R_+^{-1}(x)R'_+(x) H(x) \Phi'_{HG,+}(0)\right)_{2, 1} = O(N^{-1} (1-x)^{-1/2}),
\end{equation}
which is of order $O(1)$ for $ x \in [1-\varepsilon,  1- N^{-2} \log\log N]$.

Thus, combining \eqref{logDN1-appro1}, \eqref{logDN1-appro2} and \eqref{the third term to evaluate in the edge regime}, we obtain
\begin{eqnarray}
\frac{d}{dx}\log D_N(x;\gamma;0) =\sqrt{2}\gamma N(1-x^2)^{-1/2} - \frac{\gamma^2}{4(1-x)} + B_{N, x} + O(1), \quad \textrm{as } N\to\infty, 
\end{eqnarray}
uniformly for $ x \in [1-\varepsilon,  1- N^{-2} \log\log N]$. Integrating the above formula from $1-\varepsilon$ to $x$ and using \eqref{eq:BNx-integral}, we have
\begin{equation}
\log \frac{D_N(x; \gamma; t) }{D_N(1-\varepsilon; \gamma; t)} =  \sqrt{2}\pi \gamma N \int_{1-\varepsilon}^x \pi^{-1} (1-\eta^2)^{-1/2}d\eta + \frac{\gamma^2}{4}\log (1-x) + O(1)
\end{equation}
as $N\to\infty$, uniformly for $ x \in [1-\varepsilon,  1- N^{-2} \log\log N]$.

Finally, to obtain the result in \eqref{eq3}, we apply \cite[Theorem 1.3]{CG2021} to derive the asymptotics of $D_N(1-\varepsilon; \gamma; t)$ and $D_N(x; 0; t)$. For $D_N(1-\varepsilon; \gamma; t)$, which corresponds to a weight function with a jump at $1-\varepsilon$, we set $m=1, \alpha_0 = \alpha,\alpha_1 = 0, \alpha_2 = \beta, V(x)=1, \psi(x) = 1/ \pi$ and $t_1 = 1-\varepsilon, \beta_1 = -\frac{i\sqrt{2}\gamma}{2}$ in the theorem. While for $D_N(x; 0; t)$, where the weight function is continuous on $[-1,1]$, we simply set $m=0$ and keep the remaining parameters. Then, we have
\begin{eqnarray}
\log \frac{D_N (1-\varepsilon; \gamma; t)}{D_N(x; 0; t)} = \sqrt{2}\pi \gamma N \int_{-1}^{1-\varepsilon} \pi^{-1} (1-\eta^2)^{-1/2}d\eta + \frac{\gamma^2}{2}\log N + O(1)
\end{eqnarray}
as $N\to\infty$. Combining the above two equations, we get
\begin{equation}
\log \frac{D_N(x; \gamma; t)}{D_N(x; 0; t)} = \sqrt{2}\pi\gamma N\int_{-1}^x \pi^{-1} (1-s^2)^{-\frac{1}{2}}ds+ \frac{\gamma^2}{2}\log N+\frac{\gamma^2}{4}\log (1-x) + O(1)
\end{equation}
as $N\to\infty$, uniformly for $ x \in [1-\varepsilon,  1- N^{-2} \log\log N]$. A similar result holds near the left edge $x=-1$; by replacing $\log (1-x)$ with $\log (1-x^2)$, we unify both results to yield \eqref{eq3} for $ x \in [1-\varepsilon,  1- N^{-2} \log\log N]$. Finally, it is easy to see from \cite[Theorem 1.3]{CG2021} that \eqref{eq3} also holds for $x$ in compact subsets of $[-1,1]$. This completes the proof of the proposition.
\end{proof}

\section{Eigenvalue Rigidity} \label{Eigenvalue Rigidity}

Recall that the exponential moments are related to Hankel determinants through \eqref{Heine Identity}. Using the asymptotics of Hankel determinants established in the previous section, we will demonstrate that the random measure $d\mu_N^\gamma(x) $ in \eqref{eq:dmu-N} converges to a GMC measure as $N \to \infty$.

\subsection{Estimation on the Exponential Moments of $h_N(x)$}

We begin by verifying that $h_N$, defined in \eqref{hN}, satisfies \cite[Assumption 3.1]{CFL2021}. Specifically, we prove the following proposition.

\begin{proposition}\label{Assumption 3.1}
Let $A$ be any compact set in  $(-1, 1)$ with positive Lebesgue measure. For any $\gamma>0$ and sufficiently large $N$, there exists a constant $C_{\gamma, A} > 0$ such that
\begin{equation} \label{prop5-1-eq1}
\mathbb{E} [ e^{\gamma h_N(x)}] \geq C_{\gamma, A} N^{\gamma^2 / 2} \qquad \textrm{ for all } x\in A.
\end{equation}
Moreover, there exists a constant $R_{\gamma} > 0$ such that 
\begin{equation} \label{prop5-1-eq2}
\mathbb{E} [ e^{\gamma h_N(x)}] \leq R_{\gamma} N^{\gamma^2 / 2} \qquad \textrm{for } x \in (-1, 1).
\end{equation}
The same bounds hold for $-h_N(x)$.
\end{proposition}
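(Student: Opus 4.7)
The starting point is Heine's identity \eqref{Heine Identity} applied to $w(\lambda)=\pm\sqrt{2}\pi\gamma\,\mathbf{1}_{\lambda\leq x}$; combined with the definition of $h_N$ in \eqref{hN}, this yields
\begin{equation*}
\mathbb{E}\bigl[e^{\pm\gamma h_N(x)}\bigr] \;=\; e^{\mp\sqrt{2}\pi\gamma N F(x)}\,\frac{D_N(x;\pm\gamma;t)}{D_N(x;0;t)}.
\end{equation*}
Inserting the asymptotic expansion \eqref{eq3} from Proposition \ref{Asymptotics in the edge}, the $\pm\sqrt{2}\pi\gamma NF(x)$ contributions cancel exactly, leaving
\begin{equation}\label{eq:plan-main}
\log \mathbb{E}\bigl[e^{\pm\gamma h_N(x)}\bigr] \;=\; \frac{\gamma^2}{2}\log N \;+\; \frac{\gamma^2}{4}\log(1-x^2) \;+\; O(1),
\end{equation}
uniformly for $|x|\leq 1-N^{-2}\log\log N$. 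On a compact $A\subset(-1,1)$ the quantity $\log(1-x^2)$ is bounded, so \eqref{eq:plan-main} immediately delivers both the lower bound \eqref{prop5-1-eq1} and the upper bound \eqref{prop5-1-eq2} for $x\in A$; and since $\log(1-x^2)\leq 0$ throughout $(-1,1)$, \eqref{eq:plan-main} also gives \eqref{prop5-1-eq2} on the whole interior range $|x|\leq 1-N^{-2}\log\log N$.

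It remains to extend the upper bound \eqref{prop5-1-eq2} to the two edge regions $(1-N^{-2}\log\log N,\,1)$ and $(-1,\,-1+N^{-2}\log\log N)$, where Proposition \ref{Asymptotics in the edge} does not apply. For $e^{+\gamma h_N(x)}$ near the right edge, the purely deterministic bound $h_N(x)\leq \sqrt{2}\pi N(1-F(x))$, coming from $\#\{j:\lambda_j\leq x\}\leq N$ together with $1-F(x)=\tfrac{1}{\pi}\arccos x=O(\sqrt{1-x})$, gives $h_N(x)\leq C\sqrt{\log\log N}$ throughout this regime, whence $\mathbb{E}[e^{\gamma h_N(x)}]\leq (\log N)^{O(1)}\ll N^{\gamma^2/2}$. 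The dual bound $-h_N(x)\leq \sqrt{2}\pi N F(x)=O(\sqrt{\log\log N})$ near the left edge handles $\mathbb{E}[e^{-\gamma h_N(x)}]$ there in the same way.

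The remaining two cases, $e^{-\gamma h_N(x)}$ near the right edge and (symmetrically) $e^{+\gamma h_N(x)}$ near the left edge, constitute the main obstacle, since the naive deterministic bound fails ($NF(x)$ is then comparable to $N$). My plan is to exploit a monotonicity property: between consecutive eigenvalues $-h_N$ increases smoothly at rate $\sqrt{2}\pi N F'(x)\geq 0$, while at each $\lambda_j$ it jumps \emph{downward} by $\sqrt{2}\pi$. Therefore, setting $x_0:=1-N^{-2}\log\log N$, one has for every $x\in[x_0,1)$
\begin{equation*}
-h_N(x)\;\leq\; -h_N(x_0) \;+\; \sqrt{2}\pi N\bigl(F(x)-F(x_0)\bigr)\;\leq\; -h_N(x_0) \;+\; O\bigl(\sqrt{\log\log N}\bigr).
\end{equation*}
Applying \eqref{eq:plan-main} at $x_0$ (where $\log(1-x_0^2)=-2\log N+O(\log\log\log N)$ yields $\mathbb{E}[e^{-\gamma h_N(x_0)}]=(\log\log N)^{O(1)}$), the above pointwise bound gives $\mathbb{E}[e^{-\gamma h_N(x)}]\leq e^{O(\sqrt{\log\log N})}(\log\log N)^{O(1)}\ll N^{\gamma^2/2}$. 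The symmetric argument for $h_N$ near the left edge (starting from $x_1:=-1+N^{-2}\log\log N$) closes the proof. The essential difficulty is precisely this last step: Proposition \ref{Asymptotics in the edge} stops just short of $\pm 1$, and the monotonicity bridge is what fills the remaining $N^{-2}\log\log N$-sized gap to the hard edges.
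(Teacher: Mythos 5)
Your proposal is correct and follows essentially the same route as the paper's proof. Both start from Heine's identity and the asymptotics of Proposition~\ref{Asymptotics in the edge} to obtain \eqref{eq:plan-main} uniformly for $|x|\leq 1-N^{-2}\log\log N$, which delivers the lower bound on any compact $A$ and the upper bound on that interior range, and both close the remaining $N^{-2}\log\log N$-sized strips by the monotonicity of the counting function $\sum_j \mathbf{1}_{\lambda_j\leq x}$ combined with the fact that $F$ varies by only $O(N^{-1}\sqrt{\log\log N})$ across those strips. The one small organizational difference is that you make the four edge cases explicit: for $e^{+\gamma h_N}$ near $+1$ (and symmetrically $e^{-\gamma h_N}$ near $-1$) the counting-function monotonicity collapses to the purely deterministic bound $|h_N(x)|\leq\sqrt{2}\pi N\min\{F(x),\,1-F(x)\}=O(\sqrt{\log\log N})$, so no probabilistic input is needed there, while for the other two cases ($e^{-\gamma h_N}$ near $+1$ and $e^{+\gamma h_N}$ near $-1$) the monotonicity bridge must be combined with the bulk moment estimate applied at $x_0=\pm(1-N^{-2}\log\log N)$; the paper treats one of these four cases explicitly and calls the rest analogous, whereas your version makes the distinction between the deterministic and bridge-plus-bulk cases visible. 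This is a genuine clarification rather than a different method; the substance is identical.
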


\begin{proof}
It suffices to prove the estimates for $h_N(x)$, as the case of $-h_N(x)$ follows similarly

We begin by computing the exponential moment of $h_N(x)$. From its definition in \eqref{hN}, we have
\begin{eqnarray}
\mathbb{E} [e^{\gamma h_N(x)}] = \mathbb{E} \big[e^{\sqrt{2}\pi\gamma \sum_{j=1}^N 1_{\lambda_j\leq x} - \sqrt{2}\pi\gamma N F(x)}\big] 
= e^{-\sqrt{2}\pi \gamma N F(x)} \mathbb{E} \big[e^{\sqrt{2}\pi\gamma \sum_{j=1}^N 1_{\lambda_j\leq x}}\big],
\end{eqnarray}
where $F(x)$ is the distribution function defined in \eqref{distribution of eq measure}. By \eqref{Heine Identity}, we get
\begin{eqnarray}
\mathbb{E} [e^{\gamma h_N(x)}] = e^{-\sqrt{2}\pi \gamma N F(x)} \frac{1}{Z_N} D_N (x; \gamma; t) = e^{-\sqrt{2}\pi \gamma N F(x)} \frac{D_N (x; \gamma; t)}{D_N (0; 0; t)}.  \label{EV-Heine Identity1}
\end{eqnarray}
Then, from Proposition \ref{Asymptotics in the edge}, we can see that
\begin{equation}\label{EehN}
\log \mathbb{E} [e^{\gamma h_N(x)}] = \frac{\gamma^2}{2}\log N+\frac{\gamma^2}{4}\log (1-x^2) + O(1),
\end{equation}
uniformly for $|x|\leq 1- N^{-2}\log\log N$. This implies that, for any compact set $A\subset (-1, 1)$,  the lower bound \eqref{prop5-1-eq1} holds.

We now turn to the upper bound \eqref{prop5-1-eq2}. From the definition of $F(x)$ in \eqref{distribution of eq measure},  there exists a constant $C>0$ such that $(1-x^2)^{1/2} \leq CF(x)$ for all $|x|\leq 1$. Then, by \eqref{EehN}, there exists $R_{\gamma}>0$ such that for all $|x|\leq 1- N^{-2}\log\log N$, 
\begin{equation} \label{hN moment bulk}
\mathbb{E} [ e^{\gamma h_N(x)}] \leq R_{\gamma} (NF(x))^{\gamma^2 / 2}.
\end{equation}
Since $F(x) \in [0,1]$, the right-hand side is bounded by $R_{\gamma} N^{\gamma^2 / 2}$. Next, we extend this estimate to $-1 \leq x\leq -1 + N^{-2}\log\log N$. From the definition of $h_N(x)$ in \eqref{hN}, we  get
\begin{multline}
 h_N(-1+N^{-2}\log\log N) + \sqrt{2}\pi N F(-1+N^{-2}\log \log N) \nonumber\\
= \sqrt{2}\pi \sum_{1\leq j \leq N} 1_{\lambda_j \leq -1 + N^{-2}\log \log N}  \geq \sqrt{2}\pi \sum_{1\leq j \leq N} 1_{\lambda_j \leq x} 
\geq  h_N(x)
\end{multline}
for all $ x\in [-1 , -1 + N^{-2}\log\log N]$. This gives us 
\begin{equation}
\mathbb{E} [ e^{\gamma h_N(x)}] \leq e^{\sqrt{2}\pi \gamma N  F(-1+N^{-2}\log \log N) } \, \mathbb{E} [e^{\gamma h_N(-1+N^{-2}\log \log N)}].
\end{equation}
Note that $F(-1+N^{-2} \log \log N) \leq C N^{-1}(\log\log N)^{1/2}$ for some $C>0$. Applying \eqref{hN moment bulk} at $x = -1+N^{-2} \log \log N$, we obtain
\begin{eqnarray} \label{hN moment edge}
\mathbb{E} [e^{\gamma h_N(x)}] &\leq& e^{\sqrt{2}\pi\gamma C (\log\log N)^{1/2}} R_{\gamma} \Big(N F(-1+N^{-2} \log \log N) \Big)^{\gamma^2 / 2} \nonumber\\
&\leq& e^{\sqrt{2}\pi\gamma C (\log\log N)^{1/2}} R_{\gamma} \Big( C(\log\log N)^{1/2} \Big)^{\gamma^2 / 2},
\end{eqnarray}
which is of order $o(N^{\gamma^2/2})$.  The case for  $1- N^{-2}\log\log N\leq x\leq 1 $ is analogous. This completes the proof of \eqref{prop5-1-eq2} and the proposition.
\end{proof}


Now, we verify the main exponential moment estimates required by \cite[Assumption 2.5]{CFL2021}. Analogous to \eqref{EV-Heine Identity1}, we have the Hankel determinant representation
\begin{eqnarray} \label{EV-Heine Identity2}
\mathbb{E} [e^{\gamma_1 h_N(x) + \gamma_2 h_N(y)}] = e^{-\sqrt{2}\pi (\gamma_1+\gamma_2) N F(x)} \frac{D_N (x, y; \gamma_1, \gamma_2; t)}{D_N (0; 0; t)}.
\end{eqnarray}
Since $h_N(x)$ defined in \eqref{hN} is not centered, we introduce the following modified eigenvalue counting function
\begin{equation} \label{tilde hN}
\tilde{h}_N(x) = h_N(x) - \sqrt{2}\pi \left(\frac{\alpha+\beta}{2\pi} \int_{-1}^x \frac{ds}{\sqrt{1-s^2}} - \frac{\alpha}{2}1_{[1, \infty)}(x) - \frac{\beta}{2}1_{[-1, \infty)}(x) - \frac{1}{2\pi}(\mathcal{U}t(x))\sqrt{1-x^2}\right),
\end{equation}
where $\mathcal{U}t$ is the Hilbert transform defined in \eqref{def-Hilbert-transform}. Note that $\tilde{h}_N(x)$ differs from $h_N(x)$ by a deterministic quantity. 

We also need the following regularized function
\begin{eqnarray} \label{tilde hN-epsilon}
\tilde{h}_{N, \varepsilon}(x) = (\tilde{h}_N \star \varphi_\varepsilon)(x),
\end{eqnarray}
where $\star$ is the convolution operator defined by
\begin{equation}
(f\star g)(x) = \int_{-\infty}^\infty f(x')g(x-x')dx',
\end{equation}
and the mollifier is given by
\begin{equation}\label{mollifier}
\varphi_\epsilon (x) = \varepsilon^{-1} \varphi(x / \varepsilon) \quad \textrm{with} \quad  \varphi(x) = \frac{1}{\pi}\frac{1}{1+x^2}.
\end{equation}
With the above definition, the exponential moment of $\sum_{j=1}^n r_j \tilde{h}_{N, \varepsilon_j}(x)$ becomes
\begin{eqnarray}
& & \mathbb{E} \big[e^{\sum_{j=1}^n r_j \tilde{h}_{N, \varepsilon_j}(x)} \big] =  \mathbb{E} \exp\left( \sum_{j=1}^n r_j \Big(\sum_{k=1}^N w_{\varepsilon_j, x}(\lambda_k) - N \int_{-1}^1 w_{\varepsilon_j, x} d\mu_V \right. \label{eq: exp-moment-1}  \\
& & \hspace{4cm} \left. - \frac{\alpha+\beta}{2\pi} \int_{-1}^1 \frac{w_{\varepsilon_j, x}(s)ds}{\sqrt{1-s^2}} + \frac{\alpha}{2}w_{\varepsilon_j, x}(1) + \frac{\beta}{2} w_{\varepsilon_j, x}(-1) + \sigma^2(w_{\varepsilon_j,x}; t) \Big) \right) \nonumber\\
&& = \mathbb{E} \big[e^{\sum_{k=1}^N w(\lambda_k)}\big] \exp\left[-N \int_{-1}^1 w d\mu_V- \frac{\alpha+\beta}{2\pi} \int_{-1}^1 \frac{w(s)ds}{\sqrt{1-s^2}} + \frac{\alpha}{2}w(1) + \frac{\beta}{2} w(-1) + \sigma^2(w; t)\right] ,  \nonumber
\end{eqnarray}
where
\begin{equation}\label{eq: w-def}
w(z) = \sum_{j=1}^n r_j w_{\varepsilon_j, x} (z),
\end{equation}
and
\begin{equation} \label{eq: w-epsilon-def}
w_{\varepsilon_j, x} (z) = \sqrt{2}\pi (1_{(-\infty, x]} \star \varphi_{\varepsilon_j}) (z) = \frac{\sqrt{2}\pi}{2} + \frac{\sqrt{2}}{2i} \Big( \log (\varepsilon - i(x-z)) - \log (\varepsilon + i(x-z)) \Big).
\end{equation}
Note that $w_{\varepsilon_j, x} (z) $ has a branch cut along $x \pm i[\varepsilon, \infty)$ and is analytic in a neighborhood of $[-1,1]$. By \eqref{Heine Identity}, $\mathbb{E} \big[e^{\sum_{k=1}^N w(\lambda_k)}\big] =\frac{D_N(0; 0; w+t)}{D_N(0; 0; t)}$,  so \eqref{eq: exp-moment-1} becomes
\begin{multline} \label{eq: exp-moment-2}
\mathbb{E} \big[e^{\sum_{j=1}^n r_j \tilde{h}_{N, \varepsilon_j}(x)} \big] = \frac{D_N(0; 0; w+t)}{D_N(0; 0; t)} \exp\left[-N \int_{-1}^1 w d\mu_V- \frac{\alpha+\beta}{2\pi} \int_{-1}^1 \frac{w(s)ds}{\sqrt{1-s^2}} \right. \\ \left.  + \frac{\alpha}{2}w(1) + \frac{\beta}{2} w(-1)+\sigma^2(w; t)\right].
\end{multline}
Here, we also make use of \cite[(A.6)\&(A.10)]{CFL2021} to rewrite the convolution term as
\begin{eqnarray}
 \frac{\sqrt{2}}{2}\int_{-1}^1\varphi_\varepsilon(s-x) (\mathcal{U}t)(s) \sqrt{1-s^2}ds = -\frac{1}{2\pi}\int_{-1}^1 w_{\varepsilon, x}'(s) (\mathcal{U}t)(s) \sqrt{1-s^2}ds = \sigma^2(w_{\epsilon, x}; t),
\end{eqnarray}
where $\sigma^2(w; t)$ is defined in \eqref{eq:sigma-def}.

Before proving the next proposition, we would like to clarify the notational differences between this paper and \cite{CFL2021}. In \cite{CFL2021}, the authors adopt the notation of a canonical process by considering the same random field, denoted as $X(x)$, under different probability measures. Their aim is to provide a general framework for the theory of GMC using simplified notations. In contrast, our work focuses on a concrete JUE model. Therefore, we use a more traditional notation, considering different random fields under a fixed probability measure with a density given by \eqref{JUEdensity}. More precisely, all expressions of the form $\mathbb{E}_N e^{\gamma X(x)}$ in \cite{CFL2021} correspond to $\mathbb{E} e^{\gamma h_N(x)}$ in this paper, where the expectation is taken with respect to \eqref{JUEdensity}. Similarly, we write $\mathbb{E} e^{\gamma h_{N, \varepsilon}(x)}$ instead of $\mathbb{E}_N e^{\gamma X_{\varepsilon}(x)}$. Note that in this paper, $X(x)$ is reserved exclusively for the log-correlated field with correlation kernel \eqref{correlation kernel of X}.

\begin{proposition} \label{all the asymptotics of hN}
The modified eigenvalue counting function $\tilde{h}_N(x)$ defined in \eqref{tilde hN} satisfies all the requirements of the exponential moments  in \cite[Assumption 2.5]{CFL2021}.
\end{proposition}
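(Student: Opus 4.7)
The plan is to verify the required exponential-moment conditions by exploiting the identity \eqref{eq: exp-moment-2}, which expresses any joint exponential moment $\mathbb{E}\big[\exp(\sum_{j=1}^n r_j \tilde{h}_{N,\varepsilon_j}(x_j))\big]$ as a ratio $D_N(0;0;w+t)/D_N(0;0;t)$ times explicit deterministic factors, where $w=\sum_j r_j w_{\varepsilon_j,x_j}$. The definition \eqref{tilde hN} of $\tilde h_N$ has been carefully tuned so that the subtracted quantity exactly cancels, after convolution against $\varphi_{\varepsilon_j}$, all terms in the asymptotics of the Hankel ratio that are not pure covariance terms. Verifying Assumption 2.5 of \cite{CFL2021} then reduces to computing this ratio and matching it with the Gaussian exponential moments of the regularized log-correlated field $X_\varepsilon$.

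The central step is to apply Proposition \ref{Asymptotics in the Separated Regime} with $\gamma_1=\gamma_2=0$ to the weight $w$. Each $w_{\varepsilon_j,x_j}$ defined in \eqref{eq: w-epsilon-def} is analytic off the vertical branch cuts $x_j\pm i[\varepsilon_j,\infty)$ and remains uniformly bounded on the shrinking neighbourhood $\mathcal{S}_N$ of $[-1,1]$ appearing in Proposition \ref{Asymptotics in the Separated Regime}, provided $\varepsilon_j > \varepsilon_N/2 = N^{-1+\alpha}/2$. Substituting the resulting expansion into \eqref{eq: exp-moment-2}, the $N$-linear term, the Hilbert-transform correction, the boundary contributions at $\pm 1$, and the cross-term $\sigma^2(w;t)$ cancel exactly against the prefactors, yielding
\begin{equation*}
\log \mathbb{E}\big[e^{\sum_{j=1}^n r_j \tilde{h}_{N,\varepsilon_j}(x_j)}\big] = \tfrac{1}{2}\sigma^2(w) + o(1) = \tfrac{1}{2}\sum_{j,k=1}^n r_j r_k\, \sigma^2(w_{\varepsilon_j,x_j},w_{\varepsilon_k,x_k}) + o(1).
\end{equation*}
An integration by parts against $\varphi_{\varepsilon}$ together with the explicit form of the mollifier in \eqref{mollifier} identifies each $\sigma^2(w_{\varepsilon_j,x_j},w_{\varepsilon_k,x_k})$ with $\iint \Sigma(u,v)\,\varphi_{\varepsilon_j}(u-x_j)\,\varphi_{\varepsilon_k}(v-x_k)\,du\,dv$, i.e., the covariance of the regularized Gaussian field $X_{\varepsilon_j},X_{\varepsilon_k}$ with kernel $\Sigma$ given in \eqref{correlation kernel of X}. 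This produces the required convergence of regularized exponential moments to the Gaussian ones.

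To cover the items in Assumption 2.5 that are not limiting identities, I would combine the above with Proposition \ref{Assumption 3.1} (and Jensen's inequality applied to the convolution $\tilde h_N\star\varphi_\varepsilon$) to obtain the uniform upper bound $\mathbb{E}[e^{\gamma \tilde h_{N,\varepsilon}(x)}]\leq C_\gamma N^{\gamma^2/2}$, and use the edge tail control in \eqref{hN moment edge} to handle $x$ in the $N^{-2}\log\log N$-neighbourhood of $\pm 1$. The principal technical obstacle is maintaining \emph{uniformity} of the $o(1)$ error as several parameters degenerate simultaneously: when two mollification centres $x_j,x_k$ approach each other faster than the scales $\varepsilon_j,\varepsilon_k$, or when some $x_j$ approaches a hard edge. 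The first is absorbed by the shrinking-domain generalization in Proposition \ref{Asymptotics in the Separated Regime}; for the second, I would split off the singular factor using Proposition \ref{Asymptotics in the edge} and then apply the bulk result to the remaining analytic perturbation, ensuring the edge contributions $\tfrac{\gamma^2}{4}\log(1-x^2)$ cancel against the corresponding piece of the centering in \eqref{tilde hN}.
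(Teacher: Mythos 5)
There is a genuine gap. Your central computation applies Proposition~\ref{Asymptotics in the Separated Regime} with $\gamma_1=\gamma_2=0$ to the smooth weight $w=\sum_j r_j w_{\varepsilon_j,x_j}$, which handles precisely the conditions where \emph{all} fields are mollified (e.g.\ the $\varepsilon,\varepsilon'>0$ case of \cite[(2.9)]{CFL2021}). But Assumption~2.5 also contains conditions on the \emph{unregularized} field $\tilde h_N$, and these are the harder part: the exponential moment $\mathbb{E}[e^{\gamma h_N(x)}]$ is a Hankel determinant whose weight has an actual jump discontinuity at $x$, not an analytic approximation. Verifying these requires taking $\gamma_1,\gamma_2\neq 0$ and bringing in the other two Hankel asymptotics: Proposition~\ref{Asymptotics in the edge} (Bessel/confluent-hypergeometric parametrix when a jump approaches $\pm1$) and, crucially, Proposition~\ref{Asymptotics in the merging regime} (Painlev\'e~V parametrix when two jumps merge). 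Your plan never invokes the merging-regime result.

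This omission bites concretely in \cite[(2.11)]{CFL2021}, which demands a uniform bound of the form $\mathbb{E}[e^{\gamma \tilde h_N(x)+\gamma \tilde h_N(y)}]/(\mathbb{E}[e^{\gamma \tilde h_N(x)}]\mathbb{E}[e^{\gamma \tilde h_N(y)}])\lesssim |x-y|^{-\gamma^2}$ down to $|x-y|\sim\varepsilon_N$. Here the weight has two genuine jump singularities at $x$ and $y$ on $[-1,1]$; your claim that the shrinking-domain extension of Proposition~\ref{Asymptotics in the Separated Regime} ``absorbs'' this is incorrect --- that extension only relaxes analyticity of the smooth multiplicative perturbation $w_N$ near $[-1,1]$, it does not allow the Fisher--Hartwig jump points $x_1,x_2$ to coalesce. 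The merging of jump points requires the separate RH analysis culminating in Proposition~\ref{Asymptotics in the merging regime}. Similarly, your assertion that the $\tfrac{\gamma^2}{4}\log(1-x^2)$ term from Proposition~\ref{Asymptotics in the edge} ``cancels against the corresponding piece of the centering in \eqref{tilde hN}'' is misplaced: the centering in \eqref{tilde hN} removes the mean terms (from $\mu(f)$ in \eqref{eq: JUE-Global-CLTmean}), whereas the $\log(1-x^2)$ term is a variance-order contribution used in Proposition~\ref{Assumption 3.1} to compare the one-point moment to $N^{\gamma^2/2}$; there is no cancellation, only absorption into the bound via $F(x)$.
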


\begin{proof}
We first note that, since $\tilde{h}_N(x) $ differs from $h_N(x) $ only by a deterministic quantity (cf. \eqref{tilde hN}), we have
\begin{equation} \label{eq: prop-5.2-first-relation}
 \frac{\mathbb{E} [e^{\gamma \tilde{h}_{N, \varepsilon}(x) + \gamma \tilde{h}_{N, \varepsilon'}(y)}]}{\mathbb{E} [e^{\gamma \tilde{h}_{N, \varepsilon}(x)}] \, \mathbb{E} [e^{\gamma \tilde{h}_{N, \varepsilon'}(y)}]} = \frac{\mathbb{E} [e^{\gamma h_{N, \varepsilon}(x) + \gamma h_{N, \varepsilon'}(y)}]}{\mathbb{E} [e^{\gamma h_{N, \varepsilon}(x)}] \, \mathbb{E} [e^{\gamma h_{N, \varepsilon'}(y)}]} \qquad \textrm{for }  \varepsilon, \varepsilon' \geq 0.
\end{equation}
Recall from \eqref{EV-Heine Identity1}, \eqref{EV-Heine Identity2} and \eqref{eq: exp-moment-2} that all required exponential moments can be expressed in terms of Hankel determinants. Moreover, we have the following key observation: the exponential moment $\mathbb{E} e^{\gamma h_N(x)}$ is related to a Hankel determinant whose weight function possesses a jump singularity, whereas the exponential moment of the regularized function $\mathbb{E} e^{\gamma h_{N, \varepsilon}(x)}$ corresponds to a Hankel determinant with a smooth weight function on $[-1,1]$. Then, using the asymptotic results for Hankel determinants from Propositions \ref{Asymptotics in the Separated Regime}, \ref{Asymptotics in the merging regime}, and \ref{Asymptotics in the edge}, the verification of the assumptions follows a similar approach to \cite[Sec. 2.7.2]{CFL2021}. For illustration, we present detailed proofs for three representative conditions, namely (2.9), (2.11) and (2.12)  in \cite[Assumption 2.5]{CFL2021}. The remaining conditions can be verified analogously.

\underline{Verification of \cite[(2.9)]{CFL2021}.} In our notation, we need to show that, for any fixed $\varepsilon, \varepsilon' \geq 0$,
\begin{eqnarray} \label{eq: our version of (2.9)}
\lim_{N\to\infty} \frac{\mathbb{E} [e^{\gamma \tilde{h}_{N, \varepsilon}(x) + \gamma \tilde{h}_{N, \varepsilon'}(y)}]}{\mathbb{E} [e^{\gamma \tilde{h}_{N, \varepsilon}(x)}] \, \mathbb{E} [e^{\gamma \tilde{h}_{N, \varepsilon'}(y)}]}  = e^{\gamma^2 \mathbb{E} [X_{\varepsilon}(x)X_{\varepsilon'}(y)]},
\end{eqnarray}
uniformly for $(x,y)$ in any compact subset of $[-1,1]^2$. Here, $X_\varepsilon (x)$ is defined as 
\begin{equation}
X_{\varepsilon}(x) = \int_{-1}^1 \varphi_\varepsilon (x-u) X(u)du,
\end{equation}
where the mollifier $\varphi_{\varepsilon}(x)$ is given in \eqref{mollifier}. Note that $X_{0}(x)$ = $X(x)$. Depending on the values of $\varepsilon $ and $\varepsilon'$, we will prove this result in three different cases.

Case 1: $\varepsilon = \varepsilon' = 0$. From \eqref{EV-Heine Identity1}, \eqref{EV-Heine Identity2} and \eqref{eq: prop-5.2-first-relation}, we have
\begin{eqnarray}\label{eq 2.9: subcase 1: Hankel determinant}
\frac{\mathbb{E} [e^{\gamma \tilde{h}_N(x) + \gamma \tilde{h}_N(y)}]}{\mathbb{E} [e^{\gamma \tilde{h}_N(x)}] \,\mathbb{E} [e^{\gamma \tilde{h}_N(y)}]} = \frac{D_N(x, y; \gamma, \gamma; t) D_N(0; 0; t)}{D_N(x; \gamma; t) D_N(y; \gamma; t)}.
\end{eqnarray}
Using the asymptotics for Hankel determinants from \cite[Theorem 1.3]{CG2021}, we obtain
\begin{equation} \label{eq 2.9: subcase 1}
\lim_{N\to\infty} \frac{D_N(x, y; \gamma, \gamma; t) D_N(0; 0; t)}{D_N(x; \gamma; t) D_N(y; \gamma; t)} =\left(\frac{1- xy + \sqrt{1-x^2}\sqrt{1-y^2}}{|x-y|}\right)^{\gamma^2}.
\end{equation}
In view of \eqref{correlation kernel of X}, the right-hand side coincides with $e^{\gamma^2 \mathbb{E} X(x)X(y)}$.

Case 2: $\varepsilon = 0, \varepsilon' > 0$. From \eqref{EV-Heine Identity1}, \eqref{eq: exp-moment-2} and \eqref{eq: prop-5.2-first-relation}, we have
\begin{eqnarray}
\frac{\mathbb{E} [e^{\gamma \tilde{h}_N(x) + \gamma \tilde{h}_{N, \varepsilon'}(y)}]}{\mathbb{E} [e^{\gamma \tilde{h}_N(x)}]\, \mathbb{E} [e^{\gamma \tilde{h}_{N, \varepsilon'}(y)}]}
= \frac{D_N(x; \gamma; \gamma w_{\varepsilon', y}+t) D_N(0; 0; t)}{D_N(x; \gamma; t) D_N(0; 0; \gamma w_{\varepsilon', y}+t)},
\end{eqnarray}
where $w_{\varepsilon', y}(z)$ is defined in \eqref{eq: w-epsilon-def}. We apply Proposition \ref{Asymptotics in the Separated Regime} to derive the above limit as $N \to \infty$. Specifically, to approximate $\frac{D_N(x; \gamma; \gamma w_{\varepsilon', y}+t) }{D_N(x; \gamma; t) }$, we set $\gamma_1 = \gamma, \gamma_2 = 0$ and $w_N = \gamma w_{\varepsilon', y}$. For the term $\frac{D_N(0; 0; \gamma w_{\varepsilon', y}+t)}{D_N(0; 0; t)}$, we only need to change $\gamma_1 = 0$. Therefore,  from \eqref{equation: Asymptotics in seperated regime}, we obtain
\begin{eqnarray}
\lim_{N\to \infty} \frac{D_N(x; \gamma; w+t) D_N (0; 0; t)}{D_N(x; \gamma; t) D_N(0; 0; w+t)} =  e^{\frac{\gamma}{\sqrt{2}} \mathcal{U}w(x) \sqrt{1-x^2}} =e^{\gamma^2 \mathbb{E} [X(x)X_{\varepsilon'}(y)]},
\end{eqnarray}
where we have used \cite[(A.14)]{CFL2021} in the last identity.

Case 3: $\varepsilon > 0, \varepsilon' > 0$. Similarly,  from \eqref{eq: exp-moment-2} and \eqref{eq: prop-5.2-first-relation}, we have
\begin{eqnarray}
\frac{\mathbb{E} [e^{\gamma \tilde{h}_{N, \varepsilon}(x) + \gamma \tilde{h}_{N, \varepsilon'}(y)}]}{\mathbb{E} [e^{\gamma \tilde{h}_{N, \varepsilon}(x)}]\, \mathbb{E} [e^{\gamma \tilde{h}_{N, \varepsilon'}(y)}]} = \frac{D_N(0; 0; w_1 + w_2+t) D_N(0; 0; t)}{D_N(0; 0; w_1+t) D_N(0; 0; w_2+t)},
\end{eqnarray}
with
\begin{eqnarray}
w_1 = \gamma w_{\varepsilon, x}, \quad 
w_2 = \gamma w_{\varepsilon', y}.
\end{eqnarray}
Using Proposition \ref{Asymptotics in the Separated Regime} again, we set $\gamma_1=\gamma_2 = 0$ and $w_N = w_1, w_2, w_1+w_2$, respectively. Then, we obtain 
\begin{eqnarray}
&& \lim_{N\to\infty} \frac{D_N(0; 0; w_3+t) / D_N(0; 0; t)}{\left[D_N(0; 0; w_1+t) / D_N(0; 0; t)\right]\left[D_N(0; 0; w_2+t) / D_N(0; 0; t)\right]} \nonumber\\
&&\qquad = e^{\frac{1}{2} \sigma^2(w_1 + w_2) - \sigma^2(w_1) - \sigma^2(w_2)}  =  e^{\gamma^2 \mathbb{E} [X_{\varepsilon}(x)X_{\varepsilon'}(y)]},
\end{eqnarray}
where we have used \cite[(A.9)]{CFL2021} in the last identity. This completes the verification of \eqref{eq: our version of (2.9)}.

\underline{Verification of \cite[(2.11)]{CFL2021}.} In our notation, we need to show that, for $\varepsilon_N = N^{-1+\alpha}$ with $0<\alpha<2/3$,
\begin{equation} \label{eq 2.11 in our version}
\frac{\mathbb{E} [e^{\gamma \tilde{h}_N(x) + \gamma \tilde{h}_N(y)}]}{\mathbb{E} [e^{\gamma \tilde{h}_N(x)}] \,\mathbb{E} [e^{\gamma \tilde{h}_N(y)}]} << |x-y|^{-\gamma^2},
\end{equation}
uniformly for all $(x, y)$ in a fixed compact subset of $\{(x, y)\in (-1, 1)^2: |x-y| \geq \varepsilon_N\}$.

Note that the right-hand side of the above equation coincides with that of \eqref{eq 2.9: subcase 1: Hankel determinant}. Then, when $|x-y| \geq \delta$ for some small fixed $\delta>0$, the above bound follows directly from \eqref{eq 2.9: subcase 1}.

Now consider the case $\varepsilon_N \leq |x-y| \leq \delta$. We first rewrite \eqref{eq 2.9: subcase 1: Hankel determinant} as 
\begin{eqnarray}
\frac{\mathbb{E} [e^{\gamma \tilde{h}_N(x) + \gamma \tilde{h}_N(y)}]}{\mathbb{E} [e^{\gamma \tilde{h}_N(x)}] \, \mathbb{E} [e^{\gamma \tilde{h}_N(y)} ] } = \frac{D_N(x, y; \gamma, \gamma; t)}{D_N(x; 2\gamma; t)} \frac{D_N(x; 2\gamma; t)}{D_N(x; \gamma; t)} \frac{D_N(x; 0; t)}{D_N(y; \gamma; t)}.
\end{eqnarray}
Applying Proposition \ref{Asymptotics in the merging regime} with $\gamma_1 = \gamma_2 = \gamma$, $x_1 = x$ and $x_2 = y$, we have
\begin{eqnarray}
\frac{D_N(x, y; \gamma, \gamma; t)}{D_N(x; 2\gamma; 0)} &=& \exp\left( \sqrt{2}N \gamma \int_{x}^{y} \frac{du}{\sqrt{1-u^2}} - \gamma^2 \log |x-y| N + O(1)\right),
\end{eqnarray}
as $N\to\infty$, uniformly for $\varepsilon_N \leq|x-y| \leq \delta$. Using the asymptotics of the Hankel determinants in \cite[Theorem 1.3]{CG2021}, we also have
\begin{eqnarray}
& & \frac{D_N(x; 2\gamma; t)}{D_N(x; \gamma; t)} \frac{D_N(x; 0; t)}{D_N(y; \gamma; t)} =  \exp\left( -\sqrt{2}\gamma N \int_{x}^{y} \frac{du}{\sqrt{1-u^2}} + \gamma^2\log N + O(1)\right),
\end{eqnarray}
as $N\to\infty$. Combining the above two equations yields \eqref{eq 2.11 in our version}, 
uniformly for $(x, y) \in (-1, 1)^2$ with $\varepsilon_N \leq |x-y| \leq \delta$. This completes the verification of \eqref{eq 2.11 in our version}.

\underline{Verification of \cite[(2.12)]{CFL2021}.} In our notation, we need to show that
\begin{eqnarray}
& &\frac{\mathbb{E} \left[ e^{\gamma \tilde{h}_{N, \varepsilon}(x) + \gamma \tilde{h}_{N, \varepsilon'}(y)} e^{\sum_{j=1}^n r_j \tilde{h}_{N, \eta_j}(x_j)} \right] }{\mathbb{E} [e^{\gamma \tilde{h}_{N, \varepsilon}(x) + \gamma \tilde{h}_{N, \varepsilon'}(y)} ]} \label{eq 2.12 in our version}\\
&=& (1+o(1)) \exp \left(\frac{1}{2}\sum_{i, j=1}^n r_i r_j \mathbb{E} [X_{\eta_i}(x_i) X_{\eta_j}(x_j)] + \sum_{i=1}^n \gamma r_i \bigg(\mathbb{E} [X(x) X_{\eta_i}(x_i)] + \mathbb{E} [X(y) X_{\eta_i}(x_i)]\bigg)\right), \nonumber
\end{eqnarray}
as $N\to\infty$, uniformly for all $\eta_j \in (\varepsilon_N, 1]$, $x_j$ in any compact subset of $(-1, 1)$ for $j=1, \cdots, n$, and $(x, y)$ in a compact subset of $(-1, 1)^2$ with $|x-y| \geq \rho$.

We first consider the case $\varepsilon = \varepsilon' = 0$. Recalling the definition of $\tilde{h}_N$ in \eqref{tilde hN}, we have
\begin{eqnarray}
& & \frac{\mathbb{E} \big[ e^{\gamma \tilde{h}_N(x) + \gamma \tilde{h}_N(y)} e^{\sum_{j=1}^n r_j \tilde{h}_{N, \eta_j}(x_j)}\big]}{\mathbb{E}[e^{\gamma \tilde{h}_N(x) + \gamma \tilde{h}_N(y)}]} \nonumber\\
&=& \exp\left(-N \int_{-1}^1 w d\mu_V-\frac{\alpha+\beta}{2\pi} \int_{-1}^1 \frac{w(s)ds}{\sqrt{1-s^2}} + \frac{\alpha}{2}w(1) + \frac{\beta}{2}w(-1) - \sigma^2(w; t)\right) \nonumber\\
& & \cdot \frac{\mathbb{E}\big[e^{\gamma h_N(x) + \gamma h_N(y)} e^{\sum_{j=1}^n r_j h_{N, \eta_j}(x_j)}\big]}{\mathbb{E} [e^{\gamma h_N(x) + \gamma h_N(y)}]}, \label{eq 2.12 in our version: first simplification}
\end{eqnarray}
where $w$ is given in \eqref{eq: w-def}. By \eqref{EV-Heine Identity2}, we get
\begin{equation}\label{eq 2.12: Henkel determinants}
\frac{\mathbb{E}\big[e^{\gamma h_N(x) + \gamma h_N(y)} e^{\sum_{j=1}^n r_j h_{N, \eta_j}(x_j)}\big]}{\mathbb{E}[e^{\gamma h_N(x) + \gamma h_N(y)}]} = \frac{D_N(x, y; \gamma, \gamma; w+t)}{D_N(x, y; \gamma, \gamma; t)}.
\end{equation}
Applying Proposition \ref{Asymptotics in the Separated Regime} with $\gamma_1 = \gamma_2 = \gamma$, $x_1 = x$ and $x_2 = y$, we have from the above two formulas
\begin{eqnarray}
& & \frac{\mathbb{E}\big[e^{\gamma \tilde{h}_N(x) + \gamma \tilde{h}_N(y)} e^{\sum_{j=1}^n r_j \tilde{h}_{N, \eta_j}(x_j)}\big]}{\mathbb{E}[e^{\gamma \tilde{h}_N(x) + \gamma \tilde{h}_N(y)}]} \nonumber\\
&=& (1+o(1)) \exp\left(\frac{1}{2}\sigma^2(w) + \frac{\gamma}{\sqrt{2}}\left(\sqrt{1-x^2}(\mathcal{U}w)(x) + \sqrt{1-y^2}(\mathcal{U}w)(y)\right)\right), \label{eq 2.12 in our version: step 2}
\end{eqnarray}
as $N\to\infty$, uniformly for all $\eta_j \in (\varepsilon_N, 1]$, $x_j$ in any compact subset of $(-1, 1)$ for $j=1, \cdots, n$, and $(x, y)$ in a compact subset of $(-1, 1)^2$ with $|x-y| \geq \rho$.

Recalling \cite[(A.9) \& (A.14)]{CFL2021}
\begin{equation}
\mathbb{E} [X_\eta(x) X_{\varepsilon}(y)] = \sigma^2(w_{\eta, x}; w_{\varepsilon, y}), \qquad \mathbb{E} [X_\eta(x) X(y)] = \frac{1}{\sqrt{2}} (\mathcal{U}w_{\eta, x})(y) \sqrt{1-y^2},
\end{equation}
then we have
\begin{equation}
\sigma^2(w) = \sum_{i, j = 1}^n r_i r_j \mathbb{E}[X_{\eta_i}(x_i) X_{\eta_j}(x_j)]
\end{equation}
and
\begin{equation}
\frac{\gamma}{\sqrt{2}}\sqrt{1-x^2}\mathcal{U}w(x) = \sum_{i=1}^n \gamma r_i \mathbb{E} [X(x)X_{\eta_i}(x_i)].
\end{equation}
Substituting the above two identities into \eqref{eq 2.12 in our version: step 2} gives us \eqref{eq 2.12 in our version} when $\varepsilon = \varepsilon' = 0$.

When either $\varepsilon > 0$ or $\varepsilon' > 0$, we incorporate the factors $e^{\gamma \tilde{h}_{N, \varepsilon}(x)}$ or $e^{\gamma \tilde{h}_{N, \varepsilon'}(y)}$ into the term $e^{\sum_{j=1}^n r_j \tilde{h}_{N, \eta_j}(x_j)}$ in \eqref{eq 2.12 in our version}, and replace $w$ by $w + \gamma w_{\varepsilon, x}$ or $w + \gamma w_{\varepsilon', y}$, respectively. Consequently, the Hankel determinant expression in \eqref{eq 2.12: Henkel determinants} becomes 
\begin{equation*}
\frac{D_N(x, y; 0, \gamma; w + \gamma w_{\varepsilon, x}+t)}{D_N(x, y; 0, \gamma; t)} \quad \textrm{or} \quad \frac{D_N(x, y; \gamma, 0; w + \gamma w_{\varepsilon', y}+t)}{D_N(x, y; \gamma, 0; t)} 
\end{equation*}
Using a similar approximation as above, we obtain \eqref{eq 2.12 in our version} from the asymptotics of these Hankel determinants. The case where both $\varepsilon$ and $\varepsilon'$ are positive is estimated in the same way. This completes the verification of \eqref{eq 2.12 in our version}.
\end{proof}

\subsection{Proof of Theorem \ref{main theorem for hN}: The Maximum of $h_N(x)$}

Now we are ready to prove Theorem \ref{main theorem for hN}. First we show that $d\mu_N^\gamma(x)$ defined in \eqref{eq:dmu-N} to the GMC measure.

\begin{proposition} \label{prop-GMC-convergence}
Let $0<\gamma<\sqrt{2}$, the eigenvalue counting function $h_N(x)$ be defined in \eqref{hN}, and $X(x)$ be the log-correlated Gaussian field with correlation kernel \eqref{correlation kernel of X}. Then, the measure $d\mu_N^\gamma(x)$  in \eqref{eq:dmu-N}  converges to the GMC measure $d\mu^\gamma(x) = \frac{e^{\gamma X(x)}}{\mathbb{E}e^{\gamma X(x)}}$ as $N \to \infty$ in law, with respect to the weak topology. 
\end{proposition}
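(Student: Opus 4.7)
The plan is to invoke \cite[Theorem 2.6]{CFL2021} directly, using the exponential moment estimates already verified in Proposition \ref{all the asymptotics of hN}. That framework provides a sufficient criterion: whenever a sequence of random fields on $[-1,1]$ satisfies the list of exponential moment bounds in \cite[Assumption 2.5]{CFL2021} with correlation structure matching a given log-correlated Gaussian field $X$, the associated normalized exponential measures converge in law, in the weak topology, to the GMC measure constructed from $X$.

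The first key observation is that the measure $d\mu_N^\gamma(x)$ is invariant under replacing $h_N$ by $\tilde{h}_N$. Indeed, by \eqref{tilde hN} the difference $h_N(x) - \tilde{h}_N(x)$ is a deterministic function of $x$, so the factor $e^{\gamma(h_N(x)-\tilde{h}_N(x))}$ cancels exactly between the numerator $e^{\gamma h_N(x)}$ and the denominator $\mathbb{E} e^{\gamma h_N(x)}$. Hence
\begin{equation*}
d\mu_N^\gamma(x) \;=\; \frac{e^{\gamma h_N(x)}}{\mathbb{E} e^{\gamma h_N(x)}}\,dx \;=\; \frac{e^{\gamma \tilde{h}_N(x)}}{\mathbb{E} e^{\gamma \tilde{h}_N(x)}}\,dx,
\end{equation*}
and it suffices to prove the convergence statement for the centered field $\tilde{h}_N$.

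Next, by Proposition \ref{all the asymptotics of hN}, the family $\tilde{h}_N$ fulfills all the requirements in \cite[Assumption 2.5]{CFL2021}, with limiting covariance kernel $\Sigma(x,y)$ from \eqref{correlation kernel of X}, which is precisely the kernel of the log-correlated Gaussian field $X(x)$. Applying \cite[Theorem 2.6]{CFL2021} in the admissible regime $0<\gamma<\sqrt{2}$ (which is the $L^2$-phase of GMC, corresponding to $\gamma^2 < 2$) then yields the weak convergence in law of $\frac{e^{\gamma \tilde{h}_N(x)}}{\mathbb{E} e^{\gamma \tilde{h}_N(x)}}\,dx$ to the GMC measure $d\mu^\gamma(x) = \frac{e^{\gamma X(x)}}{\mathbb{E} e^{\gamma X(x)}}\,dx$ as $N\to\infty$, and combining with the identification above gives the proposition.

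The one delicate point to keep in mind is that the conclusion must hold as weak convergence of measures on the closed interval $[-1,1]$, including the hard edges, whereas the verification of \cite[Assumption 2.5]{CFL2021} is naturally carried out for $x$ ranging over compact subsets of $(-1,1)$. This is handled by the uniform upper bound $\mathbb{E}[e^{\gamma h_N(x)}] \le R_\gamma N^{\gamma^2/2}$ established in Proposition \ref{Assumption 3.1}, together with the edge estimate \eqref{hN moment edge} which shows that $\mathbb{E}[e^{\gamma h_N(x)}]$ is in fact much smaller as $x$ approaches $\pm 1$; these bounds prevent mass from escaping to the endpoints and supply the required tightness, so that no modification of the proof in \cite{CFL2021} is needed for the hard-edge setting.
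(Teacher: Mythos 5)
Your proof is correct and takes essentially the same route as the paper: rewrite $d\mu_N^\gamma$ in terms of the centered field $\tilde h_N$ via the deterministic shift in \eqref{tilde hN}, appeal to Proposition~\ref{all the asymptotics of hN} for Assumption~2.5 of \cite{CFL2021}, and invoke \cite[Theorem~2.6]{CFL2021}. The concluding paragraph on tightness near the hard edges (via Proposition~\ref{Assumption 3.1} and \eqref{hN moment edge}) is a sensible sanity check not spelled out in the paper's proof, but it does not change the argument.
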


\begin{proof}
Since $h_N(x)$ and $\tilde{h}_N(x)$ only differ by a deterministic quantity (cf. \eqref{tilde hN}), we have
\begin{equation} \label{eq: GMC and two h}
d\mu_N^\gamma(x) = \frac{e^{\gamma h_N(x)}}{\mathbb{E} [e^{\gamma h_N(x)}]}dx = \frac{e^{\gamma \tilde{h}_N(x)}}{\mathbb{E} [e^{\gamma \tilde{h}_N(x)}]}dx.
\end{equation}
By Proposition \ref{all the asymptotics of hN}, $\tilde{h}_N(x)$ satisfies all the conditions in \cite[Assumption 2.5]{CFL2021}.  Hence by \cite[Theorem 2.6]{CFL2021}, the convergence of $d\mu_N^\gamma(x)$  to the GMC $d\mu^\gamma(x)$ measure is justified, which completes the proof of the proposition.
\end{proof}

The convergence of $d\mu_N^\gamma(x)$ to the GMC measure gives us the lower bound for the maximum of $h_N(x)$. 

\begin{proposition} \label{lower bound of hN}
Let $h_N(x)$ be the eigenvalue counting function  defined in \eqref{hN}. Then, for any compact set $A\subset (-1, 1)$ with a positive Lebesgue measure, we have, for any $\delta>0$,
\begin{equation} \label{eq:hn-lowerbound}
\lim_{N\to\infty} \mathbb{P} \left[ \max_{ x \in A} h_N(x) \geq (\sqrt{2}-\delta) \log N\right] = 1.
\end{equation}
\end{proposition}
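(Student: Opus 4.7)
The plan is to combine the GMC convergence from Proposition~\ref{prop-GMC-convergence} with a concentration estimate showing that the mass of $d\mu_N^\gamma$ lives asymptotically on ``thick points'' where $h_N(x)\approx\gamma\log N$. Since the naive bound $e^{\gamma h_N(x)}\le e^{\gamma\max_A h_N}$ combined with the exponential moment estimate in Proposition~\ref{Assumption 3.1} only yields $\max_A h_N\gtrsim\tfrac{\gamma}{2}\log N$ after optimizing in $\gamma<\sqrt{2}$, a factor of two short of the desired bound, a sharper control of where the GMC mass sits is needed. I would fix $\gamma\in(0,\sqrt{2})$ close to $\sqrt{2}$, specifically $\gamma=\sqrt{2}-\delta/3$, and set $\delta'=\delta/3$ in what follows.

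The key step is to show that
\begin{equation*}
\mathbb{E}\,\mu_N^\gamma\bigl(\{x\in A:h_N(x)<M\}\bigr)\xrightarrow[N\to\infty]{}0,\qquad M=(\gamma-\delta')\log N.
\end{equation*}
Writing out $d\mu_N^\gamma$ and using the crude inequality $e^{\gamma h_N(x)}\mathbf{1}_{h_N(x)<M}\le e^{(\gamma-\gamma')M}\,e^{\gamma' h_N(x)}$ for any $\gamma'\in(0,\gamma)$, together with the two-sided bound $c\,N^{\gamma^2/2}\le\mathbb{E}\,e^{\gamma h_N(x)}\le C\,N^{\gamma^2/2}$ and its analogue at $\gamma'$ from Proposition~\ref{Assumption 3.1}, both uniform in $x\in A$, yields
\begin{equation*}
\mathbb{E}\,\mu_N^\gamma\bigl(\{x\in A:h_N(x)<M\}\bigr)\;\le\;C'|A|\,N^{(\gamma-\gamma')(\gamma-2\delta'-\gamma')/2}.
\end{equation*}
Optimizing at $\gamma'=\gamma-\delta'$ produces the negative exponent $-(\delta')^2/2$, and Markov's inequality then gives $\mu_N^\gamma(\{h_N<M\}\cap A)\to 0$ in probability.

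On the other hand, Proposition~\ref{prop-GMC-convergence} together with the almost sure positivity $\mu^\gamma(A)>0$ for $\gamma<\sqrt{2}$ (a standard feature of sub-critical GMC on a set of positive Lebesgue measure, cf.~\cite[Thm.~2.6]{CFL2021}) implies that $\mu_N^\gamma(A)\ge c$ with probability tending to $1$, for some $c>0$. Subtracting, $\mu_N^\gamma\bigl(\{x\in A:h_N(x)\ge M\}\bigr)\ge c/2$ on an event of probability tending to one; in particular $\{x\in A:h_N(x)\ge M\}\neq\emptyset$ with high probability, forcing $\max_{x\in A}h_N(x)\ge M=(\gamma-\delta')\log N\ge(\sqrt{2}-\delta)\log N$ with probability tending to one.

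The main technical obstacle is the uniform-in-$x$ nature of the exponential moment bounds on $A$ needed to pass from a pointwise Gaussian-tail type estimate to the integral statement above; this is exactly what the quantitative form of Proposition~\ref{Assumption 3.1} delivers on any compact $A\subset(-1,1)$. A more cosmetic issue is that the convergence $\mu_N^\gamma(A)\to\mu^\gamma(A)$ is only in law with respect to the weak topology, but this can be upgraded to the requisite probabilistic lower bound either by Skorokhod's representation or by a Portmanteau argument, using that $\mu^\gamma$ is almost surely a non-atomic Radon measure so that any compact $A$ with non-empty interior is a continuity set.
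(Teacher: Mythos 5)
The paper proves this proposition in a single sentence: it verifies the hypotheses of \cite[Theorem 3.4]{CFL2021} via Propositions \ref{Assumption 3.1} and \ref{prop-GMC-convergence}, then quotes that theorem as a black box. Your proof unpacks what is essentially the argument behind that black box --- the same mathematics, at a more explicit level --- and the core calculation is correct. The tilting bound $e^{\gamma h_N(x)}\mathbf{1}_{\{h_N(x)<M\}}\le e^{(\gamma-\gamma')M}e^{\gamma'h_N(x)}$, combined with the two-sided moment estimates from Proposition \ref{Assumption 3.1}, yields an exponent $(\gamma-\gamma')(\gamma-\gamma'-2\delta')/2$, which at $\gamma'=\gamma-\delta'$ equals $-(\delta')^2/2$, so $\mathbb{E}\,\mu_N^\gamma\bigl(\{x\in A:h_N(x)<M\}\bigr)\to0$ and Markov kills the light-point mass.

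The one spot that needs more care --- which you rightly flag yourself --- is extracting $\mathbb{P}(\mu_N^\gamma(A)\ge c)\to 1$ from convergence in law in the weak topology of measures. That step requires $A$ to be a $\mu^\gamma$-continuity set, i.e.\ $\mu^\gamma(\partial A)=0$ a.s. Your proposed fix (non-atomicity of $\mu^\gamma$ plus non-empty interior of $A$) handles finite unions of intervals, where $\partial A$ is finite. It does \emph{not} cover an arbitrary compact $A$ of positive Lebesgue measure, as the proposition's hypothesis allows: for a fat Cantor set one has $\partial A=A$, hence $\mu^\gamma(\partial A)=\mu^\gamma(A)>0$ a.s., so $A$ is not a continuity set, and there is no nonzero continuous $f$ with $0\le f\le\mathbf{1}_A$ to test against, so neither Portmanteau nor Skorokhod delivers the lower bound directly. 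As written, your step 3 therefore proves a slightly weaker statement than the proposition asserts. This is harmless for the paper's downstream uses (Theorems \ref{main theorem for hN} and \ref{Main Theorem} only need $A$ an interval), and the cited \cite[Theorem 3.4]{CFL2021} presumably closes the gap for the general case, but you should either restrict $A$ or add an interior approximation argument if you want the full generality of the stated proposition.
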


\begin{proof}
By Propositions \ref{Assumption 3.1} and \ref{prop-GMC-convergence}, the function $h_N(x)$ satisfies all the conditions of \cite[Theorem 3.4]{CFL2021}. Then, the lower bound \eqref{eq:hn-lowerbound} follows directly.
\end{proof}

\begin{remark}
The above proposition provides a lower bound for the maximum of $h_N(x)$. This result is much stronger than Theorem \ref{main theorem for hN} because it implies that the lower bound of $\max |h_N(x)|$ holds in probability on any compact subset of $[-1, 1]$ with positive Lebesgue measure, not just on the entire interval.
\end{remark}

The upper bound for the maximum of $h_N(x)$ can be obtained from its definition, without using the convergence to the GMC measure. 
\begin{proposition} \label{prop: upper bound of hN}
Let $h_N(x)$ be the eigenvalue counting function  defined in \eqref{hN}. Then, we have, for any $\delta>0$, 
\begin{equation} \label{eq:hn-upperbound}
\lim_{N\to\infty} \mathbb{P} \left[ \max_{ x \in [-1, 1]} h_N(x) \leq (\sqrt{2}+\delta) \log N\right] = 1.
\end{equation}
\end{proposition}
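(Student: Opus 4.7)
The plan is to combine the uniform exponential moment bound of Proposition \ref{Assumption 3.1} with Markov's inequality on a carefully chosen grid, and then control the fluctuation of $h_N(x)$ between grid points using the determinantal structure of the JUE. The key structural input is that $h_N$ is right-continuous in $x$, jumps upward by exactly $\sqrt{2}\pi$ at each eigenvalue, and decreases smoothly (with derivative $-\sqrt{2}\pi N F'(x)$) elsewhere. Consequently, for any subinterval $[a,b] \subset [-1,1]$,
$$\max_{x\in[a,b]} h_N(x) \;\le\; h_N(a) + \sqrt{2}\pi\cdot\#\{j : a < \lambda_j \le b\},$$
so the maximum over $[-1,1]$ is reduced to bounding $h_N$ at finitely many points plus a small fluctuation.

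First, I would fix $\alpha = \alpha(\delta) > 0$ small and introduce a grid $\{\tilde x_k\}_{k=0}^M$ defined by the $F$-uniform rule $F(\tilde x_k) = k/M$ with $M = \lceil N^{1+\alpha}\rceil$, so that $\tilde x_0 = -1$, $\tilde x_M = 1$, and every subinterval $(\tilde x_k, \tilde x_{k+1}]$ has expected eigenvalue count $\mu_k = N/M = N^{-\alpha}$ uniformly in $k$. Applying Markov's inequality with $\mathbb{E}[e^{\gamma h_N(\tilde x_k)}] \le R_\gamma N^{\gamma^2/2}$, the choice $\gamma = \sqrt{2} + \delta/2$ and threshold $t = (\sqrt{2}+\delta/2)\log N$ gives
$$\mathbb{P}\bigl(h_N(\tilde x_k)\ge t\bigr)\;\le\;R_\gamma\,N^{\gamma^2/2-\gamma t/\log N}\;=\;R_\gamma\,N^{-1-c_1(\delta)},\qquad c_1(\delta)=\tfrac{\sqrt{2}\,\delta}{2}+\tfrac{\delta^2}{8}>0.$$
A union bound over the $M+1$ grid points (the endpoints being trivial since $h_N(\pm 1)=0$) yields $\mathbb{P}(\max_k h_N(\tilde x_k) \ge t) \le C N^{\alpha - c_1(\delta)} \to 0$ provided $\alpha < c_1(\delta)$.

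Next, I would control the fluctuation on each $[\tilde x_k,\tilde x_{k+1}]$ via the local count $N_k := \#\{j:\lambda_j\in(\tilde x_k,\tilde x_{k+1}]\}$. Since the JUE is a determinantal point process with a positive Hermitian (Christoffel--Darboux) kernel, Hadamard's inequality applied to the $K$-th factorial moment gives
$$\mathbb{P}(N_k\ge K)\;\le\;\frac{\mathbb{E}[N_k(N_k-1)\cdots(N_k-K+1)]}{K!}\;\le\;\frac{\mu_k^{K}}{K!}\;=\;\frac{N^{-\alpha K}}{K!}.$$
Summing over $k$ gives $\mathbb{P}(\max_k N_k \ge K) \le C\,N^{1+\alpha-\alpha K}$, which tends to $0$ as soon as $K > 1 + 1/\alpha$. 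Fixing such a constant $K = K(\delta)$, the fluctuation contribution $\sqrt{2}\pi \max_k N_k = O(1)$ with high probability, which is certainly bounded by $(\delta/2)\log N$ for large $N$.

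Combining the two steps through $\max_{x\in[-1,1]} h_N(x) \le \max_k h_N(\tilde x_k) + \sqrt{2}\pi\max_k N_k$ yields the claimed bound $(\sqrt{2}+\delta)\log N$ with probability tending to $1$. The main technical subtlety, and the reason this proof is not a one-line union bound, is the choice of grid: a uniform grid on $[-1,1]$ fails because $F'(x)$ blows up at $\pm 1$, so intervals touching the hard edges would have $\mu_k \asymp N^{1-\alpha}/\sqrt{1-x^2}$, unbounded in the edge region, breaking the factorial-moment control. The $F$-uniform placement neatly equidistributes the intensity, simultaneously tolerating the Markov step (which forces $M \lesssim N^{1+c_1(\delta)}$) and the fluctuation step (which demands $\mu_k$ small enough that a bounded $K$ suffices) in a single argument.
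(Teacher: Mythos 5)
Your argument is correct, but it takes a genuinely different and substantially heavier route than the paper. The paper uses the \emph{same} $F$-uniform grid idea, but takes exactly $M = N$ grid points, namely the percentiles $\kappa_j$ defined by $F(\kappa_j) = (j-1/2)/N$, and then exploits a one-sided deterministic bound: for $x \in (\kappa_j,\kappa_{j+1})$ one has $h_N(x) \le h_N(\kappa_{j+1}) + \sqrt{2}\pi$, because going backward from the \emph{right} endpoint $\kappa_{j+1}$ to $x$ removes jumps (which can only decrease $h_N$) while the drift term $NF$ decreases by at most $1$, since each cell has $F$-measure exactly $1/N$. Thus the within-cell fluctuation is a deterministic $O(1)$, no probabilistic control of local eigenvalue counts is ever needed, and a union bound over $N$ points with Markov at the critical exponent $\gamma = \sqrt{2}$ closes the argument. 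Your version instead bounds $h_N(x)$ from the \emph{left} endpoint, which picks up the local eigenvalue count $N_k$ with a positive sign; to control that you are forced to refine the grid to $M \asymp N^{1+\alpha}$ cells so that $\mu_k = N^{-\alpha}$ is small, and you must then invoke the determinantal structure of the JUE, Hadamard's inequality for the Christoffel--Darboux kernel, and factorial-moment bounds to show $\max_k N_k = O(1)$ with high probability. That machinery is all correct (the JUE with modified weight is indeed determinantal with a Hermitian positive-definite kernel, and $\mathbb{P}(N_k \ge K) \le \mu_k^K/K!$ follows), but it is entirely avoidable: with $M = N$ and the right-endpoint bound, the determinantal structure plays no role. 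In short, you correctly identified the $F$-uniform gridding that neutralizes the hard-edge blowup of $F'$, but missed that taking the grid to have cells of $F$-measure exactly $1/N$ turns the local-count control into a triviality. Your proof is valid; the paper's is shorter and needs strictly fewer structural inputs about the ensemble.
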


\begin{proof}
Recall the definition of the $j$-th percentile $\kappa_j$ defined in \eqref{eq: kappa-j-def}, and set $\kappa_0 = 0$ and $\kappa_{N+1} = 1$. Then, for any $x\in (\kappa_j, \kappa_{j+1})$, $j = 0, \cdots N$, we have
\begin{equation} \label{hN bound}
h_N(\kappa_j) - \sqrt{2}\pi \leq h_N(x) \leq h_N(\kappa_{j+1}) + \sqrt{2}\pi.
\end{equation}
Consider the probability
\begin{eqnarray*}
& & \mathbb{P} \Big[\max_{(-1, 1)} h_N(x) \geq (\sqrt{2}+\delta)\log N \Big] \nonumber\\
&& = \sum_{j} \mathbb{P}(x^*\in (\kappa_j, \kappa_{j+1})) 
\mathbb{P} \Big(h_N(x^*)\geq (\sqrt{2}+\delta) \log N \Big| x^*\in (\kappa_j, \kappa_{j+1}) \Big),
\end{eqnarray*}
where $x^*$ is a random variable such that $h_N(x^*) \geq (\sqrt{2}+\delta) \log N$. As $\mathbb{P}(x^*\in (\kappa_j, \kappa_{j+1})) \leq 1$ for all $j$, we get
\begin{equation*}
\mathbb{P} \Big[  \max_{x\in(-1, 1)} h_N(x) \geq (\sqrt{2}+\delta)\log N \Big] \leq \sum_{j} \mathbb{P} \Big(h_N(x^*)\geq (\sqrt{2}+\delta) \log N \Big| x^*\in (\kappa_j, \kappa_{j+1}) \Big) .
\end{equation*}
It then follows from \eqref{hN bound} that
\begin{equation*}
\sum_{j} \mathbb{P} \Big(h_N(x^*)\geq (\sqrt{2}+\delta) \log N \Big| x^*\in (\kappa_j, \kappa_{j+1}) \Big) \leq \sum_{j} \mathbb{P} (h_N(\kappa_j) \geq (\sqrt{2}+\delta)\log N - \sqrt{2}\pi).
\end{equation*}
By Markov's inequality, there exist positive constant $C$ and $a$, such that $\mathbb{P} (h_N(\kappa_j) \geq (\sqrt{2}+\delta)\log N - \sqrt{2}\pi) \leq C \frac{\mathbb{E}[e^{a h_N(\kappa_j)}]}{e^{a (\sqrt{2}+\delta)\log N}}$. Applying Proposition \ref{Assumption 3.1} and combining the above three formulas, we have 
\begin{equation}
\mathbb{P} \Big[ \max_{x\in (-1, 1)} h_N(x) \geq (\sqrt{2}+\delta)\log N \Big] \leq C N^{\frac{a^2}{2} + 1 - a(\sqrt{2} +\delta)}.
\end{equation}
Choosing $a = \sqrt{2}$, we find
\begin{eqnarray}
\lim_{N\to\infty} \mathbb{P} \left[\max_{(-1, 1)} h_N(x)\geq (\sqrt{2} + \delta)\log N \right] = 0,
\end{eqnarray}
which yields \eqref{eq:hn-upperbound}. This completes the proof of the proposition.
\end{proof}

Combining the above two propositions, we conclude the proof of Theorem \ref{main theorem for hN}.

\subsection{Proof of Theorem \ref{Main Theorem}: The Eigenvalue Rigidity}

We now proceed to the proof of our main result, Theorem \ref{Main Theorem}. Recall the definition of $h_N(x)$ from \eqref{hN}, we have
\begin{equation} \label{hN(lambda)}
h_N(\lambda_j)  = \sqrt{2} \pi N (F(\kappa_j) - F(\lambda_j)) + \frac{\sqrt{2}}{2}\pi.
\end{equation}
When $ j \asymp N$, the above formula yields
\begin{equation} \label{eq: lower-bound-approx}
h_N(\lambda_j)  = \sqrt{2}N F'(\kappa_j)(\kappa_j - \lambda_j) + O(1).
\end{equation}
Then, the lower bound in \eqref{eq:main-theorem} follows directly from Proposition \ref{lower bound of hN}. The rest of this section is therefore dedicated to proving the upper bound in \eqref{eq:main-theorem}. In what follows, we use $C, C'$ to denote positive constants independent of $N$, whose values not necessarily equal.

\subsubsection{Bulk Rigidity: The Method of Iteration}

We begin by establishing the following upper bound in the bulk, extending its validity near the endpoints $\pm 1$.

\begin{proposition} \label{Main theorem: Eigenvalue Rigidity on the Bulk}
For any $\delta > 0$ and $F(x)$ defined in \eqref{distribution of eq measure}, we have
\begin{equation} \label{eq: prop5.7-formula}
\lim_{N\to\infty}\mathbb{P}\left(F'(\kappa_j)|\lambda_j - \kappa_j| \leq \frac{(1+\varepsilon)\log N}{\pi N} \text{ for } j = N^\delta, \cdots, N-N^{\delta}\right) = 1.
\end{equation}
\end{proposition}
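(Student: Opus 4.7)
The plan is to derive Proposition~\ref{Main theorem: Eigenvalue Rigidity on the Bulk} as a direct consequence of the extremal bound on $h_N(x)$ established in Theorem~\ref{main theorem for hN} (specifically, Proposition~\ref{prop: upper bound of hN} applied to both $h_N$ and $-h_N$), combined with an inversion based on the explicit form $F^{-1}(\eta) = -\cos(\pi\eta)$. Concretely, I would fix $\delta' > 0$ small (to be chosen as a function of $\varepsilon$ later) and work on the high-probability event
\begin{equation*}
\mathcal{E}_N := \Big\{\max_{x\in[-1,1]} |h_N(x)| \leq (\sqrt{2}+\delta')\log N\Big\}.
\end{equation*}
On $\mathcal{E}_N$, the identity \eqref{hN(lambda)} evaluated at $x=\lambda_j$ yields, uniformly for $j=1,\dots,N$,
\begin{equation*}
|F(\lambda_j) - F(\kappa_j)| \leq (1+\delta'/\sqrt{2})\,\frac{\log N}{\pi N} + O(N^{-1}).
\end{equation*}

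To pass from this $F$-scale bound to the desired bound on $F'(\kappa_j)|\lambda_j-\kappa_j|$, I would set $y := F(\lambda_j)$ and $y_j := F(\kappa_j)=(j-1/2)/N$, and apply the mean value theorem to $F^{-1}$:
\begin{equation*}
\lambda_j - \kappa_j = \pi\sin(\pi\tilde y)\,(y-y_j)
\end{equation*}
for some $\tilde y$ between $y$ and $y_j$. Since $F'(\kappa_j) = 1/(\pi\sin(\pi y_j))$, this gives the clean identity
\begin{equation*}
F'(\kappa_j)|\lambda_j-\kappa_j| = \frac{\sin(\pi\tilde y)}{\sin(\pi y_j)}\,|y-y_j|.
\end{equation*}

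The main step is then to show that the ratio $\sin(\pi\tilde y)/\sin(\pi y_j)$ tends to $1$ uniformly for $j\in[N^\delta, N-N^\delta]$. In this range, $\min(y_j, 1-y_j) \gtrsim N^{\delta-1}$, while $|\tilde y - y_j| \leq |y-y_j| = O(\log N/N)$; hence both $|\tilde y/y_j - 1|$ and $|(1-\tilde y)/(1-y_j) - 1|$ are $O(N^{-\delta}\log N) = o(1)$. Combined with the near-linearity of $\sin(\pi t)$ at $t=0$ and $t=1$, and its being bounded below on any compact interval of $(0,1)$, this yields $\sin(\pi\tilde y)/\sin(\pi y_j) = 1+o(1)$ uniformly. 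Choosing $\delta'=\delta'(\varepsilon)$ so that $(1+o(1))(1+\delta'/\sqrt{2}) \leq 1+\varepsilon$ for large $N$ completes the proof.

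The main (though modest) obstacle is ensuring uniformity near the ``soft boundary'' $j\approx N^\delta$: one must verify that the deterministic tolerance $|y-y_j|=O(\log N/N)$ is genuinely of smaller order than the classical scale $y_j$ itself, so that the $\sin$-ratio can be linearized. This is precisely why the restriction $j\in[N^\delta, N-N^\delta]$ enters the statement and, more broadly, why reaching the hard edges requires the iterative refinement scheme foreshadowed in Remark~\ref{remark for main theorem}, rather than a single appeal to Theorem~\ref{main theorem for hN}.
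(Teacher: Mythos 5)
Your proof is correct and takes a genuinely different, and in fact shorter, route than the paper's. The paper starts from the same high-probability bound $|F(\lambda_j)-F(\kappa_j)|=O(\log N/N)$, but then Taylor-expands $F$ about $\kappa_j$ to \emph{second} order and must show the remainder $\tfrac{1}{2}F''(\zeta_j)(\lambda_j-\kappa_j)^2$ is negligible. Since $F''\asymp(1-x^2)^{-3/2}$ blows up near $\pm1$ and the only a priori bound available is $|\lambda_j-\kappa_j|\lesssim\log N/N$ (from $F'\geq1/\pi$), this works only for $j\gtrsim N^{2/3+\delta}$; the paper then bootstraps the improved bound down to $j\gtrsim N^{4/9+\delta}$, then $N^{8/27+\delta}$, and so on, introducing the events $\mathcal{B}_{N,m}$ with $l_N^{(m)}=N^{(2/3)^m+\delta}$. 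Your argument replaces this two-term Taylor analysis by a one-step mean value theorem applied to the \emph{inverse} $F^{-1}(\eta)=-\cos(\pi\eta)$, which yields the exact identity $F'(\kappa_j)|\lambda_j-\kappa_j|=\frac{\sin(\pi\tilde y)}{\sin(\pi y_j)}|y-y_j|$. Since $(F^{-1})'=\pi\sin(\pi\,\cdot\,)$ is bounded and Lipschitz, and $|\tilde y-y_j|\lesssim\log N/N$ is of strictly smaller order than $y_j\wedge(1-y_j)\gtrsim N^{\delta-1}$ throughout $j\in[N^\delta,N-N^\delta]$, the ratio is $1+O(N^{-\delta}\log N)=1+o(1)$ uniformly, with no bootstrapping. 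The reason the inverse map works so cleanly is that $(F^{-1})'$ \emph{vanishes} (rather than blows up) at the endpoints, so the linearization error on the $\sin$-ratio never degrades as $j$ approaches the boundary of the range; the iteration in the paper is spent recovering by hand what $F^{-1}$ packages automatically. One small correction to your closing sentence: the iterative refinement foreshadowed in Remark~\ref{remark for main theorem} is used by the paper \emph{inside} the proof of this proposition (to descend from $N^{2/3+\delta}$ to $N^\delta$), not to reach the hard edge; the range $j<N^\delta$ is handled separately in Proposition~\ref{main proposition for edge rigidity} via the sharper edge moment estimates of Lemmas~\ref{hN near edge first lemma} and~\ref{hN near edge second lemma}. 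Your argument renders that internal iteration unnecessary.
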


\begin{proof}
From \eqref{hN(lambda)} and Proposition \ref{prop: upper bound of hN}, it is obvious that 
\begin{equation} \label{F(kappa)-F(lambda)}
\lim_{N\to\infty}\mathbb{P} \left( |F(\kappa_j)-F(\lambda_j)| \leq \frac{(1+\varepsilon)\log N}{\pi N} \textrm{ for all } j \right) = 1 .
\end{equation}
Define the event
\begin{equation} \label{eq:event-Bn0}
\mathcal{B}_{N, 0} = \left\{|\lambda_j-\kappa_j|\leq \frac{(1+\varepsilon)\log N}{N} \text{ for all } j\right\}.
\end{equation}
Since $F'(x)\geq \frac{1}{\pi}$ for all $x\in [-1,1]$, by mean-value theorem and \eqref{F(kappa)-F(lambda)}, we have
\begin{equation}
\lim_{N\to\infty}\mathbb{P} (\mathcal{B}_{N, 0}) = 1.
\end{equation}
Next, we introduce $l_N = N^{2/3+\delta}$, where $\delta>0$ can be arbitrarily small, and define another event
\begin{equation} \label{eq:event-Bn1}
\mathcal{B}_{N, 1} =\left\{ F'(\kappa_j)|\lambda_j-\kappa_j|\leq \frac{(1+\varepsilon)\log N}{\pi N} \text{ for } j=l_N, \cdots, N-l_N \right\} \bigcap \mathcal{B}_{N, 0}.
\end{equation}
We claim that 
\begin{equation}
\lim_{N\to\infty}\mathbb{P} (\mathcal{B}_{N, 1}) = 1.
\end{equation}
To see this, we recall that
\begin{equation} \label{eq:F'-F""}
F'(x) = \frac{1}{\pi} (1-x^2)^{-1/2}, \quad F''(x) = -\frac{1}{\pi} x (1-x^2)^{-3/2}.
\end{equation}
When $\zeta_j$ lies between $\lambda_j$ and $\kappa_j$, under the condition $\mathcal{B}_{N, 0}$, we have
\begin{eqnarray}
\sqrt{2}\pi N|F''(\zeta_j)|(\lambda_j-\kappa_j)^2 &\leq& C N (l_N^2 N^{-2})^{-3/2}(\log N)^2 N^{-2} \leq C l_N^{-3} (\log N)^2 N^2,
\end{eqnarray}
for some constant $C>0$, which is of order $o(1)$ by our choice of $l_N$. Therefore, from \eqref{hN(lambda)}, we get
\begin{eqnarray}\label{Taylor expansion of hN}
|h_N(\lambda_j)| &=& \left|\sqrt{2}\pi N F'(\kappa_j)(\lambda_j-\kappa_j) + \sqrt{2}\pi N \frac{F''(\zeta_j)}{2}(\lambda_j-\kappa_j)^2 + \frac{\sqrt{2}}{2}\pi\right| \nonumber\\
&\geq & \sqrt{2}\pi N F'(\kappa_j)|\lambda_j-\kappa_j| - \sqrt{2}\pi N \frac{|F''(\zeta_j)|}{2}(\lambda_j-\kappa_j)^2 - \frac{\sqrt{2}}{2}\pi,
\end{eqnarray}
Using \eqref{eq:hn-upperbound}, we obtain from the above two formulas that $\lim_{N\to\infty}\mathbb{P} (\mathcal{B}_{N, 1}) = 1$. 

This approximation provides us with the upper bound in \eqref{eq:main-theorem} for a region slightly larger than the bulk. We now extend this result towards the endpoints $\pm 1$ as much as possible. By symmetry, we will focus on the left endpoint $-1$, corresponding to $j\leq l_N$. The results for $j\geq N-l_N$ can be derived similarly. Furthermore, if $\lambda_j \leq \kappa_j$, we have
\begin{equation} \label{eq 1: only need one side}
|F(\kappa_j) - F(\lambda_j)| = |F'(\xi_j)|(\kappa_j - \lambda_j), \qquad \xi_j\in (\lambda_j, \kappa_j).
\end{equation}
Since $F'(x)>0$ is monotonically decreasing near $-1$ (cf. \eqref{eq:F'-F""}), it follows that
\begin{equation}\label{eq 2: only need one side}
F'(\kappa_j) (\kappa_j - \lambda_j) \leq F'(\xi_j)(\kappa_j - \lambda_j). 
\end{equation}
The desired bound \eqref{eq: prop5.7-formula} then follows directly from \eqref{F(kappa)-F(lambda)} and the above inequality. Thus,  we only need to consider the case $\lambda_j>\kappa_j$ for the remainder of the proof.

Let us first consider $j = l_N$. From \eqref{eq: kappa-j-def}, we know $\kappa_j = -\cos \frac{(j-1/2)\pi}{N}$. Conditioning on the event $\mathcal{B}_{N, 1}$ from \eqref{eq:event-Bn1} at $j=l_N$, we have
\begin{eqnarray}
\lambda_{l_N} - \kappa_{l_N} \leq \frac{1}{F'(\kappa_{l_N})} \cdot \frac{(1+\varepsilon)\log N}{\pi N} = \pi \sin \frac{(l_N-1/2)\pi}{N} \frac{(1+\varepsilon)\log N}{\pi N},
\end{eqnarray}
which is of order $O(\frac{\log N}{N^{4/3 - \delta}})$ based on our choice of $l_N$. Next, for $j\leq l_N$, since there exists $\xi_j \in (\kappa_j, \lambda_j)$ such that $F'(\xi_j)(\lambda_j - \kappa_j) = F(\kappa_j) - F(\lambda_j)$, we get
\begin{multline}
\xi_j < \lambda_j \leq \lambda_{l_N} \leq  \kappa_{l_N} + \frac{C\log N}{N^{4/3-\delta}} =-\cos \frac{(l_N-1/2)\pi}{N} + \frac{C\log N}{N^{4/3-\delta}} \\
= -1 + C' N^{-2/3+2\delta} + O(N^{4\delta- 4/3}) \qquad \text{ as } N\to\infty,
\end{multline}
Then, we have
\begin{equation}
F'(\xi_j) = \frac{1}{\pi} (1-\xi_j^2)^{-1/2} \geq 
C' N^{1/3 - \delta} .
\end{equation}
Then, conditioned on the event $\mathcal{B}_{N, 1}$, we obtain
\begin{equation}\label{lambda-kappa}
\lambda_j - \kappa_j \leq C(F'(\xi_j))^{-1}\frac{\log N}{ N} \leq \frac{C' \log N}{N^{4/3-\delta}} \qquad \textrm{for } j\leq l_N,
\end{equation}
which improves the initial bound from \eqref{eq:event-Bn0} when $j \leq l_N$.

We now iterate this refinement. Set $l^{(1)}_N = N^{4/9+\delta}$ and define
\begin{equation} \label{eq:event-Bn2}
\mathcal{B}_{N, 2} = \left\{F'(\kappa_j)|\lambda_j-\kappa_j| \leq \frac{(1+\varepsilon)\log N}{\pi N} \text{ for } j= l^{(1)}_N, \cdots, l_N.\right\} \bigcap \mathcal{B}_{N, 1}.
\end{equation}
By \eqref{eq:F'-F""} and the fact that $\zeta_j$ is between $\kappa_j$ and $\lambda_j$, we have
\begin{equation}
|F''(\zeta_j)| \leq C \left((l_N^{(1)})^2N^{-2}\right)^{-3/2} \qquad \textrm{for } j\geq l_N^{(1)}.
\end{equation}
Using this bound together with \eqref{lambda-kappa}, we get, for some constant $C>0$,
\begin{eqnarray}
\sqrt{2}\pi N |F''(\zeta_j)|(\lambda_j - \kappa_j)^2 &\leq & C N ((l^{(1)}_N)^{2} N^{-2})^{-3/2} (\log N)^{2} N^{-8/3+2\delta} \nonumber\\
&\leq& C (l^{(1)}_N)^{-3}(\log N)^{2} N^{4/3+2\delta}. 
\end{eqnarray}
Since $l^{(1)}_N = N^{4/9+\delta}$, the right-hand side is of order $o(1)$ as $N \to \infty$. Combining this estimate with \eqref{lambda-kappa}, \eqref{Taylor expansion of hN} and the upper bound of $h_N(x)$ in \eqref{eq:hn-upperbound}, we obtain $\lim_{N\to\infty} \mathbb{P}(\mathcal{B}_{N, 2}) = 1$.

By repeating this procedure $m-1$ times, we arrive at
\begin{equation} \label{eq:event-BnM-limit}
\lim_{N\to\infty}\mathbb{P} (\mathcal{B}_{N, m}) = 1,
\end{equation}
where 
\begin{equation}
\mathcal{B}_{N, m+1} = \left\{F'(\kappa_j)|\lambda_j-\kappa_j| \leq \frac{(1+\varepsilon)\log N}{\pi N} \text{ for } j= l^{(m)}_N, \cdots, l^{(m-1)}_N.\right\} \bigcap \mathcal{B}_{N, m},
\end{equation}
and $l^{(m)}_N = N^{(\frac{2}{3})^m + \delta}$. Since $\delta > 0$ can be chosen arbitrarily small and $(2/3)^m \to 0$, the iteration implies that \eqref{eq: prop5.7-formula} holds for $N^\delta \leq j \leq N-l_N$. Similarly, we can show that the result holds for  $ N-l_N \leq j \leq N-N^\delta$. This concludes the proof of the proposition.
\end{proof}

\subsubsection{Edge Rigidity: Refinement of the Bound of $h_N(x)$}

In this section, we establish the upper bound \eqref{eq:main-theorem} near the edge. As a preliminary step, we require the following two lemmas concerning the bounds of the eigenvalue counting function $h_N(x)$.

\begin{lemma} \label{hN near edge first lemma}
Let $h_N(x)$ be the eigenvalue counting function  defined in \eqref{hN}. Then, for any constant $C>0$, there exists a sufficiently small $\delta>0$ such that
\begin{equation}
\lim_{N\to\infty} \mathbb{P} [\max_{x\leq \kappa_{2N^{\delta}}} h_N(x) \geq C\log N] = 0.
\end{equation}
By symmetry, similar estimates hold for $-h_N(x)$.
\end{lemma}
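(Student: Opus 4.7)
The plan is to reduce the continuous maximum over $[-1, \kappa_{2N^\delta}]$ to a discrete maximum over the quantiles $\kappa_1, \ldots, \kappa_{\lceil 2N^\delta\rceil+1}$, and then apply Markov's inequality together with the exponential moment bounds already established in the proof of Proposition~\ref{Assumption 3.1}.

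First, I would observe that for any $x \in (\kappa_j, \kappa_{j+1})$ (with the convention $\kappa_0 = -1$), the monotonicity of the counting function together with $F(\kappa_{j+1}) - F(\kappa_j) = 1/N$ gives
\begin{equation*}
h_N(x) = \sqrt{2}\pi\Big(\sum_k \mathbf{1}_{\lambda_k\leq x} - NF(x)\Big) \leq \sqrt{2}\pi\Big(\sum_k \mathbf{1}_{\lambda_k\leq \kappa_{j+1}} - NF(\kappa_j)\Big) = h_N(\kappa_{j+1}) + \sqrt{2}\pi.
\end{equation*}
Consequently,
\begin{equation*}
\max_{x\leq \kappa_{2N^\delta}} h_N(x) \leq \max_{1\leq j\leq \lceil 2N^\delta\rceil+1} h_N(\kappa_j) + \sqrt{2}\pi,
\end{equation*}
so it suffices to prove that $\mathbb{P}[\max_j h_N(\kappa_j) \geq C\log N - \sqrt{2}\pi] \to 0$.

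Next, I would apply Markov's inequality with parameter $a>0$ and split the quantiles into two regimes. For $\kappa_j$ in the \emph{bulk} regime, i.e.\ $\kappa_j \in [-1 + N^{-2}\log\log N,\, \kappa_{2N^\delta}]$, the bound \eqref{hN moment bulk} gives $\mathbb{E}[e^{a h_N(\kappa_j)}] \leq R_a (NF(\kappa_j))^{a^2/2} \leq R_a (2N^\delta)^{a^2/2}$, since $NF(\kappa_j) = j - 1/2 \leq 2N^\delta$. For $\kappa_j$ in the \emph{edge} regime $[-1,\, -1+N^{-2}\log\log N]$, the estimate \eqref{hN moment edge} yields $\mathbb{E}[e^{a h_N(\kappa_j)}] \leq e^{O((\log\log N)^{1/2})}$, and a direct calculation using $\kappa_j = -\cos\frac{(j-1/2)\pi}{N}$ shows that only $O(\sqrt{\log\log N})$ values of $j$ fall into this edge regime. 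A union bound then produces
\begin{equation*}
\mathbb{P}\Big[\max_j h_N(\kappa_j) \geq C\log N\Big] \leq C' N^{\delta(1 + a^2/2) - aC} + O(\sqrt{\log\log N})\, N^{-aC(1+o(1))}.
\end{equation*}
Optimizing in $a$ by choosing $a = C/\delta$, the bulk exponent becomes $\delta - C^2/(2\delta)$, which is negative whenever $\delta < C/\sqrt{2}$. Hence, for any fixed $C>0$, selecting $\delta$ sufficiently small (depending on $C$) forces both terms to vanish as $N\to\infty$.

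The main technical subtlety lies in the handling of quantiles within $O(N^{-2})$ of $-1$, where the bulk moment bound \eqref{hN moment bulk} degenerates and must be replaced by the sharper estimate \eqref{hN moment edge} obtained via the deterministic domination $h_N(x) \leq h_N(-1+N^{-2}\log\log N) + \sqrt{2}\pi N F(-1+N^{-2}\log\log N)$. Fortunately, since only $O(\sqrt{\log\log N})$ quantiles lie in this edge regime, their contribution to the union bound is dominated by any polynomial rate in $N$, and the bulk contribution drives the final estimate. The symmetric statement for $-h_N$ is proved identically using the corresponding bounds for $-h_N$ noted in Proposition~\ref{Assumption 3.1}.
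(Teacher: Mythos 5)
Your proof is correct and follows essentially the same approach as the paper: reduce the continuous maximum to the quantiles $\kappa_j$ via the deterministic bound $h_N(x)\leq h_N(\kappa_{j+1})+\sqrt{2}\pi$, apply Markov's inequality, split the sum at $t_N=-1+N^{-2}\log\log N$, and invoke the moment estimates \eqref{hN moment bulk} and \eqref{hN moment edge} from Proposition~\ref{Assumption 3.1}. The only cosmetic difference is that the paper fixes $\gamma=\sqrt{2}$ and sums the exact $NF(\kappa_j)=j-1/2$, whereas you replace $NF(\kappa_j)$ by the uniform bound $2N^\delta$ and then optimize the Chernoff parameter $a=C/\delta$; both yield the same admissible range $\delta<C/\sqrt{2}$.
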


\begin{proof}
We start with \eqref{hN bound}, which implies
\begin{eqnarray}
\mathbb{P} \left(\max_{x\leq \kappa_{2N^{\delta}}} h_N(x) \geq C\log N \right) &\leq& \mathbb{P} \left(\max_{j\leq 2N^\delta} h_N(\kappa_j) \geq C\log N - \sqrt{2}\pi\right) \nonumber \\
&\leq& \sum_{j\leq 2N^{\delta}} \mathbb{P} \left(h_N(\kappa_j)\geq C\log N -\sqrt{2}\pi\right).
\end{eqnarray}
By Markov's inequality, we have, for a constant $\gamma > 0$,
\begin{equation}
\mathbb{P} \left(\max_{x\leq \kappa_{2N^{\delta}}} h_N(x) \geq C\log N \right)
\leq  \sum_{j\leq 2N^{\delta}} C' \frac{\mathbb{E} [e^{\gamma h_N(\kappa_j)}]}{N^{\gamma C}}.
\end{equation}
We split the above sum into two parts based on the location of $\kappa_j$. Define the threshold $t_N = -1 + N^{-2} \log\log N$, and let
\begin{align*}
\mathcal{P}_1 = \sum_{j : \, t_N \leq \kappa_j < \kappa_{2N^\delta} } C' \frac{\mathbb{E} e^{\gamma h_N(\kappa_j)}}{N^{\gamma C}}, \qquad 
\mathcal{P}_2 = \sum_{j : \, \kappa_j < t_N} C' \frac{\mathbb{E} e^{\gamma h_N(\kappa_j)}}{N^{\gamma C}}.
\end{align*}
We now bound $\mathcal{P}_1$ and $\mathcal{P}_2$ separately. For $\mathcal{P}_1$, using \eqref{hN moment bulk}, we have
\begin{eqnarray}
\mathcal{P}_1 \leq  \sum_{j : \, t_N \leq \kappa_j < \kappa_{2N^\delta} }  C' \frac{(N F(\kappa_j))^{\gamma^2 / 2}}{N^{\gamma C}}.
\end{eqnarray}
By letting $\gamma = \sqrt{2}$, we obtain
\begin{eqnarray}
\mathcal{P}_1 \leq \sum_{j\leq 2N^{\delta}} C' \frac{j-1/2}{N^{\sqrt{2} C}} \leq C' N^{2\delta - \sqrt{2}C}.
\end{eqnarray}
Hence, by choosing $0 < \delta < \frac{\sqrt{2}C}{2}$, we have $\mathcal{P}_1 \to 0$ as $N\to\infty$. 

For $\mathcal{P}_2$, using \eqref{hN moment edge}, we have
\begin{eqnarray}
\mathcal{P}_2 \leq \sum_{j : \, \kappa_j < t_N} C' \frac{e^{\sqrt{2}\pi\gamma C (\log\log N)^{1/2}} R_{\gamma} (C(\log\log N)^{1/2})^{\gamma^2 / 2}}{N^{\gamma C}}.
\end{eqnarray}
Since the number of such indices $j$ is of the order $ O\left( N^{-1} (\log\log N)^{1/2} \right)$, we have 
\begin{eqnarray}
\mathcal{P}_2 \leq  C' N^{-1}(\log\log N)^{1/2} \frac{e^{\sqrt{2}\pi\gamma C (\log\log N)^{1/2}} R_{\gamma} (C(\log\log N)^{1/2})^{\gamma^2 / 2}}{N^{\gamma C}},
\end{eqnarray}
which tends to $0$ as $N\to\infty$. This concludes the proof of the lemma.
\end{proof}

When $x$ is closer to the edge $-1$, the approximation in the above lemma can be further improved.

\begin{lemma} \label{hN near edge second lemma}
Let $h_N(x)$ be the eigenvalue counting function  defined in \eqref{hN}. Then, for any constant $C>0$, we have
\begin{equation}
\lim_{N\to\infty} \mathbb{P} \left[\max_{x\leq \kappa_{2\log N}} h_N(x) \geq C (\log N)^{1/2}\right] = 0.
\end{equation}
By symmetry, similar estimates hold for $-h_N(x)$.
\end{lemma}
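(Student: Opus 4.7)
\medskip

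\noindent\textbf{Proof proposal.} The plan is to mimic the argument of Lemma~\ref{hN near edge first lemma}, adapting the parameters to the sharper threshold. First, using the discrete bound \eqref{hN bound} together with the union bound, I would write
\begin{equation*}
\mathbb{P}\!\left[\max_{x\leq \kappa_{2\log N}} h_N(x) \geq C(\log N)^{1/2}\right]
\leq \sum_{j\leq 2\log N} \mathbb{P}\!\left[h_N(\kappa_j) \geq C(\log N)^{1/2} - \sqrt{2}\pi\right],
\end{equation*}
and then apply Markov's inequality with a free parameter $\gamma>0$ to bound each term by $C' \, \mathbb{E}[e^{\gamma h_N(\kappa_j)}] \, e^{-\gamma C(\log N)^{1/2}}$. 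As in the previous lemma, I would split the sum according to whether $\kappa_j \geq t_N := -1 + N^{-2}\log\log N$ (call this contribution $\mathcal{P}_1$) or $\kappa_j < t_N$ (contribution $\mathcal{P}_2$).

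For $\mathcal{P}_1$, the bulk moment bound \eqref{hN moment bulk} together with $NF(\kappa_j) = j - 1/2$ yields $\mathbb{E}[e^{\gamma h_N(\kappa_j)}] \leq R_\gamma (j-1/2)^{\gamma^2/2}$. Crudely estimating $(j-1/2)^{\gamma^2/2} \leq (2\log N)^{\gamma^2/2}$ and noting the sum has at most $2\log N$ terms, I obtain
\begin{equation*}
\mathcal{P}_1 \leq C'' (\log N)^{1+\gamma^2/2} \, e^{-\gamma C (\log N)^{1/2}},
\end{equation*}
which tends to $0$ because $e^{-\gamma C\sqrt{\log N}}$ dominates any polynomial in $\log N$ (substitute $y = \sqrt{\log N}$ to see this explicitly). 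For $\mathcal{P}_2$, the edge moment bound \eqref{hN moment edge} and the fact that $F(t_N) \leq CN^{-1}(\log\log N)^{1/2}$ imply that at most $O((\log\log N)^{1/2})$ indices $j$ contribute, each with
\begin{equation*}
\mathbb{E}[e^{\gamma h_N(\kappa_j)}] \leq e^{\sqrt{2}\pi\gamma C (\log\log N)^{1/2}} R_\gamma \bigl(C(\log\log N)^{1/2}\bigr)^{\gamma^2/2}.
\end{equation*}
Dividing by $e^{\gamma C(\log N)^{1/2}}$ and noting $\sqrt{\log\log N} = o(\sqrt{\log N})$, the quotient vanishes as $N\to\infty$.

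The argument is almost a routine repetition of the previous lemma, so there is no serious obstacle; the only delicate point to keep an eye on is that the dominant decay factor $e^{-\gamma C \sqrt{\log N}}$ is merely stretched-exponential rather than polynomial, so one must verify that it still beats both the polylogarithmic prefactor in $\mathcal{P}_1$ and the subexponential blow-up $e^{\sqrt{2}\pi\gamma C (\log\log N)^{1/2}}$ in $\mathcal{P}_2$. Both comparisons hold for any fixed $\gamma>0$, so in fact no optimization of $\gamma$ is needed; one can simply take $\gamma = 1$ throughout. The same reasoning applied to $-h_N(x)$ (whose exponential moments satisfy the identical bounds by Proposition~\ref{Assumption 3.1}) gives the symmetric statement.
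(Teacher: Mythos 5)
Your proof is correct and follows the same strategy the paper indicates (the paper itself omits the details and refers back to Lemma~\ref{hN near edge first lemma}): the union bound over $j \leq 2\log N$, Markov's inequality, and the split at $t_N = -1 + N^{-2}\log\log N$ using the two moment bounds \eqref{hN moment bulk} and \eqref{hN moment edge}. Your observation that the stretched-exponential decay $e^{-\gamma C\sqrt{\log N}}$ dominates both the polylogarithmic prefactor and the $e^{O(\sqrt{\log\log N})}$ blow-up for any fixed $\gamma>0$ (so no optimization over $\gamma$ is needed, unlike in the previous lemma where $\gamma=\sqrt{2}$ was chosen) is a nice and accurate remark.
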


\begin{proof} 
The proof follows a similar approach to that of the above lemma. From \eqref{hN bound}, we have
\begin{eqnarray}
\mathbb{P} \left(\max_{x\leq 2 \log N} h_N(x) \geq C (\log N)^{1/2} \right) 
\leq \sum_{j\leq 2\log N} \mathbb{P} \left(h_N(\kappa_j)\geq C (\log N)^{1/2} -\sqrt{2}\pi\right).
\end{eqnarray}
By Markov's inequality, we have, for a constant $\gamma > 0$,
\begin{multline}
\mathbb{P} \left(\max_{x\leq \kappa_{2\log N}} h_N(x) \geq C (\log N)^{1/2} \right)
\leq  \sum_{j\leq 2\log N} C' \frac{\mathbb{E} e^{\gamma h_N(\kappa_j)}}{e^{\gamma C (\log N)^{1/2}}} \\
=\sum_{j: \, t_N \leq \kappa_j \leq \kappa_{2\log N}} C' \frac{\mathbb{E} e^{\gamma h_N(\kappa_j)}}{e^{\gamma C (\log N)^{1/2}}} + \sum_{j : \, \kappa_j \leq t_N} C' \frac{\mathbb{E} e^{\gamma h_N(\kappa_j)}}{e^{\gamma C (\log N)^{1/2}}}
\end{multline}
with $t_N=-1+N^{-2}\log\log N$. The rest of the proof is similar to that of Lemma \ref{hN near edge first lemma}, and we omit the details here.
\end{proof}

Using the two lemmas above, we now prove the following proposition concerning eigenvalue rigidity near the edge.

\begin{proposition} \label{main proposition for edge rigidity}
For $F(x)$ defined in \eqref{distribution of eq measure}, there exists $\delta > 0$ such that
\begin{equation} \label{eq: main proposition for edge rigidity}
\lim_{N\to\infty} \mathbb{P} \left(F'(\kappa_j)|\lambda_j-\kappa_j| \leq \frac{(1+\varepsilon)\log N}{\pi N} \text{ for } 1\leq j \leq N^\delta \right) = 1.
\end{equation}
\end{proposition}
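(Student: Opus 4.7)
The approach is to split into two cases based on the sign of $\lambda_j - \kappa_j$, handling the easy case directly and the hard case by an iterative refinement that transfers control from the bulk toward the hard edge, using Lemmas~\ref{hN near edge first lemma} and \ref{hN near edge second lemma}.

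Case A, when $\lambda_j \leq \kappa_j$, is immediate: since $F'$ is monotonically decreasing on $(-1, 0)$, any mean-value point $\xi_j \in (\lambda_j, \kappa_j)$ satisfies $F'(\xi_j) \geq F'(\kappa_j)$, so $F'(\kappa_j)|\lambda_j - \kappa_j| \leq F(\kappa_j) - F(\lambda_j)$. Combining with the identity~\eqref{hN(lambda)} and the upper bound $|h_N(\lambda_j)| \leq (1+\varepsilon)\sqrt{2}\log N$ (from Proposition~\ref{prop: upper bound of hN} and its symmetric analogue for $-h_N$) yields the claim directly in this case.

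Case B, when $\lambda_j > \kappa_j$, is delicate because now $F'(\xi_j) \leq F'(\kappa_j)$ for $\xi_j \in (\kappa_j, \lambda_j)$, so a bound on $F(\lambda_j) - F(\kappa_j)$ controls only the weaker quantity $F'(\lambda_j)|\lambda_j - \kappa_j|$. The plan is to bootstrap rigidity from the bulk inward. Proposition~\ref{Main theorem: Eigenvalue Rigidity on the Bulk} gives $|\lambda_{j^*} - \kappa_{j^*}| = O(\log N \cdot j^*/N^2)$ at $j^* = \lfloor N^{\delta_0} \rfloor$ for some small $\delta_0>0$, hence $\lambda_{j^*} \leq -1 + O(N^{2\delta_0 - 2})$; by monotonicity of the ordered eigenvalues, $\lambda_j \leq \lambda_{j^*}$ for every $j \leq j^*$, which localises $\lambda_j$ close to $-1$ and therefore provides a lower bound on $F'(\lambda_j)$. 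Lemma~\ref{hN near edge first lemma} then supplies $|h_N(\lambda_j)| \leq C \log N$ on the region $\lambda_j \leq \kappa_{2N^{\delta}}$, with $C$ made small by shrinking $\delta$, and identity~\eqref{hN(lambda)} converts this to $|F(\lambda_j) - F(\kappa_j)| = O(\log N/N)$. Dividing by the lower bound for $F'(\xi_j) \geq F'(\lambda_j)$ produces a sharper bound on $\lambda_j - \kappa_j$, which in turn tightens the localisation of $\lambda_j$ near $-1$ and hence the ratio $F'(\kappa_j)/F'(\lambda_j)$. Iterating this bootstrap, and switching to the finer estimate $|h_N| \leq C\sqrt{\log N}$ from Lemma~\ref{hN near edge second lemma} once $j \leq c\log N$, should close the gap after a controlled number of rounds.

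The main obstacle is that $F'(\kappa_j)/F'(\lambda_j)$ is initially of order $N^{\delta_0}/j$, which is large for $j$ close to $1$; moreover, any attempt to use a Taylor-remainder argument as in Proposition~\ref{Main theorem: Eigenvalue Rigidity on the Bulk} is hampered by $|F''(\xi_j)| \sim (1-\xi_j^2)^{-3/2}$ blowing up at the edge. The iteration must therefore proceed geometrically, contracting $1+\lambda_j$ toward $1+\kappa_j$ at each step by a $\log N$-type factor, so that after $O(\log\log N)$ rounds the ratio $F'(\kappa_j)/F'(\lambda_j)$ is brought down to $1+o(1)$ uniformly for $1 \leq j \leq N^{\delta}$. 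Maintaining uniform control of the multiplicative constants throughout this iteration, and carefully matching the range of valid $j$ at each round against the precision available from Lemmas~\ref{hN near edge first lemma} and \ref{hN near edge second lemma}, is the crux of the argument.
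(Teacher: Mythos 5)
Your overall decomposition (the easy monotone case $\lambda_j\leq\kappa_j$ handled via Proposition~\ref{prop: upper bound of hN}, the hard case $\lambda_j>\kappa_j$ handled near the edge via Lemmas~\ref{hN near edge first lemma} and \ref{hN near edge second lemma}) matches the paper's, but your iterative bootstrap is a genuinely different and substantially heavier mechanism than what the paper uses, and as you describe it, it has a gap. The claim that iterating contracts $F'(\kappa_j)/F'(\lambda_j)$ down to $1+o(1)$ uniformly for $1\leq j\leq N^\delta$ is false for $j=O(1)$: any admissible configuration has $\lambda_j-\kappa_j$ of order up to $(\log N)/N^2$ while $1+\kappa_j\asymp j^2/N^2$, so $(1+\lambda_j)/(1+\kappa_j)\asymp \log N/j^2$ can stay as large as $\log N$. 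The fixed point of the bootstrap has $1+\lambda_j\asymp (\log N)/N^2$, not $1+\kappa_j$, and the ratio $F'(\kappa_j)/F'(\xi_j)$ stabilizes at $\sqrt{\log N}/j$, which does not tend to $1$. What actually closes the argument is not a ratio tending to $1$ but the compensating gain from Lemma~\ref{hN near edge second lemma}: $|h_N(\lambda_j)|\lesssim C\sqrt{\log N}$ with $C$ arbitrarily small, so that $\sqrt{\log N}\cdot\sqrt{\log N}/N=\log N/N$ appears with an arbitrarily small constant. Without stating this compensation explicitly, your proposal does not yield the correct $(1+\varepsilon)$ constant.

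The paper takes a much shorter, one-shot route that avoids iteration altogether. Rather than bounding $|\lambda_j-\kappa_j|$ by comparing $F'$ at random points, it fixes the \emph{deterministic} threshold $\kappa_j+\tfrac{j\log N}{4N^2}$, observes that $F'(\kappa_j)\cdot\tfrac{j\log N}{4N^2}\lesssim\tfrac{\log N}{\pi^2 N}$ is already within the target bound, and then shows that the event $\lambda_j>\kappa_j+\tfrac{j\log N}{4N^2}$ forces $h_N$ evaluated at that deterministic point to be $\leq -C\log N$ (when $\log N\leq j\leq N^\delta$, via Lemma~\ref{hN near edge first lemma}) or $\leq -C\sqrt{\log N}$ (when $1\leq j\leq\log N$, via Lemma~\ref{hN near edge second lemma}), each of which has vanishing probability. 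Testing $h_N$ at a pre-chosen point, rather than at $\lambda_j$ itself, is exactly what eliminates the need to localise $\lambda_j$ iteratively, and it handles the blow-up of $F''$ near the hard edge automatically because only the first-order mean-value estimate on the interval $[\kappa_j,\kappa_j+\tfrac{j\log N}{4N^2}]$ is ever used. If you want to keep the bootstrap, it can likely be repaired by making the $\sqrt{\log N}$-versus-$\sqrt{\log N}$ compensation explicit, but the paper's direct contradiction argument is simpler and is the one to emulate.
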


\begin{proof}
Recalling the discussion in \eqref{eq 1: only need one side} and \eqref{eq 2: only need one side}, we only need to consider the case $\lambda_j>\kappa_j$. To establish the bound in \eqref{eq: main proposition for edge rigidity}, we claim that it suffices to show there exists a $\delta>0$ such that
\begin{equation}\label{edge: Prob: lambda-kappa}
\lim_{N\to\infty} \mathbb{P} \left(\lambda_j-\kappa_j\leq \frac{k\log N}{4N^2} \quad\textrm{ for } j\leq N^\delta\right) = 1.
\end{equation}
To see this, note that
\begin{equation}
F'(\kappa_j) = \frac{1}{\pi} (1-\kappa_j^2)^{-1/2} = \frac{1}{\pi \sin \frac{(j-1/2)\pi}{N}} \sim \frac{N}{(j-1/2)\pi^2}
\end{equation}
as $N\to\infty$. Then, for arbitrarily small $\varepsilon>0$, we have
\begin{equation}\label{Edge estimation for F prime}
F'(\kappa_j) \leq \frac{(1+\varepsilon)N}{(j-1/2)\pi^2}
\end{equation}
for sufficiently large $N$. Combining this with \eqref{edge: Prob: lambda-kappa}, we obtain \eqref{eq: main proposition for edge rigidity}. Therefore, in the remaining part of the proof, we will focus on demonstrating \eqref{edge: Prob: lambda-kappa}. Based on the preceding two lemmas, we break this proof into two steps.

\textbf{Step 1: Consider the regime where $\log N\leq j\leq N^\delta$.} Assume that there exists $j$ such that $\lambda_j - \kappa_j > \frac{j\log N}{4N^2}$. From the definition of $h_N(x)$ in \eqref{hN}, we have
\begin{equation}\label{same}
\frac{1}{\sqrt{2}\pi N} h_N\left(\kappa_j + \frac{j\log N}{4N^2}\right) \leq F(\kappa_j) - F\left(\kappa_j+\frac{j\log N}{4N^2}\right)
\end{equation}
for sufficiently large $N$. Recall that
\begin{equation}\label{same2}
\kappa_j + \frac{j\log N}{4N^2} = -1+\frac{(j-1/2)^2\pi^2}{2N^2}+\frac{j\log N}{4N^2} + O(j^4/N^4)
\end{equation}
as $N\to\infty$. Since $\log N \leq j \leq N^\delta$, the term of order $j^2/N^2$ dominates the expansion. Therefore, by the mean value theorem and the behavior of $F(x)$ near $-1$, there exists a constant $C > 0$ such that
\begin{equation} \label{same3}
F\left(\kappa_j+\frac{j\log N}{4N^2}\right) - F(\kappa_j) \geq C\cdot \frac{N}{j}\cdot \frac{j\log N}{4N^2} = C\frac{\log N}{4N}.
\end{equation}
Substituting this into \eqref{same} yields
\begin{equation}\label{star}
\frac{1}{\sqrt{2}\pi N} h_N\left(\kappa_j + \frac{j\log N}{N^2}\right) \leq -C\frac{\log N}{N}.
\end{equation}
As $\kappa_j = -\cos \frac{(j-1/2) \pi}{N}$, we have $
\kappa_j+\frac{j\log N}{N^2} \leq \kappa_{2j}\leq \kappa_{2N^\delta},
$
 which implies that $\min_{x\leq \kappa_{2N^\delta}} h_N(x) \leq -C\log N$ for some $C>0$. By Lemma \ref{hN near edge first lemma}, the probability of this event tends to $0$. Hence, we can conclude that for sufficiently small $\delta>0$, the inequality \eqref{edge: Prob: lambda-kappa} holds for  $\log N\leq j\leq N^\delta$.

\textbf{Step 2: Consider the regime where $1\leq j\leq \log N$.} Again, assume that there exists $j$ such that $\lambda_j - \kappa_j > \frac{j\log N}{4N^2}$. The formulas \eqref{same} and \eqref{same2} still hold. However, since $1\leq j\leq \log N$, the term $\frac{j\log N}{4N^2}$ in expansion \eqref{same2} dominates over $\frac{j^2}{N^2}$. Consequently, the approximation in \eqref{same3} is modified to
\begin{equation}
F\left(\kappa_j+\frac{j\log N}{4N^2}\right) - F(\kappa_j) \geq C\cdot \frac{2N}{\sqrt{j\log N}}\cdot \frac{j\log N}{4N^2} = C\frac{\sqrt{j\log N}}{2N}.
\end{equation}
Substituting this result into \eqref{same} yields
\begin{equation}\label{star2}
\frac{1}{\sqrt{2}\pi N} h_N\left(\kappa_j + \frac{j\log N}{4N^2}\right) \leq -C\frac{\sqrt{j\log N}}{4N} \leq -C\frac{\sqrt{\log N}}{4N}.
\end{equation}
From \eqref{same2}, we have $
\kappa_j + \frac{j\log N}{4N^2} \leq \kappa_{2\log N}
$
for sufficiently large $N$. Therefore, inequality \eqref{star2} implies that $\min_{x\leq \kappa_{2\log N}} h_N(x) \leq -C\sqrt{\log N}$. By Lemma \ref{hN near edge second lemma}, the probability of this event tends to $0$ as $N \to \infty$. Therefore, we conclude that \eqref{edge: Prob: lambda-kappa} holds for $1\leq j\leq \log N$.

\smallskip

Combining the above two steps, we finalize the proof of \eqref{edge: Prob: lambda-kappa} and, simultaneously, Proposition \ref{main proposition for edge rigidity}.
\end{proof}

Since the lower bound in \eqref{eq:main-theorem} is provided by \eqref{eq: lower-bound-approx} and Proposition \ref{lower bound of hN}, and the upper bound follows from Propositions \ref{Main theorem: Eigenvalue Rigidity on the Bulk} and \ref{main proposition for edge rigidity}, the proof of Theorem \ref{main theorem for hN} is complete.

\appendix

\section{Several Model RH problems}

\subsection{Bessel Parametrix} \label{section: RHP for Bessel}

The Bessel parametrix $\Phi_{\text{Bes}}^{(\alpha)}(z)$ with $\alpha$ being a parameter is a solution to the following RH problem.

   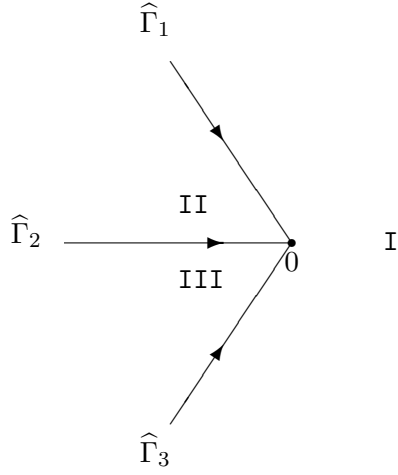
\begin{figure}[t]
\begin{center}
   \setlength{\unitlength}{1truemm}
   \begin{picture}(80,70)(-5,2)
       \put(40,40){\line(-2,-3){16}}
       \put(40,40){\line(-2,3){16}}
       \put(40,40){\line(-1,0){30}}

       \put(30,55){\thicklines\vector(2,-3){1}}
       \put(30,40){\thicklines\vector(1,0){1}}
       \put(30,25){\thicklines\vector(2,3){1}}

       \put(39,36.3){$0$}
       \put(20,11){$\widehat \Gamma_3$}
       \put(20,68){$\widehat \Gamma_1$}
       \put(3,40){$\widehat  \Gamma_2$}

       \put(52,39){$\texttt{I}$}
       \put(25,44){$\texttt{II}$}
       \put(25,34){$\texttt{III}$}

       \put(40,40){\thicklines\circle*{1}}

   \end{picture}
   \caption{The jump contours and regions for the RH problem for $\Phi_{\text{Bes}}^{(\alpha)}$.}
   \label{fig:jumps-Phi-B}
\end{center}
\end{figure}

\begin{rhp}\label{Appendix: Model RHP for Bessel}
\hfill

\begin{itemize}
\item[(a)] $\Phi_{\text{Bes}}^{(\alpha)}(z)$ is defined and analytic in $\mathbb{C}\setminus \{\cup^3_{j=1}\widehat \Gamma_j\cup\{0\}\}$, where the contours $\widehat \Gamma_j$, $j=1,2,3$,  are indicated  in Figure \ref{fig:jumps-Phi-B}.

\item[(b)] $\Phi_{\text{Bes}}^{(\alpha)}(z)$ satisfies the jump condition
\begin{equation} \label{Bessel-jump}
 \Phi_{\text{Bes}, +}^{(\alpha)}(z)=\Phi_{\text{Bes}, -}^{(\alpha)}(z)
 \left\{
 \begin{array}{ll}
   \begin{pmatrix}
                                1 & 0\\
                               e^{\alpha \pi i} & 1
                                \end{pmatrix},  &  \qquad z \in \widehat \Gamma_1, \\
   \begin{pmatrix}
                                0 & 1\\
                               -1 & 0
                                \end{pmatrix},  &  \qquad z \in \widehat \Gamma_2, \\
   \begin{pmatrix}
                                 1 & 0 \\
                                 e^{-\alpha\pi i} & 1 \\
                                \end{pmatrix}, &   \qquad z \in \widehat \Gamma_3.
 \end{array}  \right .
 \end{equation}

\item[(c)] $\Phi_{\text{Bes}}^{(\alpha)}(z)$ satisfies the following asymptotic behavior at infinity:
\begin{multline}\label{eq:Besl-infty}
 \Phi_{\text{Bes}}^{(\alpha)}(z)=
 \frac{( \pi^2 z )^{-\frac{1}{4} \sigma_3}}{\sqrt{2}}
 \begin{pmatrix}
 1 & i
 \\
 i & 1
 \end{pmatrix}
 \\
 \times  \left( I + \frac{1}{8 z^{1/2}}  \begin{pmatrix}
   -1-4\alpha^2 & -2i \\
   -2i & 1+4\alpha^2
 \end{pmatrix} +  O\left(\frac{1}{z}\right)
 \right)e^{z^{1/2}\sigma_3},\quad z\to \infty.
   \end{multline}

\item[(d)] $\Phi_{\text{Bes}}^{(\alpha)}(z)$ satisfies the  following asymptotic behaviors near the origin:
\newline
    If $\alpha<0$,
    \begin{equation}
\Phi_{\text{Bes}}^{(\alpha)}(z)=
O \begin{pmatrix}
|z|^{\alpha/2} & |z|^{\alpha/2}
\\
|z|^{\alpha/2} & |z|^{\alpha/2}
\end{pmatrix}, \qquad \textrm{as $z \to 0$}.
\end{equation}
If $\alpha=0$,
    \begin{equation}
\Phi_{\text{Bes}}^{(\alpha)}(z)=
O \begin{pmatrix}
\ln|z| & \ln|z|
\\
\ln|z| & \ln|z|
\end{pmatrix}, \qquad \textrm{as $z \to 0$}.
\end{equation}
If $\alpha>0$,
     \begin{equation}
\Phi_{\text{Bes}}^{(\alpha)}(z)= \left\{
                             \begin{array}{ll}
                                O\begin{pmatrix}
|z|^{\alpha/2} & |z|^{-\alpha/2}
\\
|z|^{\alpha/2} & |z|^{-\alpha/2}
\end{pmatrix}, & \hbox{as $z \to 0$ and $z\in \texttt{I}$,}
\\
 O \begin{pmatrix}
|z|^{-\alpha/2} & |z|^{-\alpha/2}
\\
|z|^{-\alpha/2} & |z|^{-\alpha/2}
\end{pmatrix}, & \hbox{as $z \to 0$ and $z\in \texttt{II}\cup \texttt{III} $.}
                             \end{array}
                           \right.
\end{equation}
\end{itemize}
\end{rhp}

From \cite{ABJ2004}, it follows that the above RH problem can be solved explicitly by defining
\begin{equation}\label{Phi-B-solution}
\Phi_{\text{Bes}}^{(\alpha)}(z)=\left\{
                             \begin{array}{ll}
                               \begin{pmatrix}
I_{\alpha}(z^{1/2}) & \frac{i}{\pi}K_{\alpha}(z^{1/2}) \\
\pi iz^{1/2}I'_{\alpha}(z^{1/2}) &
-z^{1/2}K_{\alpha}'(z^{1/2})
\end{pmatrix}, & z\in \texttt{I},\\ [4mm]
                              \begin{pmatrix}
I_{\alpha}(z^{1/2}) & \frac{i}{\pi}K_{\alpha}(z^{1/2}) \\
\pi iz^{1/2}I'_{\alpha}(z^{1/2}) &
-z^{1/2}K_{\alpha}'(z^{1/2})
\end{pmatrix}\begin{pmatrix}
                                1 & 0\\
                               -e^{\alpha \pi i} & 1
                                \end{pmatrix}, & z\in \texttt{II}, \\ [4mm]
                                \begin{pmatrix}
I_{\alpha}(z^{1/2}) & \frac{i}{\pi}K_{\alpha}(z^{1/2}) \\
\pi iz^{1/2}I'_{\alpha}(z^{1/2}) &
-z^{1/2}K_{\alpha}'(z^{1/2})
\end{pmatrix}\begin{pmatrix}
1 & 0 \\
e^{-\alpha\pi i} & 1
\end{pmatrix}, &  z\in \texttt{III},
                             \end{array}
                           \right.
\end{equation}
where $I_\alpha(z)$ and $K_\alpha(z)$ denote the modified Bessel functions (cf. \cite[Chapter 10]{DLMF}), the principal branch is taken for $z^{1/2}$ and the regions $\texttt{I-III}$ are illustrated in Fig. \ref{fig:jumps-Phi-B}.

\subsection{Confluent Hypergeometric Parametrix}\label{Appendix: Section: HG model RHP}

The confluent hypergeometric parametrix $\Phi_{\text{HG}}(z)=\Phi_{\text{HG}}(z;\beta)$ with $\beta$ being a parameter is a solution to the following RH problem.

\begin{rhp} \label{Appendix: HG model RHP}
\hfill

\begin{itemize}
  \item[(a)]   $\Phi_{\text{HG}}(z)$ is analytic in $\mathbb{C}\setminus \{\cup^6_{j=1}\widehat\Sigma_j\cup\{0\}\}$, where the contours $\widehat\Sigma_j$, $j=1,\ldots,6,$ are indicated in Figure. \ref{fig:jumps-Phi-C}.

  \begin{figure}[h]
\centering
   \setlength{\unitlength}{1truemm}
   \begin{picture}(100,70)(-5,2)
       \put(40,40){\line(-2,-3){18}}
       \put(40,40){\line(-2,3){18}}
       \put(40,40){\line(-1,0){30}}
       \put(40,40){\line(1,0){30}}
  \put(40,40){\line(2,-3){18}}
    \put(40,40){\line(2,3){18}}

       \put(30,55){\thicklines\vector(2,-3){1}}
       \put(30,40){\thicklines\vector(1,0){1}}
       \put(50,40){\thicklines\vector(1,0){1}}
       \put(30,25){\thicklines\vector(2,3){1}}
      \put(50,25){\thicklines\vector(2,-3){1}}
       \put(50,55){\thicklines\vector(2,3){1}}


       \put(42,36.9){$0$}
         \put(72,40){$\widehat \Sigma_1$}
           \put(60,69){$\widehat \Sigma_2$}
             \put(20,69){$\widehat \Sigma_3$}
              \put(3,40){$\widehat \Sigma_4$}
       \put(18,10){$\widehat \Sigma_5$}
          \put(60,11){$ \widehat \Sigma_6$}

%

       \put(40,40){\thicklines\circle*{1}}
\end{picture}
   \caption{The jump contours for the RH problem for $\Phi_{\text{HG}}$.}
   \label{fig:jumps-Phi-C}

\end{figure}
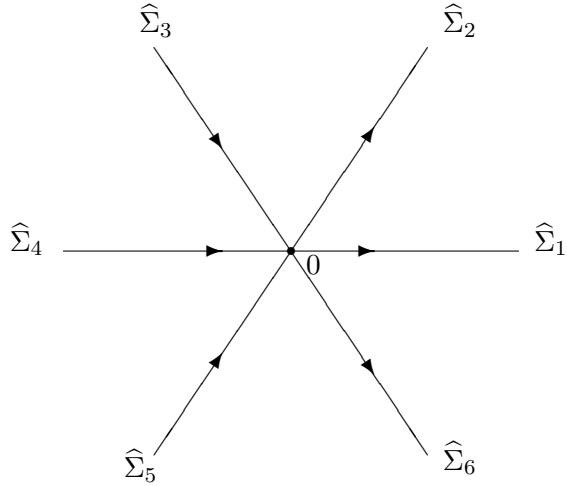

  \item[(b)] $\Phi_{\text{HG}}$ satisfies the following jump condition:
  \begin{equation}\label{HJumps}
  \Phi_{\text{HG}, +}(z)=\Phi_{\text{HG}, -} \widehat J_i(z), \quad z \in \widehat\Sigma_i,\quad j=1,\ldots,6,
  \end{equation}
  where
  \begin{equation*}
 \widehat J_1(z) = \begin{pmatrix}
    0 &   e^{-\beta \pi i} \\
    -  e^{\beta \pi i} &  0
    \end{pmatrix}, \qquad \widehat J_2(z) = \begin{pmatrix}
    1 & 0 \\
    e^{ \beta \pi i } & 1
    \end{pmatrix}, \qquad
    \widehat J_3(z) = \begin{pmatrix}
    1 & 0 \\
    e^{ -\beta\pi i} & 1
    \end{pmatrix},                                                         
  \end{equation*}
  \begin{equation*}
  \widehat J_4(z) = \begin{pmatrix}
    0 &   e^{\beta\pi i} \\
     -  e^{-\beta\pi i} &  0
     \end{pmatrix}, \qquad
      \widehat J_5(z) = \begin{pmatrix}
     1 & 0 \\
     e^{- \beta\pi i} & 1
     \end{pmatrix},\qquad
     \widehat J_6(z) = \begin{pmatrix}
   1 & 0 \\
   e^{\beta\pi i} & 1
   \end{pmatrix}.
  \end{equation*}

  \item[(c)] $\Phi_{\text{HG}}$ satisfies the following asymptotic behavior at infinity:
  \begin{equation} \label{Appendix: model RHP HG asymptotics}
\Phi_{\text{HG}}(z) = e^{-\frac{\pi i}{2}\beta} (I + O(z^{-1}) (z)^{\beta\sigma_3} e^{-i\frac{z}{2}\sigma_3} \tilde{\chi}(z), \qquad z\to\infty
\end{equation}
where $\tilde{\chi}(z)$ is given by
\begin{eqnarray}
\tilde{\chi}(z) = \left\{\begin{array}{ll}
e^{\pi i \beta\sigma_3}, & \arg z\in (0, \pi)\\
\left(
  \begin{array}{cc}
    0 & -1 \\
    1 & 0 \\
  \end{array}
\right), & \arg z \in (\pi, 2\pi).
\end{array}
\right.
\end{eqnarray}
  
\item[(d)] As $z\to 0$, we have $\Phi_{\text{HG}}(z)=\Boh(\log |z|)$.

\end{itemize}

\end{rhp}

From \cite{IK}, it follows that the above RH problem can be solved explicitly in the following way. For $z$ belonging to the region bounded by the rays $\widehat \Sigma_1$ and $\widehat \Sigma_2$,
\begin{equation}\label{Hsolution}
\Phi_{\text{HG}}(z)=C_1\left(\begin{array}{ll}
\psi(\beta,1,e^{\frac{\pi i}{2}}z)e^{2 \beta \pi i}e^{-\frac{iz}{2}}&-\frac{\Gamma(1-\beta)}{\Gamma(\beta)}\psi(1-\beta,1,e^{-\frac{\pi i}{2}}z)e^{\beta \pi i}e^{\frac{iz}{2}}\\
-\frac{\Gamma(1+\beta)}{\Gamma(-\beta)}\psi(1+\beta,1,e^{\frac{\pi i}{2}}z)e^{\beta \pi i}e^{-\frac{iz}{2}}
&\psi(-\beta,1,e^{-\frac{\pi i}{2}}z)e^{\frac{iz}{2}}\end{array}\right),
\end{equation}
where the confluent hypergeometric function $\psi(a,b;z)$ is the unique solution to the Kummer's equation
\begin{equation} \label{Kummer-equation}
z\frac{\ud^2y}{\ud z^2}+(b-z)\frac{\ud y}{\ud z}-ay=0
\end{equation}
satisfying the boundary condition $\psi(a,b,z)\sim  z^{-a}$ as $z\to \infty$ and $-\frac{3\pi }{2} < \arg z < \frac{3\pi}{2}$;
see \cite[Chapter 13]{DLMF}.  The branches of the multi-valued functions are chosen such that $-\frac{\pi}{2}<\arg z<\frac{3\pi}{2}$ and
$$
C_1=
\begin{pmatrix}
e^{-\frac32 \beta \pi i} & 0
\\
0 & e^{\frac12 \beta \pi i}
\end{pmatrix}
$$
is a constant matrix. The explicit formula of $\Phi_{\text{HG}}(z)$ in the other sectors is then determined by using the jump condition \eqref{HJumps}. 

From \cite[Lemma C.1]{CFL2021}, one can show that for $\Re\beta = 0$, taking the limit where $z$ approaches $0$ with $\arg z\in (\pi/4,\pi/2)$, one has
\begin{equation}\label{Appendix: HG specific value}
\lim_{z\to 0} \left(\Phi_{HG}(z)\right)_{1,1} = \Gamma (1-\beta),\qquad \lim_{z\to 0} \left(\Phi_{HG}(z)\right)_{2,1} = \Gamma (1-\beta)
\end{equation}
and
\begin{equation} \label{Appendix: HG Lemma}
\lim_{z\to 0}\left(\Phi_{\text{HG}}^{-1}(z)\frac{d}{dz}\Phi_{\text{HG}}(z)\right)_{2,1}={ -\frac{2\pi  \beta}{e^{\pi i \beta}-e^{-\pi i \beta}}}.
\end{equation}

\subsection{A Modified Painlev\'e V Model RH Problem.} \label{Appendix: Section: PV model RHP}

The modified Painlev\'e V parametrix $\widehat{\Phi}_{PV}(z; s)$ with $s$ being a parameter is a solution to the following RH problem. 

\begin{rhp} \label{Appendix: PV RHP}
\hfill
\begin{itemize}
    \item[(a)] $\widehat{\Phi}_{PV}:\mathbb C\setminus \{ \cup_{j=1}^7 \widehat{\Gamma}_j \cup \{0,1\}\} \to \mathbb C^{2\times 2}$ is analytic, where $\widehat{\Gamma}_j$, $j=1,2,...,7$ are depicted in Figure \ref{fig:hatPsi}.
    \item[(b)] $\widehat{\Phi}_{PV}$ satisfies the jump conditions
    \begin{equation}\label{jump hatPsi}\widehat{\Phi}_{PV, +}(z)=\widehat{\Phi}_{PV, -}(z)\widehat J_k,\qquad
    z\in\widehat\Gamma_k,\end{equation}  where
                \begin{align*}
                &\widehat J_1=\begin{pmatrix}1&0\\e^{-{\sqrt{2}\pi}(\gamma_1+\gamma_2)}&1\end{pmatrix},
                &&\widehat J_2=\begin{pmatrix}1&0\\e^{-{\sqrt{2}\pi}(\gamma_1+\gamma_2)}&1\end{pmatrix},\\
                &\widehat J_3=\begin{pmatrix}1&0\\1&1\end{pmatrix},
                &&\widehat J_4=\begin{pmatrix}1&0\\ 1&1\end{pmatrix},\\
                &\widehat J_5=\begin{pmatrix}1&e^{{\sqrt{2}\pi}\gamma_2}\\0&1\end{pmatrix},
&&\widehat J_6=\begin{pmatrix}0&e^{{\sqrt{2}\pi}\gamma_1+{\sqrt{2}\pi}\gamma_2}\\ -e^{-{\sqrt{2}\pi}\gamma_1-{\sqrt{2}\pi}\gamma_2}&0\end{pmatrix},\\
                &\widehat J_7=\begin{pmatrix}0&1\\-1&0\end{pmatrix},                
                \end{align*}
    \item[(c)] $\Phi_{\text{PV}}$ satisfies the following asymptotic behavior at infinity:
    \begin{equation}\label{Psi ashat}
    \widehat{\Phi}_{PV}(z)=\left(I+\frac{\widehat{\Phi}_1}{z}+\frac{\widehat{\Phi}_2}{z^2}+O(z^{-3})\right)
    \widehat{\Phi}^\infty(z)e^{\pm \frac{s}{2}z\sigma_3} \qquad  \mbox{ as $z\to \infty$ \text{ and } $\pm \mathrm{Im}z>0$,}
    \end{equation}
    where
    $\widehat{\Phi}^\infty$ is defined in \eqref{hatpsiinfty}.
                \item[(d)] As $z \to x\in\{0,1\}$, we have
                \begin{equation*}
                \widehat{\Phi}_{PV}(z;s)=O(\log(z-x)).
                \end{equation*}
    \end{itemize}

\begin{figure}[t]
\centering
    \setlength{\unitlength}{0.8truemm}
    \begin{picture}(75,55)(5,10)
    \put(45,50){\thicklines\circle*{.8}}
    \put(60,50){\thicklines\circle*{.8}}
    \put(30,50){\thicklines\circle*{.8}}
    \put(55,50){\thicklines\vector(1,0){.0001}}
    \put(40,50){\thicklines\vector(1,0){.0001}}
    \put(15,50){\thicklines\vector(1,0){.0001}}
    \put(75,50){\thicklines\vector(1,0){.0001}}
    \put(0,50){\line(1,0){90}}
    \put(60,50){\line(1,1){25}}
    \put(30,50){\line(-1,1){25}}
    \put(60,50){\line(1,-1){25}}
    \put(30,50){\line(-1,-1){25}}
    \put(75,65){\thicklines\vector(1,1){.0001}}
    \put(15,65){\thicklines\vector(1,-1){.0001}}
    \put(75,35){\thicklines\vector(1,-1){.0001}}
    \put(15,35){\thicklines\vector(1,1){.0001}}

    \put(75,75){\small $\widehat{\Gamma}_4$}
    \put(15,75){\small $\widehat{\Gamma}_1$}
    \put(75,20){\small $\widehat{\Gamma}_3$}
    \put(15,20){\small $\widehat{\Gamma}_2$}
    \put(45,54){\small $\widehat{\Gamma}_5$}
    \put(15,54){\small $\widehat{\Gamma}_6$}
    \put(75,54){\small $\widehat{\Gamma}_7$}

    \put(30,53){\small $0$}
    \put(59,53){\small $1$}

    \end{picture}

    \caption{The jump contour for $\widehat{\Phi}_{PV}$.}
    \label{fig:hatPsi}
\end{figure}
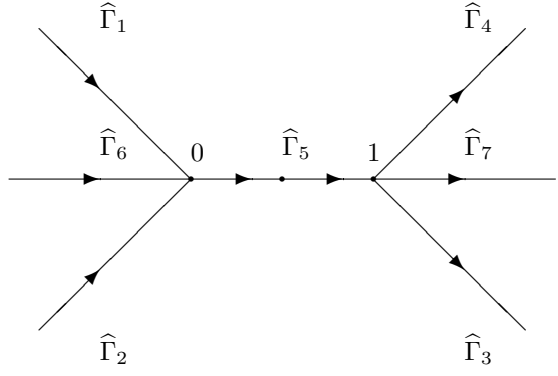
    
\end{rhp}

It is shown in \cite[Lemma 6.1]{CFL2021} that the above RH problem is uniquely solvable when $\gamma_1, \gamma_2 \in \mathbb{R}$ and $s \in -i \mathbb{R}_+$. The solution can be constructed by using the $\psi$-functions associated with the Painlev\'e V equation; see also \cite[Sec. 3]{Cla:Kra2015}.

\section*{Acknowledgements}
Dan Dai was partially supported by grants from the Research Grants Council of the Hong Kong Special Administrative Region, China [Project No. CityU 11311622, CityU 11306723 and CityU 11301924].

\end{document}